\documentclass[11pt,a4paper]{article}
\usepackage{bbm}

\usepackage[leqno]{amsmath}
\usepackage{amsfonts}
\usepackage{graphicx}
\usepackage{amsmath}
\usepackage{amssymb}
\usepackage{latexsym}
\usepackage{amsmath, amsfonts,amssymb, amsthm, euscript,makeidx,color,mathrsfs}
\usepackage{enumerate}

\usepackage[colorlinks,linkcolor=blue,anchorcolor=green,citecolor=red]{hyperref}


\oddsidemargin  = 0pt \evensidemargin = 0pt \marginparwidth = 1in
\marginparsep   = 0pt \leftmargin     = 1.25in \topmargin =0pt
\headheight     = 0pt \headsep        = 1.5em \topskip =0pt
\footskip       =0.35in \textheight   = 9.1in \textwidth =6.5in

\def\sqr#1#2{{\vcenter{\vbox{\hrule height.#2pt
              \hbox{\vrule width.#2pt height#1pt \kern#1pt \vrule width.#2pt}
              \hrule height.#2pt}}}}
%

%


\def\mf{\mathcal{F}}
\def\me{\mathbb{E}}
\def\bal{\begin{aligned}}
\def\eal{\end{aligned}}

\def\5n{\negthinspace \negthinspace \negthinspace \negthinspace \negthinspace }
\def\4n{\negthinspace \negthinspace \negthinspace \negthinspace }
\def\3n{\negthinspace \negthinspace \negthinspace }
\def\2n{\negthinspace \negthinspace }
\def\1n{\negthinspace }

\def\dbE{\mathbb{E}}
\def\dbF{\mathbb{F}}

\def\dbN{\mathbb{N}}

\def\dbR{\mathbb{R}}


\def\={\buildrel \triangle \over =}

\def\ds{\displaystyle}

%
%
\def\a{\alpha}

\def\g{\gamma}

\def\e{\varepsilon}

\def\si{\sigma}
\def\t{\tau}
\def\f{\varphi}
\def\th{\theta}
\def\o{\omega}

%
%

\def\D{\Delta}

\def\O{\Omega}
%
%

%
%

%

%
\def\no{\noindent}

\def\ms{\medskip}

\def\q{\quad}
\def\qq{\qquad}

%
%


\def\lan{\langle}
\def\ran{\rangle}


\def\cd{\cdot}

\def\({\Big (}
\def\){\Big )}
\def\[{\Big[}
\def\]{\Big]}

\def\bde{\begin{definition}\label}
\def\ede{\end{definition}}
\def\be{\begin{equation}}
\def\bel{\begin{equation}\label}
\def\ee{\end{equation}}
\def\bt{\begin{theorem}\label}
\def\et{\end{theorem}}
\def\bc{\begin{corollary}\label}
\def\ec{\end{corollary}}
\def\bl{\begin{lemma}\label}
\def\el{\end{lemma}}
\def\bp{\begin{proposition}\label}
\def\ep{\end{proposition}}
\def\bas{\begin{assumption}\label}
\def\eas{\end{assumption}}
\def\br{\begin{remark}\label}
\def\er{\end{remark}}
\def\bex{\begin{example}\label}
\def\ex{\end{example}}
\def\ba{\begin{array}}
\def\ea{\end{array}}
\def\ed{\end{document}}

\def\square#1{\vbox{\hrule\hbox{\vrule height#1%
     \kern#1\vrule}\hrule}}
\def\rectangle#1#2{\vbox{\hrule\hbox{\vrule height#1%
     \kern#2\vrule}\hrule}}


\font\tenbb=msbm10 \font\sevenbb=msbm7 \font\fivebb=msbm5

\newfam\bbfam
\scriptscriptfont\bbfam=\fivebb \textfont\bbfam=\tenbb
\scriptfont\bbfam=\sevenbb

\newtheorem{theorem}{\hskip 1.3em Theorem}[section]
\newtheorem{definition}[theorem]{\hskip 1.3em Definition}
\newtheorem{proposition}[theorem]{\hskip 1.3em Proposition}
\newtheorem{corollary}[theorem]{\hskip 1.3em Corollary}
\newtheorem{lemma}[theorem]{\hskip 1.3em Lemma}
\newtheorem{remark}[theorem]{\hskip 1.3em Remark}
\newtheorem{example}[theorem]{\hskip 1.3em Example}

\newtheorem{assumption}[theorem]{\hskip 1.3em Assumption}

\makeatletter
   
   \@addtoreset{equation}{section}
\makeatother

\begin{document}

\title{\bf A Numerical Scheme for BSVIEs\thanks{This work is supported in part by
the National Natural Science Foundation of China (11526167), the Fundamental Research Funds for the Central
Universities (SWU113038, XDJK2014C076),  the Natural Science
Foundation of CQCSTC (2015jcyjA00017).}}

\author{Yanqing Wang\footnote{School of Mathematics and Statistics, Southwest University, Chongqing 400715, China; email: {\tt yqwang@amss.ac.cn}}}

\maketitle

\begin{abstract}
In this paper, we consider the Euler method for backward stochastic Volterra integral equations.
First, we approximate the original equation by a family of backward stochastic equations (BSDEs, for short).
 Then we solve the BSDEs by the Euler method. Finally, by virtue of the numerical solutions to BSDEs, we get the numerical
 solution to original equation and obtain the global $1/2$ order convergence speed in $L^2$ norm. 
\end{abstract}

\ms

\no\bf Keywords: \rm Backward stochastic Volterra integral equation, the Euler method, backward stochastic 
differential equation, Malliavin analysis.

\ms

\no\bf AMS subject classification: \rm 60H20, 65C30

\maketitle

\section{Introduction}

Throughout this paper, we
let $T\in(0,+\infty)$, $(\O,\mf,\mathbb{F},P)$ be a complete probability space and
$\mathbb{F}=\{\mf_t,t\in [0,T]\}$ be the natural filtration generalized by a 1-dimensional Wiener process  $\{W(t):t\in [0,T]\}$ satisfying the usual conditions.
The purpose of this work is to present a numerical scheme for solving the following backward stochastic Volterra integral equation (BSVIE, for short):
\begin{equation}\label{bsvie1}
Y(t)=g(t,x(T))+\int_t^T f(t,s,x(s),Y(s),Z(t,s))ds-\int_t^T Z(t,s)dW(s),\,\,t\in [0,T],
\end{equation}
where $\ds f: \D^c\times \dbR^d \times \dbR^n\times \dbR^n\rightarrow \dbR^n$, $\ds g: [0,T]\times \dbR^d\rightarrow \dbR^n$ are given maps with $\ds \D^c=\{(t,s)\in [0,T]^2:t<s\}$, and 
 $x(\cdot)$ satisfies the following stochastic Volterra integral equation (SVIE, for short):
\begin{equation}\label{svie}
x(t)=\varphi(t)+\int_0^t b(t,s,x(s))ds+\int_0^t \si(t, s,x(s))dW(s),\,\,t\in [0,T].
\end{equation}
Here $\ds \varphi: [0,T]\times \O\rightarrow \dbR^d,\, b,\,\si: [0,T]^2\times \dbR^d\rightarrow \dbR^d. $ 

\ms

BSVIEs are natural and nontrivial extensions of  backward stochastic differential equations 
(BSDEs, for short), and  the general  BSVIEs can not be reduced to BSDEs (see \cite{Yong08}).
The main feature of SVIEs/BSVIEs is that these equations contain memories, which is closer to reality.
We refer to \cite{Berger-Mizel80}, \cite{Ito79} for the pioneering work on SVIEs. 
Nonlinear BSVIEs was 
first introduced in 2002 (\cite{Lin02}). Later, Yong (\cite{Yong08}) studied the well-posedness of 
solutions to generalized BSVIEs. Thereafter,
BSVIEs turned out to be an extremely useful tool in the study of stochastic control problems 
for SVIEs, time-inconsistent stochastic
differential utility and risk management (see, e.g., \cite{Chen-Yong07, Yong07}).

\ms

Generally, it is impossible to obtain the true solutions to BSDEs/BSIVEs. Hence, the study of numerical solutions becomes necessary and interesting. In recent period, the study of numerical solutions to stochastic differential
equations (SDEs, for short) becomes an active topic.
So far, the following numerical schemes for BSDEs have been presented: the four step scheme, the Euler
method, the random walk approach, the Wiener chaos expansion method, the finite transposition method and
so on (see, e.g., \cite{Bender-Denk07,Bouchard-Touzi04,Briand-Labart14,Douglas-Ma-Protter96,Ma-Protter02,Milstein-Tretyakov06,Wang-Zhang11,ZhangJF04}). But for BSVIE, the numerical method is quiet limited. Here  we mention \cite{Bender-Pokalyuk13}.
In \cite{Bender-Pokalyuk13}, the numerical method for the following BSVIE is considered:
\begin{equation}\label{bsvie00}
\begin{aligned}
Y(t)=g(t,W)+\int_t^Tf(s,Y(s))ds-\int_t^TZ(t,s)dW(s),\,t\in [0,T],
\end{aligned}
\end{equation}
which is approximated by a family of discrete BSVIEs driven by a binary random walk with solutions $(Y^{(n)},Z^{(n)})$. Under suitable conditions, $Y^{(n)}$ converges weakly to $Y$ in the Skorokhod topology.
That result relies on a representation for BSVIEs by systems of quasilinear PDEs of parabolic type.
 
\ms

In this paper, we employ the Euler method to present the numerical solution to BSVIE \eqref{bsvie1}. 
To be specific, suppose a partition  $\pi:0=t_0<t_1<\cdots<t_N=T$ of $[0,T]$ with the mesh
 size $\displaystyle|\pi|=\max_{0\leq i\leq N} |t_{i+1}-t_i|$. Then we denote $\D_i=t_{i+1}-t_i$ and $\D_i
 W=W(t_{i+1})-W(t_i)$, for $i = 0, 1, \cdots, N-1$.  
 
 \ms
 
 For  $0\leq k\leq N-1$, we present the Euler method for BSVIE \eqref{bsvie1} as follows:
\begin{equation}\label{ass01}
\left\{
\begin{aligned}
&Y^{k,\pi}(t_N)=g(t_k,x^\pi(T)),\\
&Y^{k,\pi}(t_l)=\me\Big(Y^{k,\pi}(t_{l+1})
        +f(t_k,t_l,x^\pi(t_l),Y^{l,\pi}(t_{l+1}),Z^{k,\pi}(t_{l}))\D_l\big|\mf_{t_l}\Big),\\
&Z^{k,\pi}(t_l)=\me\Big(\frac{\D_lW}{\D_l}\big(Y^{k,\pi}(t_{l+1})
               +f(t_k,t_l,x^\pi(t_l),Y^{l,\pi}(t_{l+1}),Z^{k,\pi}(t_{l}))\big)\big|\mf_{t_l}\Big),\\
               &\qq\qq\qq\qq\qq\qq\qq\qq\qq\qq\, k\leq l\leq N-1.\\
\end{aligned}
\right.
\end{equation}
Here $x^\pi(\cdot)$ is the numerical solution to SVIE \eqref{svie} stated as 
\begin{equation}\label{svieeuler0}
\left\{
\begin{aligned}
x^\pi(0)=&x^\pi(t_0)=\varphi(0),\\
x^\pi(t_{i+1})=&\varphi(t_{i+1})+\sum_{k=0}^i \Big[b(t_{i+1},t_k,x^\pi(t_k))\D_k
                      +\sigma(t_{i+1},t_k,x^\pi(t_k))\D_kW\Big], \\
&\qq\qq\qq\qq\qq\qq i=0,1,\cdots, N-1.\\
\end{aligned}
\right.
\end{equation}
Under suitable conditions on $f,\,g,\,\varphi,\,b$ and $\si$ (assumptions (A1)--(A4) below), 
in the cases: (I) $f=f(t,s,x,y)$; (II) $f=f(t,s,x,z)$,
we can prove that (Theorem \ref{convergence})
\begin{equation}\label{conver0}
\begin{aligned}
\max_{0\leq k\leq N}\me|Y(t_k)-Y^{k,\pi}(t_k)|^2
         +\sum_{k=0}^{N-1}\me\int_{t_k}^{t_{k+1}}\int_t^T|Z(t,s)-Z^{k,\pi}(\t(s))|^2ds
\leq K|\pi|,
\end{aligned}
\end{equation}
where $\t(\cdot) $ is a map  on $[0,T)$ defined by $\t(s)=t_i,\, s\in [t_i, t_{i+1}),\,i=0,1,\cdots,N-1$ and $K$ is a constant.

 \ms
 
 The rest of the paper is organized as follows: In Section 2, we review some of the standard results on SDEs and BSDEs,   introduce our general setting and show the well-posedness of SVIE \eqref{svie} and BSVIE \eqref{bsvie1}. In Section 3, we present the Euler method to obtain the numerical solution to SVIE \eqref{svie} and get the convergence speed. 
In Section 4,  we adopt the  Euler method for BSVIE \eqref{bsvie1}, and the convergence and error analysis are also provided. A numerical example is presented in Section 5.

\ms
 
\section{Preliminaries}
Recall that $\dbR^n$ is the $n$-dimensional Euclidean
 space with the standard Euclidean norm $|\cd|$ induced by the
standard Euclidean inner product $\lan\cd\,,\cd\ran$. Hereafter, the
superscript $^\top$ denotes the transpose of a vector or a matrix. 
We
now introduce some spaces: for $p,q\geq 1$,
\begin{itemize}
\item $L^p_{\mf_T}(\O;\dbR^n)$ is the space of all $\mf_T$-measurable random variances $\xi$ valued in $\dbR^n$ such that
$$\|\xi\|_{L^p_{\mf_T}(\O;\dbR^n)}=\big(\dbE|\xi|^p\big)^{1\over p}<\infty.$$
\item $L^p_\dbF(\O;L^q(0,T;\dbR^n))$ is the space of all $\dbF$-progressively measurable processes $\f(\cd)$ valued in $\dbR^n$ such that
$$\|\f(\cd)\|_{L^p_\dbF(\O;L^q(0,T;H))}=\[\dbE\Big(\int_0^T|\f(t)|^qdt\Big)^{p\over q}\]^{1\over p}<\infty.$$
When $p=q$, we write $\ds L^p_\dbF(\O\times(0,T);\dbR^n)$ for simplicity.
\item $\mathbb{L}_a^{1,2}(\dbR^n)$ is the space of all $\dbF$-progressively measurable processes
$u(\cdot)$ valued in $\dbR^n$ satisfying
\begin{description}
\item[(i)] For almost all $t\in[0,T]$, $u(t)\in \mathbb{D}^{1,2}(\dbR^n)$;
\item[(ii)] $\ds \me\Big(\int_0^T |u(t)|^2 dt+\int_0^T\int_0^T|D_{\theta}u(t)|^2d\theta  dt\Big)<\infty.$
\end{description}
\end{itemize}

\ms

The following lemma collects some standard results in SDE and BSDE literature. We only list them.
\begin{lemma}\label{aaaa}
Suppose that $b_0,\si_0:\O\times [0,T]\times \dbR^d\rightarrow \dbR^d$ and 
$f_0:\O\times [0,T]\times \dbR^n\times \dbR^n\rightarrow \dbR^n$ are $\dbF$-adapted random fields, satisfying:\\
(a) they are uniformly Lipschitz continuous with respect to $x\in \dbR^d$, $y\in \dbR^n$ and $z\in \dbR^n$,\\
(b) $b_0(\cdot,0)$, $\si_0(\cdot,0)\in L^2_{\dbF}(\O\times (0,T);\dbR^d)$, $f_0(\cdot,0,0)\in L^2_{\dbF}(\O\times (0,T);\dbR^n)$.\\
For any $x\in \dbR^d$ and $\xi\in L^2_{\mf_T}(\O;\dbR^n)$, $X(\cdot)$ is the solution to the following SDE:
$$X(t)=x+\int_0^t b_0(s,X(s))ds+\int_0^t \si_0(s,X(s))dW(s),\,\,t\in [0,T],$$
and $(Y(\cdot),Z(\cdot))$ solves the BSDE:
$$Y(t)=\xi+\int_t^Tf_0(s,Y(s),Z(s))ds-\int_t^TZ(s)dW(s),\,\,t\in[0,T].$$
Then, for any $p\geq 2$, we have the following estimates:
$$\me\big(\sup_{0\leq t\leq T}|X(t)|^p\big)\leq C\bigg\{|x|^p+\me\Big(\int_0^T|b_0(t,0)|dt\Big)^p+\me\Big(\int_0^T|\si_0(t,0)|^2dt\Big)^{p/2}\bigg\},$$
$$\me\big(\sup_{0\leq t\leq T}|Y(t)|^p\big)+\me\Big(\int_0^T|Z(s)|^2ds\Big)^{p/2}\leq C\bigg\{\me|\xi|^p+\me\Big(\int_0^T|f_0(s,0,0)|ds\Big)^p\bigg\},$$
where $C$ is a constant.
\end{lemma}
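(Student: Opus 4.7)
The plan is to treat the two estimates separately, since the SDE bound is an input to the BSDE bound via a standard comparison-free argument, but both rely on the same two tools: Burkholder–Davis–Gundy (BDG) and Gronwall.

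For the SDE estimate I would first apply the elementary inequality $(a+b+c)^p\les C_p(a^p+b^p+c^p)$ directly to the integral representation of $X(\cd)$, take the supremum over $[0,t]$ and then expectation. The drift term is handled by Hölder's inequality together with the Lipschitz assumption (a), which yields $\me\sup_{s\les t}\bigl|\int_0^s b_0(r,X(r))dr\bigr|^p\les C\me\bigl(\int_0^T|b_0(r,0)|dr\bigr)^p+C\int_0^t\me\sup_{r\les u}|X(r)|^p\,du$. The diffusion term is handled by BDG, which bounds $\me\sup_{s\les t}\bigl|\int_0^s\si_0(r,X(r))dW(r)\bigr|^p$ by a constant times $\me\bigl(\int_0^t|\si_0(r,X(r))|^2dr\bigr)^{p/2}$; splitting $\si_0(r,X(r))$ into $\si_0(r,0)$ plus a Lipschitz remainder and applying Hölder once more yields a bound of the same shape (a constant in terms of the data plus $C\int_0^t\me\sup_{r\les u}|X(r)|^p\,du$). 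A single application of Gronwall's lemma then finishes the SDE estimate.

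For the BSDE estimate I would start with Itô's formula applied to $|Y(t)|^p$. This gives an identity of the form
\[
|Y(t)|^p+\tfrac{p(p-1)}{2}\int_t^T|Y(s)|^{p-2}|Z(s)|^2ds
=|\xi|^p+p\int_t^T|Y(s)|^{p-2}\lan Y(s),f_0(s,Y(s),Z(s))\ran ds-p\int_t^T|Y(s)|^{p-2}\lan Y(s),Z(s)dW(s)\ran.
\]
Using the Lipschitz hypothesis to write $|f_0(s,Y,Z)|\les |f_0(s,0,0)|+L(|Y|+|Z|)$ and Young's inequality, the $|Y|^{p-2}|Y||Z|$ cross term can be absorbed into the left-hand side quadratic variation piece (this is where one needs $p\ges 2$, ensuring $|Y|^{p-2}$ is well-defined and the coefficient $p(p-1)/2$ leaves room for absorption). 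The remaining $|Y|^{p-2}|Y|\cd|f_0(s,0,0)|$ and $|Y|^{p-2}|Y|^2$ terms are handled by Young/Hölder, producing an inequality of the shape $\me|Y(t)|^p+c\me\int_t^T|Y(s)|^{p-2}|Z(s)|^2ds\les \me|\xi|^p+\me\bigl(\int_0^T|f_0(s,0,0)|ds\bigr)^p+C\int_t^T\me|Y(s)|^p ds$. A Gronwall step in the backward variable $t$ then bounds $\sup_t\me|Y(t)|^p$ and, as a byproduct, $\me\int_0^T|Y(s)|^{p-2}|Z(s)|^2ds$; the $L^p$ bound on $\int|Z|^2$ follows by replaying the same Itô identity once more and isolating $\me\bigl(\int_0^T|Z|^2ds\bigr)^{p/2}$ via Young (using BDG on the martingale term to pass to expectations).

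The one subtle step is upgrading the pointwise bound $\sup_t\me|Y(t)|^p$ to $\me\sup_t|Y(t)|^p$, since $Y$ is not explicitly given as a forward integral. The standard route is to take the conditional expectation representation $Y(t)=\me\bigl(\xi+\int_t^Tf_0(s,Y(s),Z(s))ds\big|\mf_t\bigr)$, apply Doob's maximal inequality (valid for $p\ges 2$) to the martingale $\me\bigl(\xi+\int_0^Tf_0(s,Y(s),Z(s))ds\big|\mf_t\bigr)$, and then use the bounds already obtained on $Y$ and $Z$ to estimate the integrand. This is the main technical obstacle; once it is cleared, assembling the final constant $C$ (depending only on $T$, $p$ and the Lipschitz constants) is routine.
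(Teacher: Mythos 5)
The paper itself gives no proof of this lemma: it is introduced with ``We only list them'' and is quoted as a collection of standard SDE/BSDE estimates, so there is no in-paper argument to compare against. Your sketch reconstructs the standard textbook proof, and the SDE half (splitting off $b_0(\cdot,0)$, $\si_0(\cdot,0)$, H\"older, BDG, Gronwall) is correct as written; in the BSDE half the toolkit (It\^o for $|Y|^p$, Young absorption of the $|Y|^{p-1}|Z|$ term, BDG, and the conditional-expectation/Doob device for the running supremum) is also the right one. (A cosmetic point: the coefficient $\frac{p(p-1)}2|Y|^{p-2}|Z|^2$ is the one-dimensional form; in $\dbR^n$ the quadratic-variation contribution is $\frac p2|Y|^{p-2}|Z|^2+\frac{p(p-2)}2|Y|^{p-4}\lan Y,Z\ran^2$, but only the lower bound $\frac p2|Y|^{p-2}|Z|^2$ is needed, so nothing breaks.)

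Two steps of the BSDE argument, however, do not deliver the estimate in the form stated. First, treating the driver term $p|Y|^{p-1}|f_0(s,0,0)|$ by pointwise Young in $s$ produces $\me\int_0^T|f_0(s,0,0)|^p\,ds$, which does \emph{not} imply the stated bound with $\me\big(\int_0^T|f_0(s,0,0)|\,ds\big)^p$ (H\"older goes the wrong way: the $L^1$-in-time form is the stronger one). To obtain it you must factor out the supremum, $\int_t^T|Y(s)|^{p-1}|f_0(s,0,0)|\,ds\les \sup_s|Y(s)|^{p-1}\int_0^T|f_0(s,0,0)|\,ds\les \e\sup_s|Y(s)|^p+C_\e\big(\int_0^T|f_0(s,0,0)|ds\big)^p$, which forces you to estimate $\me\sup_t|Y(t)|^p$ and $\me\big(\int_0^T|Z|^2ds\big)^{p/2}$ \emph{simultaneously}, not after a Gronwall bound on $\sup_t\me|Y(t)|^p$ alone. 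Second, your ordering is circular: the $Z$-estimate you describe (It\^o, then BDG on the martingale term) inevitably produces a term of the type $C\,\me\big[\sup_t|Y(t)|^{p/2}\big(\int_0^T|Z|^2ds\big)^{p/4}\big]$, i.e.\ it already requires control of $\me\sup_t|Y(t)|^p$, while your Doob step that is supposed to give $\me\sup_t|Y(t)|^p$ requires the $Z$-bound through $\me\big(\int_0^T|Z|ds\big)^p$. The standard repair is to run the two estimates together and close the system by Young's inequality with a small parameter (or, alternatively, to prove the bound on a short terminal interval $[T-\d,T]$ with $\d$ depending only on $L$ and $p$, and iterate backwards over finitely many such intervals). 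With these adjustments your approach does yield the lemma with a constant depending only on $p$, $L$ and $T$; as written, the sequential scheme does not close and does not produce the $\me\big(\int_0^T|f_0(s,0,0)|ds\big)^p$ term.
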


Throughout the paper, we will make use of  the following assumptions.\\
\textbf{(A1)}  $\ds f: [0,T]^2\times \dbR^d\times \dbR^n\times \dbR^n \rightarrow \dbR^n$, and there exists a constant $L$ such that
\begin{equation}\label{f1}
\ba{cc}
&\ds |f(t_1,s_1,x,y,z)-f(t_2,s_2,x, y,z)|
\leq L(|t_1-t_2|^{1/2}+|s_1-s_2|^{1/2}),\\
&\ds \qq\qq\qq\qq\qq\qq\,\,s_1,s_2\in (\max\{t_1,t_2\},T],\,x\in \dbR^d,\,y,\,z\in \dbR^n,\\
&\ds |f(\cdot,\cdot,0,0,0)|\leq L,\\
\ea
\end{equation}
and $f$ has continuous and uniformly bounded first and second partial derivatives with 
respect to $x$, $y$ and $z$ (boundary is $L$). \\
\textbf{(A2)} $\ds g: [0,T]\times \dbR^d \rightarrow \dbR^n$, and there exists a constant $L$ such that
\begin{equation}\label{g1}
\ba{c}
\ds |g(t_1,x)-g(t_2,x)|\leq L|t_1-t_2|^{1/2},\,\,t_1,t_2\in [0,T],\,\,x\in\dbR^d,\\
\ds |g(\cdot,0)|\leq L,
\ea
\end{equation}
and $g$ has continuous and uniformly bounded first and second partial derivatives 
with respect to $x$ (boundary is L).\\
\textbf{(A3)}  $\ds b,\sigma: [0,T]^2\times \dbR^d \rightarrow \dbR^d$, and there exists a constant $L$ such that\begin{equation}\label{b1}
\ba{cc}
&\ds |b(t_1,s_1,x)-b(t_2,s_2,x)|+|\sigma(t_1,s_1,x)-\sigma(t_2,s_2,x)|
\leq
     L(|t_1-t_2|^{1/2}+|s_1-s_2|^{1/2}), \\
     &\ds \qq\qq\qq\qq\qq\,\,t_1,\,t_2,\,s_1,\,s_2\in [0,T],\,x\in \dbR^d,\\
&\ds  |b(\cdot,\cdot,0)|+|\sigma(\cdot,\cdot,0)|\leq L,\\
\ea
\end{equation}
and $b,\,\si$ has continuous and uniformly bounded first and second partial derivatives with 
respect to $x$ (boundary is $L$). \\
\textbf{(A4)} $\varphi(\cdot)$ is $\dbF$-adapted continuous process and there exists a constant $p_0>2$ and $L$ such that
\begin{equation}\label{varphi1}
\ba{c}
\ds \me|\varphi(t)-\varphi(s)|^2\leq L|t-s|,\,\,t,s\in [0,T],\\
\ds \me|D_{\th_1}\varphi(t)-D_{\th_2}\varphi(t)|^2\leq L|\th_1-\th_2|,\, 0\leq \th_1,\th_2\leq t\leq T,\\
\ds \sup_{0\leq\th_1,\th_2\leq t\leq T}\me\[|\varphi(t)|^{2p_0}+|D_{\th_1}\varphi(t)|^{2p_0}
        +|D_{\th_1}D_{\th_2}\varphi(t)|^{p_0}\]\leq L^{2p_0}.
\ea
\end{equation}

In what follows, $K$ and $C$ are positive constants, depending 
only on $L$ and $T$, and may be different from line to line.

\subsection {Regularity of $x(\cdot)$}
In this part, we review the wellposedness of SVIE \eqref{svie}.
Under assumptions {\rm(A3)--(A4)}, the wellposedness of SVIEs can be proved by a routine successive approximation argument (\cite{Ito79}). 
The following properties on $x(\cdot)$ are need later.

\begin{lemma}\label{sviex}
Under assumptions {\rm(A3)--(A4)}, for any  $0\leq t_0\leq t\leq T$, $0\leq \th_1,\th_2\leq t$, it holds that
\begin{equation}\label{regularityx}
\ba{c}
\ds \sup_{0\leq\th_1,\th_2\leq t\leq T}\me\Big(|x(t)|^{2p_0}+|D_{\th_1}x(t)|^{2p_0}+|D_{\th_1}D_{\th_2}x(t)|^{p_0}\Big)<K,\\
\ds \me|x(t)-x(t_0)|^2\leq K(t-t_0),\\
\ds \sup_{\th_1,\th_2\leq t\leq T}\me|D_{\th_1}x(t)-D_{\th_2}x(t)|^2\leq K|\th_1-\th_2|,
\ea
\end{equation}
where $K$ is a constant depending only on $p_0$, $L$ and $T$.
\end{lemma}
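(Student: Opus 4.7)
The plan is to prove the three estimates in turn via Malliavin calculus applied to \rf{svie}, combined with the Burkholder--Davis--Gundy (BDG) inequality and Gronwall's lemma at each level. All three are controlled by linear stochastic Volterra equations and yield to the same recipe: write out the SVIE, take $L^p$-moments, isolate the linear-in-the-unknown terms, and close via Gronwall in $t$.

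For the first estimate, the $2p_0$-moment bound on $x(t)$ itself follows by applying BDG, Hölder, and Gronwall in $t$ to \rf{svie}, using (A3) and the $2p_0$-moment bound on $\varphi$ from (A4). The first Malliavin derivative satisfies the linear SVIE
\begin{equation*}
D_\th x(t) = D_\th \varphi(t) + \si(t,\th,x(\th)) + \int_\th^t b_x(t,s,x(s))D_\th x(s)\,ds + \int_\th^t \si_x(t,s,x(s))D_\th x(s)\,dW(s)
\end{equation*}
for $0\le \th \le t\le T$; the $2p_0$-moment bound on $D_\th x(t)$ then follows from the boundedness of $b_x,\si_x$ in (A3), the bound on $D_\th\varphi$ in (A4), and the moment bound on $x$ just established. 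Taking $D_{\th_1}$ once more (WLOG $\th_1\le \th_2\le t$) produces a similar linear SVIE for $D_{\th_1}D_{\th_2}x(t)$ whose inhomogeneous part contains $D_{\th_1}D_{\th_2}\varphi(t)$, the boundary term $\si_x(t,\th_2,x(\th_2))D_{\th_1}x(\th_2)$, and cross terms of the form $b_{xx}(t,s,x(s))D_{\th_1}x(s)D_{\th_2}x(s)$ (similarly with $\si_{xx}$). A Cauchy--Schwarz step converts the $2p_0$-moment bounds on the $D_{\th_i}x$ into $p_0$-moment bounds on these products, after which BDG and Gronwall finish the argument.

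For the time-regularity estimate, write
\begin{equation*}
x(t)-x(t_0) = \big[\varphi(t)-\varphi(t_0)\big] + \int_0^{t_0}\big[b(t,s,x(s))-b(t_0,s,x(s))\big]ds + \int_{t_0}^t b(t,s,x(s))ds + (\text{analogous }dW\text{ terms}),
\end{equation*}
then apply the Hölder-in-$t$ regularity from (A3), the linear growth of $b,\si$ combined with the moment bound on $x$, and BDG on the small-interval pieces; the term $\me|\varphi(t)-\varphi(t_0)|^2$ is controlled directly by (A4). For the Malliavin $\th$-regularity, take $\th_1<\th_2\le t$ and subtract the SVIEs for $D_{\th_1}x$ and $D_{\th_2}x$: the term $D_{\th_1}\varphi(t)-D_{\th_2}\varphi(t)$ is controlled by (A4); the boundary difference $\si(t,\th_1,x(\th_1))-\si(t,\th_2,x(\th_2))$ is controlled by (A3) together with the already-proved $\me|x(\th_1)-x(\th_2)|^2\le K|\th_1-\th_2|$; and the ``jump'' integrals $\int_{\th_1}^{\th_2}[\cdots]ds$ and $\int_{\th_1}^{\th_2}[\cdots]dW(s)$ are each $O(|\th_1-\th_2|)$ in $L^2$ by BDG and the moment bounds on $D_\th x$. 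A final Gronwall application closes the estimate.

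The main technical obstacle is taking Malliavin derivatives of the Volterra-type stochastic integral carefully, since the boundary contribution $\si(t,\th,x(\th))$ emerges and the lower limit of integration becomes $\th$; this bookkeeping must be handled with particular care when iterating to the second derivative, as the ordering of $\th_1,\th_2$ determines which boundary terms appear. Once the structure of these linear SVIEs is in place, everything reduces to standard $L^p$ estimates driven by (A3) and (A4).
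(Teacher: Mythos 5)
Your proposal is correct and follows essentially the same route as the paper: moment bounds plus Gronwall for $x$, the linear SVIE for $D_\th x$ with the boundary term $\si(t,\th,x(\th))$, the identical decomposition of $x(t)-x(t_0)$, and the same subtraction of the two Malliavin-derivative SVIEs (with the $\int_{\th_1}^{\th_2}$ "jump" integrals and the $\si$-boundary difference controlled via the time-regularity estimate) followed by Gronwall. The only difference is cosmetic: you spell out the second-derivative SVIE and the Cauchy--Schwarz step turning $2p_0$-moments into $p_0$-moments of products, which the paper compresses into a "similarly."
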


\begin{proof}
Suppose that $0\leq t_0\leq t\leq T$. Then by SVIE \eqref{svie}, one obtains
\begin{equation}\label{estix1}
\begin{aligned}
\me|x(t)|^{2p_0}
\leq& 3^{2p_0-1}\me|\varphi(t)|^{2p_0}+(3T)^{2p_0-1}\me\int_0^t|b(t,s,x(s))|^{2p_0}ds\\
      &  +3^{2p_0-1}T^{\frac {2p_0} 2-1}\me\int_0^t|\si(t,s,x(s))|^{2p_0}ds\\
 \leq & 3^{2p_0-1}L^{2p_0}+(6T)^{2p_0-1}\me\int_0^t|b(t,s,x(s))-b(t,s,0)|^{2p_0}+|b(t,s,0)|^{2p_0}ds\\
             &+6^{2p_0-1}T^{\frac {2p_0} 2-1}\me\int_0^t|\si(t,s,x(s))-\si(t,s,0)|^{2p_0}+|\si(t,s,0)|^{2p_0}ds\\
\leq & K+K\me\int_0^t|x(s)|^{2p_0}ds.\\
\end{aligned}
\end{equation}
Consequently, by virtue of Gronwall's inequality, we can get $\ds \sup_{0\leq t\leq T}\me|x(t)|^{2p_0}<K$. 
Also by the routine successive approximation argument, $D_\th x(\cdot)$, the Malliavin derivative of 
$x(\cdot)$, satisfies the following SVIE: 
for any $0\leq \th \leq t \leq T$
\begin{equation*}\label{estix5}
\begin{aligned}
D_\th x(t)=&D_\th\varphi(t)+\sigma(t,\th,x(\th))+\int_\th^t b_x(t,s,x(s))D_\th x(s)ds
        +\int_\th^t \sigma_x(t,s,x(s))D_\th x(s)dW(s).
\end{aligned}
\end{equation*}
Similarly, we also can obtain
$\ds \sup_{0\leq t\leq T,\, 0\leq\th_1,\th_2\leq t}\me\Big(|D_{\th_1}x(t)|^{2p_0}+|D_{\th_1}D_{\th_2}x(t)|^{p_0}\Big)<K$, 
which is the first inequality of  \eqref{regularityx}.

\ms

Now, making use of the first inequality of \eqref{regularityx},  with the similar estimate to 
that of \eqref{estix1}, we can obtain
\begin{equation*}\label{estix2}
\begin{aligned}
\me|x(t)-x(t_0)|^2
\leq &5\me|\varphi(t)-\varphi(t_0)|^2+5T\me\int_0^{t_0}|b(t,s,x(s))-b(t_0,s,x(s))|^2ds\\
  &+5\me\int_0^{t_0}|\si(t,s,x(s))-\si(t_0,s,x(s))|^2ds\\
  &+5\me\Big|\int_{t_0}^t b(t,s,x(s))ds\Big|^2+5\me\int_{t_0}^t|\si(t,s,x(s))|^2ds\\
\leq & K|t-t_0|+K\me\int_{t_0}^t|x(s)|^2ds\\
\leq & K|t-t_0|,\\
\end{aligned}
\end{equation*}
which is the second inequality of \eqref{regularityx}.

\ms

For the third inequality of \eqref{regularityx}, suppose that $\th_2\leq \th_1\leq t$.
Since
\begin{equation*}\label{april9}
\begin{aligned}
D_{\th_1}x(t)-D_{\th_2}x(t)
=&\big(D_{\th_1}\f(t)-D_{\th_2}\f(t)\big)+\big(\si(t,\th_1,x(\th_1))-\si(t,\th_2,x(\th_2))\big)\\
   &+\int_{\th_1}^t b_x\big(D_{\th_1}x(t)-D_{\th_2}x(t)\big)ds
         +\int_{\th_1}^t \si_x\big(D_{\th_1}x(t)-D_{\th_2}x(t)\big)dW(s)\\
   &-\int_{\th_2}^{\th_1} b_xD_{\th_2}x(t)ds-\int_{\th_2}^{\th_1} \si_xD_{\th_2}x(t)dW(s),\\
\end{aligned}
\end{equation*}
it easy to calculate that
\begin{equation*}\label{april10}
\begin{aligned}
&\me|D_{\th_1}x(t)-D_{\th_2}x(t)|^2\\
=&6\me|D_{\th_1}\f(t)-D_{\th_2}\f(t)|^2+6L^2\big(|\th_1-\th_2|+\me|x(\th_1)-x(\th_2)|^2\big)\\
   &+6(T+1)L^2\me\int_{\th_1}^t |D_{\th_1}x(s)-D_{\th_2}x(s)|^2ds
      +6(T+1)L^2\me\int_{\th_2}^{\th_1} |D_{\th_2}x(s)|^2ds\\
\leq & K|\th_2-\th_1|+6(T+1)L^2\me\int_{\th_1}^t |D_{\th_1}x(s)-D_{\th_2}x(s)|^2ds.\\
\end{aligned}
\end{equation*}
Hence, by Gronwall's inequality, we have
\begin{equation*}\label{april11}
\begin{aligned}
\sup_{\th_1,\th_2\leq t\leq T}\me|D_{\th_1}x(t)-D_{\th_2}x(t)|^2\leq K|\th_1-\th_2|,
\end{aligned}
\end{equation*}
 completing the proof.
\end{proof}

\subsection{Regularity of $(Y(\cdot),Z(\cdot,\cdot))$}
The following result on wellpossedness of BSVIE \eqref{bsvie1} comes from \cite[Theorem 3.7 and 4.1]{Yong08}.
\begin{theorem}\label{yong08}
Under assumptions {\rm(A1)--(A4)}, BSVIE \eqref{bsvie1} admits a unique solution $(Y(\cdot),Z(\cdot,\cdot))$.
Moreover, the following estimates holde:
\begin{equation}\label{yong081}
\begin{aligned}
&\me\int_S^T|Y(t)|^2dt+\me\int_S^T\int_t^T|Z(t,s)|^2dsdt\\
 & \leq C\bigg\{\me\int_S^T|g(t,x(T))|^2dt+\me\int_S^T\bigg(\int_t^T|f(t,s,0,0, 0)|ds\bigg)^2dt\bigg\},
         \,\,\mbox{for any }S\in [0,T],\\
\end{aligned}
\end{equation}
\begin{equation}\label{yong082}
\begin{aligned}
&\sum_{i=1}^n\me\bigg\{\int_S^T|D_r^iY(t)|^2dt+\int_S^T\int_t^T|D_r^iZ(t,s)|^2dsdt\bigg\}\\
  \leq& C\me\bigg\{\int_S^T|g(t,x(T))|^2dt+\sum_{i=1}^n\int_S^T|D_r^ig(t,x(T))|^2dt\\
    &\qq +\int_S^T\bigg(\int_t^T|f(t,s,0,0,0)|ds\bigg)^2dt\bigg\},
         \,\,\mbox{for any }r,\,S\in [0,T].\\
\end{aligned}
\end{equation}
Morevoer, $(D_r^iY(\cdot),D_r^iZ(\cdot,\cdot))$ is the adapted solution to the following 
BSVIE:
\begin{equation}\label{yong083}
\begin{aligned}
D_r^iY(t)
=&D_r^ig(t,x(T))+\int_t^T \Big(g_x(t,s,x(s),Y(s),Z(t,s))D_r^ix(s)\\
   &+g_y(t,s,x(s),Y(s),Z(t,s))D_r^iY(s)\\
          &+g_z(t,s,x(s),Y(s),Z(t,s))D_r^iZ(t,s)\Big)ds\\
          &-\int_r^TD_r^iZ(t,s)dW(s),\,\, t\in [r,T].
\end{aligned}
\end{equation}
In addition, for any $0\leq t<u\leq T$, $1\leq i\leq n$, 
\begin{equation}\label{yong084}
\begin{aligned}
Z_i(t,u)=&D_u^ig(t,x(T))+\int_u^T\Big(f_x(t,s,x(s),Y(s),Z(t,s))D_u^ix(s)\\
           &+f_y(t,s,x(s),Y(s),Z(t,s))D_u^iY(s)\\
            &+f_z(t,s,x(s),Y(s),Z(t,s))D_u^iZ(t,s)\Big)ds\\
            &-\int_u^TD_u^iZ(t,s)dW(s).
\end{aligned}
\end{equation}
\end{theorem}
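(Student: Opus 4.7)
My plan is to follow Yong's original strategy and treat \eqref{bsvie1} as a coupled family of BSDEs parameterized by $t$. The crucial observation is that if one freezes the process $Y(\cdot)$ appearing inside the driver $f(t,s,x(s),Y(s),Z(t,s))$ as a given data process $\bar Y(\cdot)$, then for each fixed $t\in[0,T]$ the remaining equation is a classical BSDE on $[t,T]$ with terminal value $g(t,x(T))$ and unknowns $(Y(t),Z(t,\cdot))$. This decoupling drives both the existence proof and all the estimates.

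For existence and uniqueness I would define the solution map $\Psi:L^2_\dbF(\O\times(0,T);\dbR^n)\to L^2_\dbF(\O\times(0,T);\dbR^n)$ by $\Psi(\bar Y)=Y$, where for each $t$, $(Y(t),Z(t,\cdot))$ is the unique solution of the BSDE just described. Lemma \ref{aaaa} guarantees well-definedness and yields, for two inputs $\bar Y_1,\bar Y_2$, the standard difference estimate $\me|Y_1(t)-Y_2(t)|^2+\me\int_t^T|Z_1(t,s)-Z_2(t,s)|^2ds\le C\me\int_t^T|\bar Y_1(s)-\bar Y_2(s)|^2ds$. Multiplying by $e^{\b t}$ and integrating over $t\in[0,T]$, the Volterra-type factor ($s\ge t$) allows one to choose $\b$ large enough that $\Psi$ becomes a strict contraction on $L^2_\dbF(\O\times(0,T);\dbR^n)$ equipped with the weighted norm, and Banach's fixed-point theorem delivers the unique solution.

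The a priori bound \eqref{yong081} then follows by applying It\^o's formula to $|Y(t)|^2$ on $[t,T]$ for each frozen $t$, using the Lipschitz bounds in (A1) on $f$ together with Young's inequality to absorb the $Z$-terms. Integrating the resulting inequality in $t\in[S,T]$ and applying Gronwall to $\int_S^T\me|Y(t)|^2dt$ produces \eqref{yong081}. For \eqref{yong082}--\eqref{yong083}, I would differentiate the BSVIE in the Malliavin sense at a point $r\in[0,T]$: since $f,g$ are $C^2$ with bounded derivatives by (A1)--(A2), an induction on the Picard iterates shows that each $(Y^n,Z^n)$ is Malliavin-differentiable and that $(D_r Y^n, D_r Z^n)$ is Cauchy in the appropriate space. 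Passing to the limit and invoking the chain rule for $D_r$ yields \eqref{yong083}, and the $L^2$-bound of step 2 applied to that linear BSVIE (whose coefficients are bounded uniformly), combined with Lemma \ref{sviex} for the Malliavin regularity of $x(\cd)$, gives \eqref{yong082}.

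For the representation \eqref{yong084} I would use a Clark--Ocone argument. Fixing $t$, set $M_t(u)=\me[g(t,x(T))+\int_t^T f(t,s,x(s),Y(s),Z(t,s))ds\mid\mf_u]$ for $u\in[t,T]$. The BSDE at parameter $t$ identifies $Z(t,\cd)$ as the martingale representation integrand of $M_t$, so $Z(t,u)=\me[D_u M_t(T)\mid\mf_u]$. Computing $D_u$ of the integrated driver by the chain rule and using that $D_u x(s),\,D_u Y(s),\,D_u Z(t,s)$ vanish for $s<u$, then re-expressing the conditional expectation through the BSDE form of $D_u Y(\cd)$ obtained in \eqref{yong083}, produces \eqref{yong084}. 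The main obstacle I foresee is the Malliavin step: one must run the fixed-point argument in a space that simultaneously controls $(Y,Z)$ and their Malliavin derivatives with parameter $r$, and closing the contraction at that level requires a careful choice of weighted norm that dominates both the undifferentiated and differentiated equations uniformly in $r$.
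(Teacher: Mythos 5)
The paper offers no proof of this theorem at all --- it is imported verbatim from \cite[Theorems 3.7 and 4.1]{Yong08} --- and your outline (reformulating \eqref{bsvie1} for each fixed $t$ as a parameterized BSDE on $[t,T]$, a weighted-norm contraction in $L^2_\dbF(\O\times(0,T);\dbR^n)$ exploiting the Volterra structure $s\ge t$, the It\^o-formula a priori bound, Malliavin differentiation of the Picard iterates for \eqref{yong082}--\eqref{yong083}, and the Clark--Ocone/diagonal identification of $Z(t,u)$ for \eqref{yong084}) is essentially the argument of that cited source, so it is consistent with what the paper relies on. The sketch is sound, with the closing of the Malliavin step via uniform-in-$r$ bounds on the differentiated iterates being exactly the point you already flag and exactly how the cited proof handles it.
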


\ms

The following result is used to deduce the convergence speed in the Euler method for BSVIE \eqref{bsvie1}.
\begin{lemma}\label{estimateyz}
Under assumptions {\rm(A1)--(A4)}, for any $t,\,t_0\in [0,T]$, it holds that
\begin{equation}\label{estiyz5}
\begin{aligned}
\me|Y(t)-Y(t_0)|^2+\me\int_{t\vee t_0}^T|Z(t,s)-Z(t_0,s)|^2ds\leq C|t-t_0|,
\end{aligned}
\end{equation}
where $C$ is a constant.
\end{lemma}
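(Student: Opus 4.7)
Without loss of generality assume $t_0\le t$ (the other case is symmetric and $t=t_0$ is trivial). The core idea is to embed the two scalar values $Y(t_0)$ and $Y(t)$ as values of two \emph{standard} BSDEs on the common time interval $[t_0,T]$, thereby reducing the BSVIE estimate to classical BSDE stability plus one Malliavin input. For each $\tau\in\{t_0,t\}$, view the BSVIE line at frozen first index $\tau$ as a Lipschitz BSDE in the running time $u\in[\tau,T]$, with the BSVIE solution $Y(\cdot)$ at the running second index $s$ treated as an externally given random coefficient:
\[
\tilde Y^\tau(u)=g(\tau,x(T))+\int_u^T f(\tau,s,x(s),Y(s),\tilde Z^\tau(s))\,ds-\int_u^T\tilde Z^\tau(s)\,dW(s).
\]
Uniqueness of this BSDE and the defining relation \eqref{bsvie1} identify $\tilde Y^\tau(\tau)=Y(\tau)$ and $\tilde Z^\tau(s)=Z(\tau,s)$ for $s\in[\tau,T]$. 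Splitting
\[
Y(t)-Y(t_0)=\bigl[\tilde Y^t(t)-\tilde Y^{t_0}(t)\bigr]+\bigl[\tilde Y^{t_0}(t)-\tilde Y^{t_0}(t_0)\bigr],
\]
I estimate the two brackets separately.

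The first bracket is handled by the BSDE stability estimate contained in Lemma~\ref{aaaa}, applied on $[t,T]$ to the difference $(\tilde Y^t-\tilde Y^{t_0},\tilde Z^t-\tilde Z^{t_0})$. By (A2) the terminal data differ in $L^2$ by at most $L|t-t_0|^{1/2}$, and by (A1) the driver difference is pointwise bounded by $L|t-t_0|^{1/2}+L|\tilde Z^t-\tilde Z^{t_0}|$. Standard BSDE stability thus yields
\[
\me\bigl|\tilde Y^t(t)-\tilde Y^{t_0}(t)\bigr|^2+\me\int_t^T|Z(t,s)-Z(t_0,s)|^2\,ds\le C|t-t_0|,
\]
which already accounts for the full $Z$-part of \eqref{estiyz5}.

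For the second bracket, $\tilde Y^{t_0}$ is a genuine BSDE, hence
\[
\tilde Y^{t_0}(t)-\tilde Y^{t_0}(t_0)=-\int_{t_0}^t f(t_0,s,x(s),Y(s),\tilde Z^{t_0}(s))\,ds+\int_{t_0}^t\tilde Z^{t_0}(s)\,dW(s).
\]
After Cauchy--Schwarz on the Lebesgue integral and It\^o's isometry on the stochastic integral, this is bounded by $C(t-t_0)\sup_{s\in[t_0,T]}\me\bigl(1+|x(s)|^2+|Y(s)|^2+|Z(t_0,s)|^2\bigr)$. The uniform bounds on $\me|x(s)|^2$ and $\me|Y(s)|^2$ come from Lemma~\ref{sviex} and a backward Gronwall argument based on the identification $Y(s)=\tilde Y^s(s)$. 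The main technical obstacle is the pointwise Malliavin bound $\sup_s\me|Z(t_0,s)|^2\le C$: I would derive it by conditioning the representation \eqref{yong084} on $\mf_s$ (which kills the $dW$-integral), leaving $Z(t_0,s)$ as a conditional expectation of terms of the form $g_x\cdot D_s x(T)$ and $f_x\cdot D_s x+f_y\cdot D_s Y+f_z\cdot D_s Z(t_0,\cdot)$, and then using the uniform boundedness of these partial derivatives from (A1)--(A2) together with the Malliavin $L^p$-controls supplied by Lemma~\ref{sviex} and \eqref{yong082}. A triangle inequality then delivers the bound on $\me|Y(t)-Y(t_0)|^2$ and finishes the proof.
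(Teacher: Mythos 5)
Your frozen-parameter reduction is sound: identifying $\tilde Y^\tau(\tau)=Y(\tau)$, $\tilde Z^\tau(\cdot)=Z(\tau,\cdot)$, estimating the first bracket by stability of the difference BSDE (terminal gap $L|t-t_0|^{1/2}$ from (A2), driver gap $L|t-t_0|^{1/2}$ from (A1)), and bounding $\sup_s\me|Y(s)|^2$ by Gronwall is, up to packaging, the same mechanism the paper uses (it invokes the BSVIE stability estimate of \cite{Yong08} where you invoke BSDE stability). The genuine gap is in your derivation of the crucial pointwise bound $\sup_{s}\me|Z(t_0,s)|^2\le C$. Conditioning \eqref{yong084} on $\mf_s$ kills only the stochastic integral; the Lebesgue integral still contains the term $f_z\,D_sZ(t_0,r)$, so after Cauchy--Schwarz you still need $\me\int_s^T|D_sZ(t_0,r)|^2\,dr<\infty$, uniformly, \emph{for the fixed first index} $t_0$. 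The estimate \eqref{yong082} does not deliver this: it controls $\int_S^T\int_t^T|D_rZ(t,s)|^2\,ds\,dt$, i.e.\ the Malliavin derivative of $Z$ only after integration over the first time index, hence for almost every $t_0$ but not at the particular $t_0$ you fixed, and no continuity of $t\mapsto D_\cdot Z(t,\cdot)$ is available to upgrade this. So the step does not close as written.

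The repair is exactly the paper's argument. For fixed $t_0$ and fixed Malliavin index $s$, \eqref{yong084} exhibits $Z(t_0,s)$ as the time-$s$ value of a \emph{linear} BSDE on $[s,T]$ whose second component is $D_sZ(t_0,\cdot)$, with coefficients bounded by $L$ (from (A1)--(A2)), terminal value $D_sg(t_0,x(T))=g_x(t_0,x(T))D_sx(T)$ and driver-at-zero $f_xD_sx+f_yD_sY$. Applying the a priori BSDE estimate (Lemma \ref{aaaa}, or \cite[Corollary 3.6]{Yong08} as the paper does) to this equation bounds $\me|Z(t_0,s)|^2$ and $\me\int_s^T|D_sZ(t_0,r)|^2\,dr$ \emph{simultaneously} by $C\big\{\me|D_sx(T)|^2+\me\int_s^T\big(|D_sx(r)|^2+|D_sY(r)|^2\big)dr\big\}$, which is finite uniformly in $s$ and $t_0$ by Lemma \ref{sviex} and by \eqref{yong082} (the latter needed only for $D_sY$, where the time-integrated form suffices). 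In other words, the $D_sZ(t_0,\cdot)$ term must be absorbed into the left-hand side of a BSDE estimate rather than estimated from \eqref{yong082}; with this substitution your decomposition and the remaining steps go through and yield \eqref{estiyz5}.
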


\begin{proof}

Suppose that $t_0<t$. By \cite[Corrolary 3.6]{Yong08}, under assumptions (A1)--(A4), we have
\begin{equation}\label{estiyz1}
\begin{aligned}
&\me|Y(t)-Y(t_0)|^2+\me\int_t^T|Z(t,s)-Z(t_0,s)|^2ds\\
\leq & C\bigg\{\me|g(t,x(T))-g(t_0,x(T))|^2+\me\Big(\int_{t_0}^t |f(t_0,s,x(s),Y(s),Z(t_0,s))|ds\Big)^2\\
       &\qq+ \me\Big(\int_t^T|f(t,s,x(s),Y(s),Z(t,s))-f(t_0,s,x(s),Y(s),Z(t,s))|ds\Big)^2\\
       &\qq+\me\int_{t_0}^t|Z(t_0,s)|^2ds\bigg\}\\
\leq & C|t-t_0|+C\me\int_{t_0}^t\big(|Y(s)|^2+|Z(t_0,s)|^2\big)ds.\\
\end{aligned}
\end{equation}
For $\ds\me|Y(\cdot)|^2$, also by  \cite[Corrolary 3.6]{Yong08}, one has
\begin{equation*}\label{estiyz001}
\begin{aligned}
\me|Y(t)|^2+\me\int_t^T|Z(t,s)|^2ds
\leq & C\bigg\{\me|g(t,x(T))|^2+\me\Big(\int_t^T|f(t,s,x(s),Y(s),0)|ds\Big)^2\bigg\}\\
\leq & C+C\me\int_{t}^T |Y(s)|^2 ds.\\
\end{aligned}
\end{equation*}
By Gronwall's inequality, one gets that $\ds\sup_{0\leq t\leq T}\me|Y(t)|^2\leq C$. Thus 
\begin{equation}\label{estiyz002}
\begin{aligned}
\me\int_{t_0}^t|Y(s)|^2ds\leq C|t-t_0|.
\end{aligned}
\end{equation}
Setting $t=t_0$ in \eqref{yong084}, by \cite[Corollary 3.6]{Yong08}, Lemma \ref{sviex} and \eqref{yong082},
we can obtian
\begin{equation}\label{estiyz003}
\begin{aligned}
 &\me|Z(t_0,u)|^2+\me\int_u^T|D_uZ(t_0,s)|^2ds\\
\leq & C\bigg\{\me|D_ug(t_0,x(T))|^2+\me\Big(\int_u^T\big(|f_x(t_0,s,x(s),Y(s),Z(t_0,x))D_ux(s)|\\
       &\qq+|f_y(t_0,s,x(s),Y(s),Z(t_0,x))D_uY(s)|\big)ds\Big)^2\bigg\}\\
\leq & C\bigg\{\me|D_ux(T)|^2+\me\int_u^T\big(|D_ux(s)|^2+|D_uY(s)|^2\big)ds\bigg\}\\
<&\infty.
\end{aligned}
\end{equation}
Now, \eqref{estiyz1}, together with \eqref{estiyz002} and \eqref{estiyz003}, yields that 
\begin{equation*}\label{estiyzt}
\begin{aligned}
\me|Y(t)-Y(t_0)|^2+\me\int_t^T|Z(t,s)-Z(t_0,s)|^2ds
\leq C|t-t_0|,
\end{aligned}
\end{equation*}
which is \eqref{estiyz5}.
\end{proof}

\section{The Euler method for SVIEs}
The aim of this section is to review the Euler method for SVIE \eqref{svie} under 
assumptions (A3)--(A4). For numerical
solutions to general 
SVIEs with singular kernels, one can refer to  \cite{ZhangXC08}.

For simplicity, throughout this paper, we assume that $\ds \D_i=|\pi|=\frac{T}{N}\leq 1$, for each $i=0,1,\cdots,N-1$. Our numerical scheme
still works for general uniform partition of $[0,T]$ (i.e., there exists a constant $K$, such that $K|\pi|\leq \D_j$, for any $j=0,1,\cdots,N-1$). We also need the following two fucntions $\t(\cdot)$ and 
$\pi(\cdot)$ defined on $[0,T)$ by
\begin{equation}\label{taupi}
\begin{aligned}
\t(t)=t_i,\,\,\,\pi(t)=i,\,\,\qq t\in [t_i,t_{i+1}),\,i=0,1,\cdots,N-1.
\end{aligned}
\end{equation}

\ms

The Euler method for SVIE \eqref{svie} is as follows: 
\begin{equation}\label{svieeuler}
\left\{\begin{aligned}
x^\pi(0)=&x^\pi(t_0)=\varphi(0),\\
x^\pi(t_{i+1})=&\varphi(t_{i+1})+\sum_{k=0}^i \Big(b(t_{i+1},t_k,x^\pi(t_k))\D_k+\sigma(t_{i+1},t_k,x^\pi(t_k))\D_kW\Big),\\
&\qq\qq\qq\qq\qq\qq\qq i=0,1,\cdots, N-1.\\
\end{aligned}
\right.
\end{equation}
In order to obtain the convergent speed, we introduce the following SVIE  related to \eqref{svieeuler}:
 \begin{equation}\label{svie20}
\begin{aligned}
x^\pi(t)=&\varphi(t)+\int_0^t b(t,\t(s),x^\pi(\t(s)))ds+\int_0^t \sigma(t,\t(s),x^\pi(\t(s)))dW(s),\,\,t\in[0,T].\\
\end{aligned}
\end{equation}

\ms

Now, we are in the step to obtain the convergent speed for the Euler method \eqref{svieeuler}. By SVIEs \eqref{svie} and \eqref{svie20}, one has, for any $t\in [0,T]$,
 \begin{equation*}\label{svieconver1}
\begin{aligned}
x(t)-x^\pi(t)=&\int_0^t \big(b(t,s,x(s))-b(t,\t(s),x^\pi(t(s)))\big)ds\\
  & +\int_0^t \big(\sigma(t,s,x(s))-\sigma(t,\t(s),x^\pi(\t(s)))\big) dW(s).\\
\end{aligned}
\end{equation*}
A direct calculation leads to
 \begin{equation}\label{svieconver2}
\begin{aligned}
&\me|x(t)-x^\pi(t)|^2\\
\leq&2T\me\int_0^t |b(t,s,x(s))-b(t,\t(s),x^\pi(\t(s)))|^2ds\\
   &+2\me\int_0^t |\sigma(t,s,x(s))-\sigma(t,\t(s),x^\pi(\t(s)))|^2 ds\\
   \leq&2(T+1)L^2\me\int_0^t\Big((s-\t(s))+|x(s)-x^\pi(s)|^2+|x^\pi(s)-x^\pi(\t(s))|^2\Big)ds.
\end{aligned}
\end{equation}
By the definition of $\t$ in \eqref{taupi} (suppose that $t\in [t_i, t_{i+1})$, $i=0,1,\cdots,N-1$),
 \begin{equation}\label{svieconver3}
\begin{aligned}
&\me\int_0^t (s-\t(s))ds=\sum_{k=0}^{i-1}\int_{t_k}^{t_{k+1}}(s-t_k)ds+\int_{t_i}^t(s-t_i)ds
=\sum_{k=0}^{i-1}\frac{\D_k^2}{2}+\frac{(t-t_i)^2}{2}\leq T|\pi|.
\end{aligned}
\end{equation}
Now, supposing that  $t\in [t_i, t_{i+1})$, we estimate $\me|x^\pi(t)-x^\pi(\t(t))|^2$. By SVIE \eqref{svie20}, one has
 \begin{equation*}\label{svieconver4}
\begin{aligned}
&x^\pi(t)-x^\pi(\t(t))=x^\pi(t)-x^\pi(t_i)\\
=&(\varphi(t)-\varphi(t_i))+\int_0^{t_i}\Big(b(t,\t(s),x^\pi(\t(s)))-b(t_i,\t(s),x^\pi(\t(s)))\Big)ds\\
  &+\int_0^{t_i}\Big(\sigma(t,\t(s),x^\pi(\t(s)))-\sigma(t_i,\t(s),x^\pi(\t(s)))\Big)dW(s)\\
  &+\int_{t_i}^t b(t,t_i,x^\pi(t_i))ds+\int_{t_i}^t \sigma(t,t_i,x^\pi(t_i))dW(s).\\
\end{aligned}
\end{equation*}
Then, under assumptions (A3)--(A4), it is easy to check that
\begin{equation}\label{svieconver5}
\begin{aligned}
\me|x^\pi(t)-x^\pi(\t(t))|^2
\leq&5\me|\varphi(t)-\varphi(t_i)|^2+5\Big(\int_0^{t_i}L\sqrt{t-t_i}ds\Big)^2
  +5\int_0^{t_i}L^2(t-t_i)ds\\
  &+5T\me\int_{t_i}^{t_{i+1}}|b(t,t_i,x^\pi(t_i))|^2ds+5\me\int_{t_i}^{t_{i+1}}|\sigma(t,t_i,x^\pi(t_i))|^2ds\\
 \leq & C|\pi|+ C\me|x^\pi(t_i)|^2|\pi|.
\end{aligned}
\end{equation}
For $\me|x^\pi(t_i)|^2$,  also by SVIE \eqref{svie20},
\begin{equation*}\label{svieconver6}
\begin{aligned}
\me|x^\pi(t)|^2
\leq &3\me|\varphi(t)|^2+6T\me\int_0^{t}|b(t,\pi(s),0)|^2ds+6TL^2\int_0^{t}|x^\pi(\pi(s))|^2ds\\
  &+6\me\int_0^{t}|\sigma(t,\pi(s),0)|^2ds+6L^2\int_0^{t}|x^\pi(\pi(s))|^2ds\\
\leq &C+6L^2(T+1)\int_0^t\me|x^\pi(\pi(s))|^2ds.\\
\end{aligned}
\end{equation*}
Setting $\ds g(t)=\sup_{s\in[0,t]}\me|x^\pi(s)|^2$, by Gronwall's inequality, one obtains
\begin{equation}\label{svieconver7}
\begin{aligned}
g(t)\leq Ce^{6L^2T(T+1)}, \,\mbox{for all}\,\, t\in [0,T].
\end{aligned}
\end{equation}
\eqref{svieconver5}, together with \eqref{svieconver7}, yields that
\begin{equation}\label{svieconver8}
\begin{aligned}
\me|x^\pi(t)-x^\pi(\t(t))|^2\leq C|\pi|,\, \forall t\in [0,T].
\end{aligned}
\end{equation}

By \eqref{svieconver2}, \eqref{svieconver3} and \eqref{svieconver8}, we have
\begin{equation*}\label{svieconver9}
\begin{aligned}
\me|x(t)-x^\pi(t)|^2\leq C|\pi|+2L^2(T+1)\int_0^t \me|x(s)-x^\pi(s)|^2ds,
\end{aligned}
\end{equation*}
which, by Gronwall's inequality, deduces that
\begin{equation*}\label{svieconver10}
\begin{aligned}
\sup_{t\in[0,T]}\me|x(t)-x^\pi(t)|^2\leq e^{2L^2(T+1)T}C|\pi|.
\end{aligned}
\end{equation*}

By the above analysis, we get the following convergence speed of the Euler method \eqref{svieeuler} for SVIE \eqref{svie}.
\begin{theorem}\label{svie-error}
Let {\rm(A3)--(A4)} hold. Then for $x(\cdot)$ and $x^\pi(\cdot)$ defined as in \eqref{svie} and \eqref{svieeuler}, respectively, there exists a constant $C$, depending only on $L$ and $T$, such that
\begin{equation}\label{svie-error1}
\begin{aligned}
\max_{0\leq i\leq N}\me|x(t_i)-x^\pi(t_i)|^2\leq C|\pi|.
\end{aligned}
\end{equation}
\end{theorem}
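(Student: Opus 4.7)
The plan is to prove the stronger continuous estimate $\sup_{t\in[0,T]}\me|x(t)-x^\pi(t)|^2 \leq C|\pi|$, from which \eqref{svie-error1} follows trivially by restricting to the mesh. The natural vehicle is the continuous-time interpolation $x^\pi(\cdot)$ defined by \eqref{svie20}, which coincides with the Euler values $x^\pi(t_i)$ at the partition points.

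Subtracting \eqref{svie20} from \eqref{svie}, applying $(a+b)^2\leq 2a^2+2b^2$, Cauchy--Schwarz on the Lebesgue integral, and the It\^o isometry on the stochastic integral, then invoking the joint Lipschitz/H\"older regularity of $b$ and $\si$ from (A3), I would obtain a bound of the form
$$\me|x(t)-x^\pi(t)|^2 \leq C\me\int_0^t\Big((s-\t(s)) + |x(s)-x^\pi(s)|^2 + |x^\pi(s)-x^\pi(\t(s))|^2\Big)ds.$$
The elementary estimate $\int_0^t(s-\t(s))ds\leq T|\pi|$ disposes of the first term. The remaining task splits into two sub-problems: (i) controlling the local oscillation $\me|x^\pi(s)-x^\pi(\t(s))|^2$ by $C|\pi|$ uniformly in $s$, and (ii) closing the Gronwall loop for $\me|x(t)-x^\pi(t)|^2$.

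For the oscillation bound, I would first establish an a priori moment estimate $\sup_{t\in[0,T]}\me|x^\pi(t)|^2 \leq C$ by a Gronwall argument applied directly to \eqref{svie20}, exploiting the linear growth of $b,\si$ implied by (A3) combined with the bound on $\varphi$ in (A4). Then for $t\in[t_i,t_{i+1})$ I would decompose $x^\pi(t)-x^\pi(t_i)$ into three pieces: the input-path increment $\varphi(t)-\varphi(t_i)$ (controlled by (A4)); the integrals over $[0,t_i]$ in which only the outer time variable of $b$ and $\si$ changes from $t$ to $t_i$, which are of order $(t-t_i)^{1/2}$ pointwise thanks to the H\"older-$1/2$ regularity in the first argument from (A3); and the short integrals over $[t_i,t]$ whose length is at most $|\pi|$, handled by the just-established moment bound. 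Combining these gives the desired $\me|x^\pi(t)-x^\pi(\t(t))|^2 \leq C|\pi|$.

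Substituting back into the main inequality and applying Gronwall to $t\mapsto \me|x(t)-x^\pi(t)|^2$ produces the claim. The main obstacle, relative to the classical Euler analysis for ordinary SDEs, is the \emph{Volterra structure}: the coefficients $b(t,s,x)$ and $\si(t,s,x)$ depend on the outer time $t$, so the $L^2$-comparison at a fixed $t$ must track how these coefficients vary as the first argument moves between $t$ and $t_i$. It is precisely the H\"older-$1/2$ regularity in $t$ postulated in (A3) that produces the $|\pi|$ rate; replacing it with mere continuity would not close the argument.
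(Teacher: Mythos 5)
Your proposal is correct and follows essentially the same route as the paper: it uses the continuous interpolation \eqref{svie20}, the same error decomposition with the terms $(s-\t(s))$, $|x(s)-x^\pi(s)|^2$ and the local oscillation $|x^\pi(s)-x^\pi(\t(s))|^2$, the same a priori moment bound and three-piece splitting (input increment, outer-time H\"older variation on $[0,t_i]$, short integrals on $[t_i,t]$), and Gronwall to close. No substantive differences to report.
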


\ms

\section{The Euler method for BSVIEs}
In this section, we mainly present the Euler method to calculate the numerical solution to BSVIE \eqref{bsvie1}, and prove  the convergence speed of that method for  \eqref{bsvie1}. For  $1\leq k\leq N-1$, we present the Euler method for BSVIE \eqref{bsvie1} as follows:
\begin{equation}\label{ass1}
\left\{
\begin{aligned}
&Y^{k,\pi}(t_N)=g(t_k,x^\pi(T)),\\
&Y^{k,\pi}(t_l)=\me\Big(Y^{k,\pi}(t_{l+1})+f(t_k,t_l,x^\pi(t_l),Y^{l,\pi}(t_{l+1}),Z^{k,\pi}(t_{l}))\D_l\big|\mf_{t_l}\Big),\\
&Z^{k,\pi}(t_l)=\me\Big(\frac{\D_lW}{\D_l}\big(Y^{k,\pi}(t_{l+1})+f(t_k,t_l,x^\pi(t_l),Y^{l,\pi}(t_{l+1}),Z^{k,\pi}(t_{l}))\D_l\big)\big|\mf_{t_l}\Big),\\
 &\qq\qq\qq\qq\qq\qq\qq \qq\qq k\leq l\leq N-1.\\
\end{aligned}
\right.
\end{equation}
Here $\ds x^\pi(\cdot)\,(k\leq l\leq N-1)$ is defined by \eqref{svieeuler}.

In order to obtain the convergence speed, we introduce  $(Y^k(\cdot),Z^k(\cdot))\,(k=0,1,\cdots, N-1)$ solving the following BSDE: 
\begin{equation}\label{ass3}
\left\{\begin{aligned} 
& dY^k(s)=-f(t_{k},s,x(s),Y^l(s),Z^k(s))ds+Z^k(s)dW(s),\, s\in (t_l,t_{l+1}],\,k+1\leq l\leq N-1,\\
& dY^k(s)=-f(t_{k},s,x(s),Y^k(s),Z^k(s))ds+Z^k(s)dW(s),\, s\in [t_k,t_{k+1}],\\
& Y^k(T)=g(t_k,x(T)),\,Y^k(t_l)=Y^k(t_l+0),\,k+1\leq l\leq N-1,
\end{aligned}\right. 
\end{equation}
and  $\ds (Y^{k,\pi}(\cdot),\widehat Z^{k,\pi}(\cdot))\,(k=0,1,\cdots, N-1)$ solving the following BSDE: 
\begin{equation}\label{ass4}
\left\{\ba{ll} 
\ds Y^{k,\pi}(t_{l+1})-Y^{k,\pi}(t)=-f(t_{k},t_l,x^\pi(t_l),Y^{l,\pi}(t_{l+1}),\widehat Z_0^{k,\pi}(t_{l}))\D_l+\int_t^{t_{l+1}}\widehat Z^{k,\pi}(s)dW(s), \\
\ds                                  \qq\qq\qq\qq\qq \q  \qq\qq\qq\qq\, t\in (t_l,t_{l+1}],\,k+1\leq l\leq N-1,\\
\ds Y^{k,\pi}(t_{k+1})-Y^{k,\pi}(t)=-f(t_{k},t_k,x^\pi(t_k),Y^{k,\pi}(t_{k+1}),\widehat Z_0^{k,\pi}(t_{k}))\D_k+\int_t^{t_{k+1}}\widehat Z^{k,\pi}(s)dW(s),\\
\ds                                  \qq\qq\qq\qq\qq \q  \qq\qq\qq\qq \, t\in [t_k,t_{k+1}],\\
\ds Y^{k,\pi}(T)=g(t_k,x^\pi(T)),\,Y^{k,\pi}(t_l)=Y^{k,\pi}(t_l+0),\,k+1\leq l\leq N-1,\\
\ds \widehat Z_0^{k,\pi}(t_N)=0,\,\widehat Z_0^{k,\pi}(t_l)=\frac{1}{\D_l}\me\Big(\int_{t_l}^{t_{l+1}}\widehat Z^{k,\pi}(u)du|\mf_{t_l}\Big),\, k\leq l\leq N-1.
\ea\right. 
\end{equation}

\begin{remark}

(i) When $|\pi|<L^2$, BSDE \eqref{ass4} admits a unique solution.

(ii)
By \eqref{ass1} and BSDE \eqref{ass4}, in the  cases: (I) $f=f(t,s,x,y)$; (II) $f=f(t,s,x,z)$, 
we can easily check that, for any $k=0,1,\cdots,N-1$, $k+1\leq j\leq N-1$,
\begin{equation}\label{ass5}
\ba{c}
\ds Z^{k,\pi}(t_j)=\frac{1}{\D_j}\me\Big(\int_{t_j}^{t_{j+1}}\widehat Z^{k,\pi}(u)du|\mf_{t_j}\Big)=\widehat Z_0^{k,\pi}(t_j).
\ea
\end{equation}
\end{remark}

By the definition of $\t(\cdot)$ and $\pi(\cdot)$ in \eqref{taupi}, we can define 
$\ds Y^{\pi(t)}(t)=Y^k(t),\,Z^{\pi(t)}(s)=Z^k(s)$, 
and $\ds Y^{\pi(t),\pi}(\t(t))=Y^{k,\pi}(t_k),\,Z^{\pi(t),\pi}(\t(s))=Z^{k,\pi}(t_j)$,
for $\ds t\in [t_k,t_{k+1})$, 
$\ds k=0,1,\cdots,N-1$, and $s\geq t$, $s\in [t_j,t_{j+1}]$, $k\leq j\leq N-1$. 
The following result comes from \cite{Yong16}.
\begin{theorem}\label{yong1}
Let {\rm (A1)--(A4)} hold. Then, BSDE \eqref{ass3} admits a unique solution $(Y^{\pi(\cdot)}(\cdot), Z^{\pi(\cdot)}(\cdot))$, and 
\begin{equation*}
\begin{aligned}
\me\int_0^T|Y(t)-Y^{\pi(t)}(t)|^2dt+\me\int_0^T\int_t^T|Z(t,s)-Z^{\pi(t)}(s)|^2dsdt\leq K|\pi|,
\end{aligned}
\end{equation*}
where $K$ is a constant only depending on $L$ and $T$.
\end{theorem}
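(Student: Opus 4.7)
The plan is to combine a backward induction on the index $k$ for existence and uniqueness with a single BSDE-stability estimate that compares each $Y^k$ to a ``frozen-$t$'' version of the original BSVIE, and then to close by a one-variable Gronwall argument.

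First, I would establish existence and uniqueness of $(Y^k,Z^k)_{k=0}^{N-1}$ by solving \eqref{ass3} in the order $k=N-1,N-2,\dots,0$. For $k=N-1$ the system reduces to a standard Lipschitz BSDE on $[t_{N-1},T]$ with terminal $g(t_{N-1},x(T))$, so Lemma \ref{aaaa} applies. For $k<N-1$, assume $Y^{k+1},\dots,Y^{N-1}$ have already been constructed; on each subinterval $(t_l,t_{l+1}]$ with $l\geq k+1$ the $y$-argument in the driver of \eqref{ass3} is the already-known process $Y^l(\cdot)$, which acts as an $L^2$ exogenous input, while on $[t_k,t_{k+1}]$ the driver truly depends on $y=Y^k(s)$. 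The equation for $(Y^k,Z^k)$ is therefore a standard Lipschitz BSDE on $[t_k,T]$, built subinterval by subinterval backward from $t_N=T$ using the continuity conditions $Y^k(t_l)=Y^k(t_l+0)$; Lemma \ref{aaaa} again provides the adapted solution and an a priori $L^2$ bound.

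For the error estimate, fix $t\in [t_k,t_{k+1})$ and introduce the auxiliary ``frozen-$t$'' process
\begin{equation*}
\wt Y(r,t)=g(t,x(T))+\int_r^T f(t,s,x(s),Y(s),Z(t,s))\,ds-\int_r^T Z(t,s)\,dW(s),\q r\in[t,T],
\end{equation*}
so that \eqref{bsvie1} gives $\wt Y(t,t)=Y(t)$. Both $\wt Y(\cdot,t)$ and $Y^k(\cdot)$ are then adapted solutions of standard BSDEs on $[t,T]$ once $Y(\cdot)$ and $Y^{\pi(\cdot)}(\cdot)$ are treated as exogenous inputs in their respective drivers. Assumption (A2) bounds the terminal discrepancy by $L^2|\pi|$; (A1) splits the driver discrepancy into a time-shift piece $\leq L|\pi|^{1/2}$ and a Lipschitz piece $\leq L|Y(s)-Y^{\pi(s)}(s)|+L|Z(t,s)-Z^k(s)|$. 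Applying It\^o's formula and Young's inequality to $|\wt Y(r,t)-Y^k(r)|^2$ and absorbing the $|Z(t,s)-Z^k(s)|^2$ contribution on the left yields, with $\psi(t):=\me|Y(t)-Y^{\pi(t)}(t)|^2$,
\begin{equation*}
\psi(t)+\me\int_t^T|Z(t,s)-Z^{\pi(t)}(s)|^2\,ds\leq C|\pi|+C\int_t^T\psi(s)\,ds.
\end{equation*}

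Finally, backward Gronwall applied to $\psi(t)\leq C|\pi|+C\int_t^T\psi(s)\,ds$ gives $\psi(t)\leq Ke^{CT}|\pi|$ uniformly in $t\in[0,T]$; integrating over $t$ delivers the $Y$-bound, and reinserting into the previous display delivers the $Z$-bound, assembling into the claimed estimate. I expect the main subtlety to be the bookkeeping in the comparison step: the observation that $Y^{\pi(t)}(t)=Y^k(t)$ throughout the whole subinterval $[t_k,t_{k+1})$ (not only at the grid point $t_k$) is what lets a single continuous-time Gronwall argument close the estimate without a separate discrete recursion on $k$, and the Young weight used on the $|Z(t,s)-Z^k(s)|$ cross term must be chosen tightly enough that this term is truly absorbed rather than merely bounded.
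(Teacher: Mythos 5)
Your proposal is essentially correct, but it is worth noting that the paper itself does not prove this statement at all: Theorem \ref{yong1} is imported verbatim from \cite{Yong16} ("The following result comes from \cite{Yong16}"), so your argument is a self-contained substitute for an external citation rather than an alternative to an in-paper proof. The route you take is the natural one and matches the spirit of how such approximation results are proved: backward induction in $k$ for well-posedness (for each $k$ the equation on $[t_k,T]$ is a single Lipschitz BSDE once $Y^{k+1},\dots,Y^{N-1}$ are known, since the $y$-slot of the driver equals the exogenous process $Y^{\pi(s)}(s)$ for a.e.\ $s$), and for the rate a comparison of $Y^k$ with the frozen-$t$ BSDE $\big(\wt Y(\cdot,t),Z(t,\cdot)\big)$ on $[t,T]$, whose terminal and driver discrepancies are controlled by (A2), (A1) and the Lipschitz property, exactly as you indicate; the constant then depends only on $L$ and $T$ as required. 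One bookkeeping caveat: the It\^o/Young step does not directly yield your displayed inequality in terms of $\psi$ alone, because the quadratic cross terms produce $\me|\wt Y(s,t)-Y^k(s)|^2$ for $s>t$, and $\wt Y(s,t)\neq Y(s)$ off the diagonal, so this quantity is not $\psi(s)$. You first need a Gronwall argument in the running variable $r$ for fixed $t$ (using that the same inequality holds from any $r\in[t,T]$) to bound $\sup_{r\in[t,T]}\me|\wt Y(r,t)-Y^k(r)|^2$ by $C|\pi|+C\int_t^T\psi(s)\,ds$, evaluate at $r=t$ to get $\psi(t)$, and only then apply the backward Gronwall in $t$; so the argument uses two nested Gronwall inequalities rather than literally one, though your main structural point stands -- no discrete recursion over $k$ is needed for this estimate, in contrast with the discrete Gronwall-type lemmas (Lemma \ref{gron}) the paper relies on elsewhere. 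With that adjustment, reinserting the $Y$-bound gives the $Z$-bound and the integrated estimate follows as you say.
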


Now we state our main result on convergence speed of the Euler method \eqref{ass1} for BSVIE \eqref{bsvie1}.

\begin{theorem}\label{convergence}
Suppose that $f=f(t,s,x,y)$ or $f=f(t,s,x,z)$ in BSDE \eqref{ass3}, and let {\rm (A1)--(A4)} hold. Then
\begin{equation}\label{conver}
\begin{aligned}
\sup_{0\leq t\leq T}\me|Y(\t(t))-Y^{\pi(t),\pi}(\t(t))|^2+\me\int_0^T\int_t^T|Z(t,s)-Z^{\pi(t),\pi}(\t(s))|^2ds
\leq K|\pi|,
\end{aligned}
\end{equation}
where $K$ is a constant depending only on $L$ and $T$.

\end{theorem}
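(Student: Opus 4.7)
The plan is to split the error by a triangle inequality through the intermediate BSDE system \eqref{ass3} and bound each piece. Write
$$Y(t_k)-Y^{k,\pi}(t_k)=\big[Y(t_k)-Y^k(t_k)\big]+\big[Y^k(t_k)-Y^{k,\pi}(t_k)\big],$$
and similarly $Z(t,s)-Z^{k,\pi}(\t(s))=[Z(t,s)-Z^k(s)]+[Z^k(s)-Z^{k,\pi}(\t(s))]$, with $(Y^k,Z^k)$ as in \eqref{ass3} and $(Y^{k,\pi},Z^{k,\pi})$ as in \eqref{ass1}. The first bracket represents the semi-discrete BSVIE-to-BSDE error analyzed by Yong (available through Theorem \ref{yong1}); the second bracket is the fully-discrete Euler error for the BSDE system, which is where the assumption $f=f(t,s,x,y)$ or $f=f(t,s,x,z)$ is needed.

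\textbf{Pointwise BSVIE--BSDE error.} First I would promote the integrated bound of Theorem \ref{yong1} to a pointwise-in-$k$ estimate. The idea is that, for each fixed $k$, the process $s\mapsto Y(t_k)-\int_{t_k}^s f(t_k,u,x(u),Y(u),Z(t_k,u))du+\int_{t_k}^s Z(t_k,u)dW(u)$ solves a standard BSDE on $[t_k,T]$ with terminal $g(t_k,x(T))$ and ``initial'' value $Y(t_k)$. Comparing this with $Y^k$, which solves a BSDE on $[t_k,T]$ with the same terminal but driver $f(t_k,\cd,x(\cd),Y^{\pi(\cd)}(\cd),Z^k(\cd))$, the standard stability estimate together with the Lipschitz condition on $f$ yields
$$\me|Y(t_k)-Y^k(t_k)|^2+\me\int_{t_k}^T|Z(t_k,s)-Z^k(s)|^2 ds\le C\me\int_{t_k}^T|Y(s)-Y^{\pi(s)}(s)|^2 ds\le CK|\pi|,$$
the last step by Theorem \ref{yong1}. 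Thus the BSVIE--BSDE error is uniformly $O(|\pi|)$ in $k$.

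\textbf{Euler error for the BSDE system.} For the second bracket I would argue by backward induction on $k$. When $k=N-1$, the equation for $Y^{N-1}$ is a standard BSDE on $[t_{N-1},T]$, and the classical Euler estimate gives an $O(|\pi|)$ error. For $k<N-1$, on each subinterval $(t_l,t_{l+1}]$ with $l>k$ the driver of $Y^k$ involves $Y^l$ (respectively $Z^k$), and in the scheme \eqref{ass1} the corresponding slot is $Y^{l,\pi}(t_{l+1})$; since the induction hypothesis has already bounded the error for $Y^l$ with $l>k$, standard $L^2$ estimates for the discrete BSDE \eqref{ass4} together with the regularity of $Y^l$ (via a time-regularity lemma analogous to Lemma \ref{estimateyz}) propagate the rate $|\pi|$. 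Here the restriction to $f=f(t,s,x,y)$ or $f=f(t,s,x,z)$ is crucial: in Case (I) the $Z$-variable does not enter the driver so the scheme is explicit in $Z$, while in Case (II) the $Y$-coupling is absent and the implicit equation in $Z^{k,\pi}(t_l)$ is uniquely solvable for $|\pi|<L^{-2}$. In both cases I would obtain $\max_k\me|Y^k(t_k)-Y^{k,\pi}(t_k)|^2+\max_k\me\int_{t_k}^T|Z^k(s)-Z^{k,\pi}(\t(s))|^2 ds\le K|\pi|$. The $Z$-part also needs the representation \eqref{yong084} to express $Z^k(s)$ through Malliavin derivatives and to compare it with the conditional-expectation definition of $Z^{k,\pi}(\t(s))$.

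\textbf{Combination and main obstacle.} Finally I combine the two brackets by the triangle inequality; for the $Z$-integral I use Lemma \ref{estimateyz} to pass from $Z(t_k,s)$ to $Z(t,s)$ for $t\in[t_k,t_{k+1})$, paying only an extra $O(|\pi|)$. I expect the main technical difficulty to be the Euler error step for the coupled system \eqref{ass3}: the equations for different $k$'s are linked through the off-diagonal terms $Y^l(s)$ (resp.\ $Z^k(s)$) on $(t_l,t_{l+1}]$, and in \eqref{ass1} the analogous linkage $Y^{l,\pi}(t_{l+1})$ forces a staged backward induction that must not lose the rate $|\pi|$. The hypothesis that $f$ depends on only one of $y,z$ is precisely what isolates one type of coupling so that the induction closes; handling the general case $f(t,s,x,y,z)$ would require controlling both couplings simultaneously, which is why the theorem is restricted to Cases (I) and (II).
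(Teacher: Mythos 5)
Your proposal follows essentially the same route as the paper: you split the error through the auxiliary BSDE family \eqref{ass3}, bound the BSVIE-to-BSDE part (the paper's Proposition \ref{yyk}, which you obtain a bit more directly from Theorem \ref{yong1} combined with standard BSDE stability) and the Euler part for the coupled system (the paper's Lemma \ref{wang1} via backward induction and the discrete Gronwall Lemma \ref{gron}), and recombine using Lemma \ref{estimateyz} exactly as in the paper's final step. The one caveat is that the bulk of the paper's technical work --- the path regularity $\me|Z^k(s)-Z^k(\t(s))|^2\le C|\pi|$ of Lemmas \ref{regzka}--\ref{regzkb}, proved through the Malliavin representation $Z^k(t)=D_tY^k(t)$ (not \eqref{yong084}, which concerns the BSVIE itself) and, in case (II), the linearization with $\Psi_k,\Phi_k$ in the spirit of Hu--Nualart--Song --- is only named rather than carried out, but since you correctly flag that a Malliavin-based regularity of $Z^k$ is the needed ingredient, the plan is sound and matches the paper's architecture.
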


The proof of Theorem \ref{convergence} is lengthy, we split it into several lemmas.

\subsection{Regularity of $(Y^{\pi(\cdot)}(\cdot),Z^{\pi(\cdot)}(\cdot))$}

In this part, we mainly study the regularity of $Y^{\pi(\cdot)}(\cdot)$ and $Z^{\pi(\cdot)}(\cdot)$,
which is crucial in proving Theorem \ref{convergence}. First, we need the following lemma.
\begin{lemma}\label{gron}
(1) Suppose that $a_i\geq 0,\,b_i\geq 0,\,c_0> 0$ $(i=0,1,\cdots,N-1)$, and 
$$a_i\leq b_i+c_0\sum_{k=i+1}^{N-1}a_k.$$
Then 
\begin{equation}\label{gronwall1}
\begin{aligned}
a_i\leq b_i+c_0\sum_{k=i+1}^{N-1}(1+c_0)^{k-i-1}b_k.
\end{aligned}
\end{equation}

(2) Suppose that $b,\,K$ are positive constants, $\g=1 \mbox{ or } 2$, and for any $k=0,1,\cdots,N-1,\, k+1\leq j\leq N-1$
\begin{equation}\label{kk03}
\ba{c}
\ds a_{k,j}\leq b a_{k,j+1}+b|\pi| a_{j,j}+bK|\pi|^\g,\\
\ds    a_{k,k}\leq b a_{k,k+1}+bK|\pi|^\g.\\
\ea
\end{equation}
Then, the following holds true:
\begin{equation}\label{kk04}
\ba{c}
\ds a_{k,k}\leq b^{N-k-1}a_{k,N-1}+b^{N-k}|\pi|\sum_{l=0}^{N-k-3}(1+b|\pi|)^l a_{k+1+l,N-1}
              +bK|\pi|^\g \sum_{l=0}^{N-k-2}b^l (1+b|\pi|)^{l},\\
\ds a_{k,j}\leq  b^{N-1-j}a_{k,N-1}+b^{N-j}|\pi|\sum_{l=0}^{N-j-2}(1+b|\pi|)^l a_{j+l,N-1}
              +K|\pi|^\g \sum_{l=1}^{N-j-1}b^l (1+b|\pi|)^{l}.
\ea
\end{equation}


\end{lemma}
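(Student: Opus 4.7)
For part (1), the strategy is a standard discrete Gronwall argument via partial sums. Set $S_i := \sum_{k=i}^{N-1} a_k$ (with the convention $S_N = 0$). The hypothesis $a_i \leq b_i + c_0 S_{i+1}$ implies $S_i = a_i + S_{i+1} \leq b_i + (1+c_0) S_{i+1}$, a scalar backward recursion. Iterating this from $i$ to $N-1$ yields $S_i \leq \sum_{k=i}^{N-1}(1+c_0)^{k-i} b_k$, and substituting back into $a_i \leq b_i + c_0 S_{i+1}$ gives \eqref{gronwall1} at once.

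For part (2), my plan is a two-stage reduction: first collapse the coupled system \eqref{kk03} to a closed one-dimensional backward recursion on the diagonal $\alpha_k := a_{k,k}$, then invoke part (1) after a suitable rescaling. In Step A, I iterate the first inequality in \eqref{kk03} backward in $j$, treating $a_{k,N-1}$ as the terminal value, to obtain
\begin{equation*}
a_{k,j} \leq b^{N-1-j} a_{k,N-1} + |\pi| \sum_{l=0}^{N-j-2} b^{l+1} a_{j+l,j+l} + K|\pi|^\g \sum_{l=1}^{N-j-1} b^l ,
\end{equation*}
which still contains diagonal entries $a_{j+l,j+l}$ on the right. In Step B, I specialize Step A to $j=k+1$ and plug it into the second inequality of \eqref{kk03}. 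After reindexing via $m=k+1+l$, this produces the closed backward recursion
\begin{equation*}
\alpha_k \leq T_k + |\pi| \sum_{m=k+1}^{N-2} b^{m-k+1} \alpha_m , \qquad T_k := b^{N-k-1} a_{k,N-1} + K|\pi|^\g \sum_{l=0}^{N-k-2} b^{l+1} .
\end{equation*}

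In Step C, the $m$-dependent coefficient $b^{m-k+1}$ prevents a direct application of part (1); I therefore rescale by $\widehat\alpha_k := b^{-(N-k)} \alpha_k$, which exactly cancels the $m$-dependence and converts the recursion into $\widehat\alpha_k \leq b^{-(N-k)} T_k + b|\pi| \sum_{m=k+1}^{N-1} \widehat\alpha_m$ (after absorbing the boundary term $\widehat\alpha_{N-1}$, controlled by the $k=N-1$ instance of the diagonal inequality). Part (1) with $c_0 = b|\pi|$ then yields
\begin{equation*}
\widehat\alpha_k \leq b^{-(N-k)} T_k + b|\pi| \sum_{m=k+1}^{N-1}(1+b|\pi|)^{m-k-1} b^{-(N-m)} T_m ,
\end{equation*}
and multiplying through by $b^{N-k}$, substituting the definition of $T_m$, and reindexing $l=m-k-1$ recovers the first inequality of \eqref{kk04}. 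In Step D, the second inequality of \eqref{kk04} (for general $j$) follows by plugging this diagonal bound back into the formula derived in Step A. The main obstacle is bookkeeping in Steps B and C: tracking the powers of $b$ through the rescaling, reconciling the constant-source pieces $K|\pi|^\g \sum_l b^{l+1}$ emitted by Step A against the target geometric sum $bK|\pi|^\g \sum_l b^l (1+b|\pi|)^l$, and aligning summation ranges so that the final expression coincides exactly with \eqref{kk04}; each individual step is routine, but the algebraic manipulations must be executed with care.
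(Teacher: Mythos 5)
Your part (1) is correct and is essentially the paper's own argument in partial-sum form (the paper simply unrolls the recursion by induction), so nothing to add there. For part (2), your reduction --- unroll the off-diagonal recursion, close a one-dimensional backward recursion on the diagonal, rescale by $b^{-(N-k)}$ and invoke part (1) --- is sound in outline and indeed more explicit than the paper, which only asserts that \eqref{kk04} ``can also be proved by induction''.

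However, as written Step C does not deliver the stated inequality. The closed recursion from Step B is $\alpha_k \le T_k + |\pi|\sum_{m=k+1}^{N-2} b^{m-k+1}\alpha_m$: the index $m=N-1$ never occurs, because the unrolling in Step A stops at the terminal value $a_{k,N-1}$, so the last diagonal entry picked up is $a_{N-2,N-2}$. By enlarging the sum to $m=N-1$ before applying part (1) you add the term $b|\pi|(1+b|\pi|)^{N-k-2}b^{N-1-k}T_{N-1}=b^{N-k}|\pi|(1+b|\pi|)^{N-k-2}a_{N-1,N-1}$ to the final bound; your $a$-sum then runs to $l=N-k-2$ instead of $l=N-k-3$, which is strictly weaker than the first line of \eqref{kk04} (and the same extra term propagates through Step D into the second line). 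Moreover, the parenthetical claim that $\widehat\alpha_{N-1}$ is ``controlled by the $k=N-1$ instance of the diagonal inequality'' is vacuous: that instance involves $a_{N-1,N}$, which is not among the hypotheses \eqref{kk03}. The repair is immediate: since $\alpha_{N-1}$ never enters, apply part (1) on the truncated index set $\{0,\dots,N-2\}$ (i.e.\ with $N$ replaced by $N-1$; the required inequality at the top index is $\alpha_{N-2}\le T_{N-2}$, which is exactly the diagonal hypothesis at $k=N-2$). Unscaling then gives $\alpha_k\le T_k+b|\pi|\sum_{m=k+1}^{N-2}(1+b|\pi|)^{m-k-1}b^{m-k}T_m$, and the bookkeeping you defer does close: with $x=b|\pi|$ the elementary identity $\sum_{l=0}^{n}b^l(1+x)^l=\sum_{l=0}^{n}b^l+x\sum_{p=0}^{n-1}b^{p+1}(1+x)^{p}\sum_{l=0}^{n-1-p}b^l$ shows the constant-source pieces recombine into $bK|\pi|^\gamma\sum_{l=0}^{N-k-2}b^l(1+b|\pi|)^l$, so the corrected Step C yields exactly the first line of \eqref{kk04}, and substituting it back into the Step A formula (using the analogous identity) yields exactly the second.
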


\begin{proof}
We prove \eqref{gronwall1}
by induction,
\begin{equation*}\label{abc1}
\begin{aligned}
a_{N-1}\leq& b_{N-1};\\
a_{N-2}\leq& b_{N-2}+c_0b_{N-1};\\
a_{N-3}\leq& b_{N-3}+c_0b_{N-2}+c_0(c_0+1)b_{N-1};\\
a_{N-4}\leq& b_{N-4}+c_0b_{N-3}+c_0(c_0+1)b_{N-2}+c_0(c_0+1)^2b_{N-1};\\
&\cdots\\
a_i\leq &b_i+c_0\sum_{k=i+1}^{N-1}(c_0+1)^{k-i-1}b_k.
\end{aligned}
\end{equation*}
Hence we obtain \eqref{gronwall1}. \eqref{kk04} can also proved by induction.
\end{proof}

\ms

The following Lemma is about the regularity of $Y^{\pi(\cdot)}(\cdot)$.

\begin{lemma}\label{regyk}
Suppose that {\rm(A1)--(A4)} hold true.  Then, for any $k=0,1,\cdots,N-1$, $k\leq j\leq N-1$ 
and $t\in [t_j,t_{j+1}]$, there exists a constant $C$, depending only on $L$ and $T$, such that
\begin{equation}\label{april05}
\begin{aligned}
\me\big(|Y^k(t)-Y^k(t_j)|^2+|Y^k(t)-Y^k(t_{j+1})|^2\big)\leq C|\pi|.
\end{aligned}
\end{equation}
\end{lemma}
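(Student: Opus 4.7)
I fix $k$ and prove the estimate by analyzing the BSDE \eqref{ass3} on each subinterval $[t_j,t_{j+1}]$ separately. The plan has three stages: (i) derive uniform $L^2$ bounds on $(Y^k(\cdot),Z^k(\cdot))$ via a backward induction on the block index together with the discrete Gronwall inequality of Lemma~\ref{gron}; (ii) upgrade this to a uniform pointwise bound $\sup_{s\in[t_k,T]}\me|Z^k(s)|^2\le C$ by Malliavin differentiation; (iii) insert these bounds into the standard BSDE computation on $[t_j,t_{j+1}]$ to obtain the $|\pi|$-modulus.

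\textbf{Step 1: Uniform $L^2$ bounds.} On $(t_l,t_{l+1}]$ with $l\ge k+1$, BSDE \eqref{ass3} reads
\[
Y^k(t)=Y^k(t_{l+1})+\int_t^{t_{l+1}}f(t_k,s,x(s),Y^l(s),Z^k(s))ds-\int_t^{t_{l+1}}Z^k(s)dW(s),
\]
which is a standard BSDE for $(Y^k,Z^k)$ in which $Y^l(\cdot)$ plays the role of a nonhomogeneous source. Applying the BSDE estimate of Lemma~\ref{aaaa} on each block and propagating backward from $l=N-1$ to $l=k$, the arising discrete inequality is precisely of the form \eqref{kk03}--\eqref{kk04} treated in Lemma~\ref{gron}. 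This gives $\sup_{s\in[t_k,T]}\me|Y^k(s)|^2+\me\int_{t_k}^T|Z^k(s)|^2ds\le C$, where $C$ depends only on $L,T$.

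\textbf{Step 2: Pointwise bound on $Z^k$.} Malliavin-differentiating \eqref{ass3} yields that $(D_rY^k,D_rZ^k)$ satisfies, on each $(t_l,t_{l+1}]$ with $r\le t_l$, a linear BSDE whose inhomogeneity is built from $f_x\,D_rx(s)$, $f_y\,D_rY^l(s)$ (coming from the driver at time $s\in(t_l,t_{l+1}]$) and whose terminal data at $T$ is $D_rg(t_k,x(T))=g_x\,D_rx(T)$. By Lemma~\ref{sviex}, $\sup_{r\le s\le T}\me|D_rx(s)|^2\le C$; moreover, $D_rY^l$ is controlled by a completely analogous backward induction. Invoking Lemma~\ref{gron} once more yields
\[
\sup_{0\le r\le s\le T}\me|D_rY^k(s)|^2\le C.
\]
Taking $r=s$ and using $Z^k(s)=D_sY^k(s)$ a.s., we conclude $\sup_{s\in[t_k,T]}\me|Z^k(s)|^2\le C$.

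\textbf{Step 3: Hölder modulus on $[t_j,t_{j+1}]$.} For $t\in[t_j,t_{j+1}]$ and $l=j$ if $j\ge k+1$, $l=k$ if $j=k$,
\[
Y^k(t)-Y^k(t_{j+1})=\int_t^{t_{j+1}}f(t_k,s,x(s),Y^l(s),Z^k(s))ds-\int_t^{t_{j+1}}Z^k(s)dW(s).
\]
Squaring, taking expectation, and using Cauchy--Schwarz on the $ds$-term together with the linear growth of $f$, Step 1, and the pointwise bound on $\me|Z^k(s)|^2$ from Step 2, one obtains
\[
\me|Y^k(t)-Y^k(t_{j+1})|^2\le 2|\pi|\int_{t_j}^{t_{j+1}}\me|f(\cdots)|^2ds+2\int_{t_j}^{t_{j+1}}\me|Z^k(s)|^2ds\le C|\pi|.
\]
A symmetric argument on $[t_j,t]$ (with $Y^k(t_j)$ as the left endpoint in the same BSDE, continuity across $t_j$ being part of \eqref{ass3}) gives $\me|Y^k(t)-Y^k(t_j)|^2\le C|\pi|$, and combining the two yields \eqref{april05}.

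\textbf{Main obstacle.} The delicate point is Step~2: the Malliavin-differentiated system is coupled across the indices $k$ (through the $f_y\,D_rY^l$ source), so a naive block-by-block estimate loses a factor proportional to $N=T/|\pi|$. The whole reason Lemma~\ref{gron}(2) is stated the way it is, with an inequality of the form $a_{k,j}\le b\,a_{k,j+1}+b|\pi|a_{j,j}+bK|\pi|^{\gamma}$, is precisely to absorb this coupling, and I expect to use it in exactly the same combinatorial fashion here. Once that uniform-in-$s$ bound on $\me|Z^k(s)|^2$ is in hand, the remainder of the argument is a routine BSDE computation.
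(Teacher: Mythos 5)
Your proposal is correct and follows essentially the same route as the paper's proof: uniform bounds on $\me|Y^k(t)|^2$ obtained block-by-block with the discrete Gronwall inequality of Lemma~\ref{gron}, then a pointwise bound on $\me|Z^k(t)|^2$ through the Malliavin-differentiated BSDE and the identity $Z^k(t)=D_tY^k(t)$, and finally the elementary increment estimate on $[t_j,t_{j+1}]$. The coupling through the $f_y\,D_\th Y^j$ term that you flag as the main obstacle is handled in the paper exactly as you anticipate, via the $a_{k,j}\le b\,a_{k,j+1}+b|\pi|a_{j,j}+bK|\pi|^{\gamma}$ form of Lemma~\ref{gron}(2).
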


\begin{proof}
For any $t\in [t_j,t_{j+1}]$,  by ESDE \eqref{ass3}, it is easy to see that
\begin{equation}\label{bai1}
\begin{aligned}
   &\me|Y^k(t)-Y^k(t_j)|^2\leq 2\me\int_{t_j}^t |f(t_k,s,x(s),Y^j(s),Z^k(s))|^2ds(t-t_j)+2\me\int_{t_j}^t |Z^k(s)|^2ds\\
\leq&8L^2\me\int_{t_j}^t\big(|f(t_k,s,0,0,0)|^2+|x(s)|^2+|Y^j(s)|^2+|Z^k(s)|^2\big)ds(t-t_j)+2\me\int_{t_j}^t|Z^k(s)|^2ds\\
\leq&K(t-t_j)+8L^2\me\int_{t_j}^t|Y^j(s)|^2ds(t-t_j)+\big(8L^2(t-t_j)+2\big)\me\int_{t_j}^t |Z^k(s)|^2ds.
\end{aligned}
\end{equation}

We now estimate $\me|Y^k(t)|^2$, for $k=0,1,\cdots,N-1$ and $t\in [t_k,t_{k+1}]$, 
which appears on the right side of \eqref{bai1}.
By It\^o's formula, 
\begin{equation*}\label{bai3}
\begin{aligned}
&\me|Y^k(t)|^2+\me\int_t^{t_{k+1}}|Z^k(s)|^2ds\\
\leq&\me|Y^k(t_{k+1})|^2+\me\int_t^{t_{k+1}}\Big((2L+3L^2+1)|Y^k(s)|^2ds
            +|f(t_k,s,0,0,0)|^2+|x(s)|^2+\frac 1 2|Z^k(s)|^2\Big)ds\\
\leq&\me|Y^k(t_{k+1})|^2+(2L+3L^2+1)\me\int_t^{t_{k+1}}|Y^k(s)|^2ds
           +(L^2+K)(t_k-t)+\frac 1 2\me\int_t^{t_k}|Z^k(s)|^2ds.\\
\end{aligned}
\end{equation*}
Consequently, by Gronwall's inequality,
\begin{equation}\label{bai4}
\begin{aligned}
\me|Y^k(t)|^2\leq e^{(2L+3L^2+1)(t_{k+1}-t)}\Big(\me|Y^k(t_{k+1})|^2+(L^2+K)|\pi|\Big).\\
\end{aligned}
\end{equation}

Now, we estimate $\me|Y^k(t)|^2$, for any $t\in[t_j,t_{j+1}]$ $(k+1\leq j\leq N-1)$, which appears in \eqref{bai4}. With the similar 
calculus to that of  \eqref{bai4}, one can obtain
\begin{equation*}\label{bai5}
\begin{aligned}
&\me|Y^k(t)|^2+\me\int_t^{t_{j+1}}|Z^k(s)|^2ds\\
\leq&\me|Y^k(t_{j+1})|^2+2\me\int_t^{t_{j+1}}|Y^k(s)|\big(|f(t_k,s,0,0,0)|+L|x(s)|+L|Y^j(s)|+L|Z^k(s)|\big)ds\\
\leq&\me|Y^k(t_{j+1})|^2+(4L^2+1)\me\int_t^{t_{j+1}}|Y^k(s)|^2ds\\
       &+\me\int_t^{t_{j+1}}\big(|f(t_k,s,0,0,0)|^2+|x(s)|^2+|Y^j(s)|^2+\frac1 2|Z^k(s)|^2\big)ds\\
\leq&\me|Y^k(t_{j+1})|^2+(4L^2+1)\me\int_t^{t_{j+1}}|Y^k(s)|^2ds\\
     &+(L^2+K)(t_{j+1}-t)+\me\int_t^{t_{j+1}}|Y^j(s)|^2ds+\frac1 2\me\int_t^{t_{j+1}}|Z^k(s)|^2ds.
\end{aligned}
\end{equation*}
Also, by Gronwall's inequality, one has,
\begin{equation}\label{bai6}
\begin{aligned}
\me|Y^k(t)|^2\leq e^{(4L^2+1)(t_{j+1}-t)}\Big(\me|Y^k(t_{j+1})|^2+(L^2+K)|\pi|
      +\me\int_t^{t_{j+1}}|Y^j(s)|^2ds\Big).
\end{aligned}
\end{equation}
Set $\bar L =\max\{2L+3L^2+1,\,4L^2+1\},\,\, \a=e^{\bar L|\pi|},\,\, 
\bar K=L^2+K$, and $J_{k,j}=\ds \sup_{t_j\leq t<t_{j+1}}\me|Y^k(t)|^2,\,\ j\geq k+1.$
Then,
by \eqref{bai4}, \eqref{bai6} and Lemma  \ref{gron}, it comes out
\begin{equation}\label{bai9}
\begin{aligned}
J_{k,j}\leq& \a^{N-1-j}J_{k,N-1}+\a^{N-j}|\pi|\sum_{l=0}^{N-2-j}(1+\a |\pi|)^l J_{j+l,N-1}
+\bar K|\pi|\sum_{l=1}^{N-1-j}\a^l(1+\a|\pi|)^l.\\
\end{aligned}
\end{equation}
Since for all $k\leq N-2$,
\begin{equation}\label{bai10}
\begin{aligned}
   &J_{k,N-1}=\sup_{t_{N-1}\leq t\leq T}\me |Y^k(t)|^2\\
\leq& e^{\bar L|\pi|}\bigg(\me|g(t_k,x(T))|^2+\bar K|\pi|+\int_{t_{N-1}}^T|Y^{N-1}(s)|^2ds\bigg)\\
\leq &  e^{\bar L|\pi|}\Big(L^2T+L^2\me|x(T)|^2+\bar K|\pi|+|\pi|J_{N-1,N-1}\Big),\\
\end{aligned}
\end{equation}
and 
\begin{equation}\label{bai11}
\begin{aligned}
 &J_{N-1,N-1}=\sup_{t_{N-1}\leq t\leq T}\me |Y^{N-1}(t)|^2
 \leq e^{\bar L|\pi|}\Big(\me |g(t_{N-1},x(T))|^2+\bar K|\pi|\Big)\\
 \leq &e^{\bar L|\pi|}\Big(L^2T+L^2\me |x(T)|^2+\bar K|\pi|\Big)
 \leq C<\infty,
\end{aligned}
\end{equation}
\eqref{bai10}, together with \eqref{bai11}, yields that, for all $k=0,1,\cdots,N-1$,
\begin{equation}\label{bai12}
\begin{aligned}
 J_{k,N-1}
 \leq C<\infty.
\end{aligned}
\end{equation}
Now, we estimate the right side of \eqref{bai9} term by term.
\begin{equation}\label{bai13}
\begin{aligned}
\a^{N-i-j}\leq \a^N=e^{\bar L|\pi|N}=e^{\bar LT}<\infty,
\end{aligned}
\end{equation}
\begin{equation}\label{bai14}
\begin{aligned}
&\a^{N-j}|\pi|\sum_{l=0}^{N-2-j}(1+\a |\pi|)^l J_{j+l,N-1}
\leq  Ce^{\bar LT}\frac{(1+\a|\pi|)^{N}}{\a}\\
=    &C\big(1+\frac{Te^{\bar LT}}{N}\big)^N \leq  C e^{Te^{\bar L T}}<\infty,
\end{aligned}
\end{equation}
%
%
and
\begin{equation}\label{bai16}
\begin{aligned}
  &\bar K |\pi|\sum_{l=1}^{N-1-j}\a^l(1+\a|\pi|)^l
\leq  \bar K|\pi|\frac{\a^{N}(1+\a|\pi|)^{N}}{\a+\a^2|\pi|-1}\\
\leq & \bar K|\pi|\frac{\a^{N}(1+\a|\pi|)^{N}}{\bar L|\pi|}
\leq \frac {\bar K}{ \bar L} e^{\bar LT}e^{Te^{\bar LT}}<\infty.
\end{aligned}
\end{equation}
Hence, \eqref{bai9}, together with \eqref{bai13}--\eqref{bai16}, leads to, 
for any $k=0,1,\cdots,N-1$ and $j\geq k+1$,
\begin{equation}\label{bai17}
\begin{aligned}
J_{k,j}=\ds \sup_{t_j\leq t<t_{j+1}}\me|Y^k(t)|^2<\infty.
\end{aligned}
\end{equation}
Thereafter
$$J_{k,k}\leq \a J_{k,k+1}+\a \bar K|\pi|<\infty.$$
Furthermore, the second term of the right side in \eqref{bai1} turns into
\begin{equation}\label{bai18}
\begin{aligned}
\me\int_{t_j}^t|Y^j(s)|^2ds(t-t_j)\leq C|t-t_j|^2\leq C|t-t_j|.
\end{aligned}
\end{equation}

\ms

Now, we need to estimate $\me|Z^k(t)|^2$, for any $t\in [t_j, t_{j+1}]$, 
which appears on the third term of the right side in \eqref{bai1}.
Since $(D_{\th}Y^{\pi(\cdot)}(\cdot),D_{\th}Z^{\pi(\cdot)}(\cdot))$, the Malliavin derivative of 
$(Y^{\pi(\cdot)}(\cdot),Z^{\pi(\cdot)}(\cdot))$, satisfies the following BSDE (\cite[Proposition 5.3]{ElKaroui-Peng-Quenez97}): 
\begin{equation}\label{april1}
\left\{
\begin{aligned}
&D_{\th}Y^k(t_{j+1})-D_\th Y^k(t)\\
=&\int_t^{t_{j+1}}\Big(-f_x(t_k,s,x(s),Y^j(s),Z^k(s))D_\th x(s)
           -f_y(t_k,s,x(s),Y^j(s),Z^k(s))D_\th Y^j(s)\\
   &\q-f_z(t_k,s,x(s),Y^j(s),Z^k(s))D_\th Z^k(s)\Big)ds 
        +\int_t^{t_{j+1}}D_\th Z^k(s)dW(s),\q t\in[t_j,t_{j+1}],\,\theta\in [0,t],\\
&Z^k(t)=D_t Y^k(t),  t\in [t_k,T].
\end{aligned}
\right.
\end{equation}
By It\^o's formula, for any $t\in [t_k, t_{k+1}]$, one has
\begin{equation*}\label{april2}
\begin{aligned}
&\me|D_\th Y^k(t)|^2+\me\int_t^{t_{k+1}}|D_\th Z^k(s)|^2ds\\
\leq & \me\bigg\{|D_\th Y^k(t_{k+1})|^2 +(2L+3L^2)\int_t^{t_{k+1}}|D_\th Y^k(s)|^2ds\\
     &\q +\int_t^{t_{k+1}}|D_\th x(s)|^2ds+\frac 1 2\int_t^{t_{k+1}}|D_\th Z^k(s)|^2ds\bigg\},
\end{aligned}
\end{equation*}
by Gronwall's inequality, which deduces that, 
$$\me|D_\th Y^k(t)|^2\leq e^{(2L+3L^2)(t_{k+1}-t)}\Big(\me|D_\th Y^k(t_{k+1})|^2+K|\pi|\Big).$$
Similarly, for any $t\in [t_j, t_{j+1}]$,
\begin{equation}\label{april200}
\begin{aligned}
&\me|D_\th Y^k(t)|^2+\me\int_t^{t_{j+1}}|D_\th Z^k(s)|^2ds\\
\leq & \me\bigg\{|D_\th Y^k(t_{j+1})|^2 +4L^2\int_t^{t_{j+1}}|D_\th Y^k(s)|^2ds\\
      &\q+\int_t^{t_{j+1}}\big(|D_\th x(s)|^2+|D_\th Y^j(s)|^2\big)ds+\frac 1 2\int_t^{t_{j+1}}|D_\th Z^k(s)|^2ds\bigg\}.
\end{aligned}
\end{equation}
Therefore, similar to \eqref{bai17}, by virtue of Lemma \ref{gron}, we has
\begin{equation}\label{april3}
\begin{aligned}
\sup_{t_j\leq t\leq t_{j+1}}\me |D_\th Y^k(t)|^2<\infty.
\end{aligned}
\end{equation}
By setting $\th=t$ in \eqref{april3}, one gets, for any $t\in [t_k, T]$,
\begin{equation}\label{april4}
\begin{aligned}
\me|Z^k(t)|^2=\me |D_t Y^k(t)|^2<\infty.
\end{aligned}
\end{equation}
\eqref{bai1}, together with \eqref{bai18} and \eqref{april4}, yields that
\begin{equation}\label{april5}
\begin{aligned}
\me|Y^k(t)-Y^k(t_j)|^2\leq C|t-t_j|\leq C|\pi|.
\end{aligned}
\end{equation}
Similarly, we can get 
$$\me|Y^k(t)-Y^k(t_{j+1})|^2\leq C|t-t_{j+1}|\leq C|\pi|.$$
That completes the proof.
\end{proof}


With this result at hand we can conclude:

\begin{proposition}\label{yyk}
Suppose that {\rm(A1)--(A4)} hold true. Then, there exists a constant $K$,  such that, for any $k=0,1,\cdots,N-1$, 
$$\me|Y(t_k)-Y^k(t_k)|^2+\me\int_{t_k}^T|Z(t_k,s)-Z^k(s)|^2ds\leq K|\pi|.$$

\end{proposition}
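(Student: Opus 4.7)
My plan is to reduce the estimate to a standard BSDE stability argument on $[t_k,T]$ by introducing an auxiliary continuous semimartingale $M$ that agrees with $Y(t_k)$ at time $t_k$. Define
$$M(s) := g(t_k,x(T)) + \int_s^T f(t_k,u,x(u),Y(u),Z(t_k,u))\,du - \int_s^T Z(t_k,u)\,dW(u),\q s\in[t_k,T].$$
By the BSVIE \eqref{bsvie1} evaluated at $t=t_k$ one has $M(t_k)=Y(t_k)$, and clearly $M(T)=g(t_k,x(T))=Y^k(T)$, so the terminal difference vanishes. Writing
$$\widetilde Y(u):=Y^k(u)\text{ for }u\in[t_k,t_{k+1}],\qquad \widetilde Y(u):=Y^{\pi(u)}(u)\text{ for }u\in(t_{k+1},T],$$
the BSDE \eqref{ass3} integrated on $[s,T]$ becomes $Y^k(s)=g(t_k,x(T))+\int_s^T f(t_k,u,x(u),\widetilde Y(u),Z^k(u))\,du-\int_s^T Z^k(u)\,dW(u)$, so the pair $(M-Y^k,\,Z(t_k,\cdot)-Z^k)$ solves a linear BSDE on $[t_k,T]$ with vanishing terminal value and Lipschitz driver $\Delta f(u):=f(t_k,u,x(u),Y(u),Z(t_k,u))-f(t_k,u,x(u),\widetilde Y(u),Z^k(u))$, satisfying $|\Delta f(u)|\le L|Y(u)-\widetilde Y(u)|+L|Z(t_k,u)-Z^k(u)|$.

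Next I would apply It\^o's formula to $|M(s)-Y^k(s)|^2$ on $[s,T]$ and exploit the vanishing terminal value together with Young's inequality (choosing the cross-term constant so that the $Z$-difference term is absorbed with coefficient $1/2$) to obtain
$$\me|M(s)-Y^k(s)|^2+\tfrac{1}{2}\me\int_s^T|Z(t_k,u)-Z^k(u)|^2\,du\le C\me\int_s^T|M(u)-Y^k(u)|^2\,du+L\me\int_s^T|Y(u)-\widetilde Y(u)|^2\,du.$$
A backward Gronwall inequality in $s$ then upgrades this to $\me|M(s)-Y^k(s)|^2\le K\me\int_{t_k}^T|Y(u)-\widetilde Y(u)|^2\,du$ uniformly in $s\in[t_k,T]$, and substituting back (at $s=t_k$) yields
$$\me|Y(t_k)-Y^k(t_k)|^2+\me\int_{t_k}^T|Z(t_k,u)-Z^k(u)|^2\,du\le K\me\int_{t_k}^T|Y(u)-\widetilde Y(u)|^2\,du.$$

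Finally I would bound the right-hand side by splitting $[t_k,T]$ at $t_{k+1}$. On $(t_{k+1},T]$, where $\widetilde Y(u)=Y^{\pi(u)}(u)$, Theorem \ref{yong1} directly gives $\me\int_{t_{k+1}}^T|Y(u)-Y^{\pi(u)}(u)|^2\,du\le K|\pi|$. On the short interval $[t_k,t_{k+1}]$ the triangle decomposition
$$|Y(u)-Y^k(u)|^2\le 3|Y(u)-Y(t_k)|^2+3|Y(t_k)-Y^k(t_k)|^2+3|Y^k(u)-Y^k(t_k)|^2,$$
combined with Lemma \ref{estimateyz} for the first term and Lemma \ref{regyk} for the third, gives upon integration $\me\int_{t_k}^{t_{k+1}}|Y(u)-Y^k(u)|^2\,du\le C|\pi|^2+3|\pi|\me|Y(t_k)-Y^k(t_k)|^2$. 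Plugging this back produces
$$\me|Y(t_k)-Y^k(t_k)|^2+\me\int_{t_k}^T|Z(t_k,u)-Z^k(u)|^2\,du\le K|\pi|+C|\pi|\me|Y(t_k)-Y^k(t_k)|^2,$$
and for $|\pi|$ sufficiently small (depending only on $L$ and $T$) the circular term on the right is absorbed into the left, yielding the claim. The main subtlety is precisely this circularity on $[t_k,t_{k+1}]$: the very quantity $\me|Y(t_k)-Y^k(t_k)|^2$ we are trying to bound reappears on the right, and the argument closes only because the regularity Lemmas \ref{estimateyz} and \ref{regyk} make its prefactor $O(|\pi|)$.
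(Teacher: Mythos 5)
Your argument is correct, but it reaches the estimate by a route that differs from the paper's in one essential place. The first half is essentially the same content in different clothing: where you introduce the auxiliary process $M$ with $M(t_k)=Y(t_k)$, $M(T)=Y^k(T)$, and run It\^o's formula plus Young and backward Gronwall on $|M-Y^k|^2$, the paper simply invokes the stability estimate of \cite[Corollary 3.6]{Yong08} with the drivers $h(t,s,z)=f(t,s,x(s),Y(s),z)$ and $\bar h(t,s,z)=f(\t(t),s,x(s),Y^{\pi(s)}(s),z)$; both yield the same intermediate bound of the error by $\me\int_{t_k}^T|Y(u)-Y^{\pi(u)}(u)|^2du$ (your derivation is just a self-contained proof of that stability inequality). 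The genuine divergence is in how this right-hand side is controlled. The paper decomposes $|Y(s)-Y^j(s)|$ on \emph{every} subinterval $[t_j,t_{j+1}]$, $j\ge k$, via Lemma \ref{estimateyz} and Lemma \ref{regyk}, which produces the coupled system of inequalities in the grid errors $a_j=\me|Y(t_j)-Y^j(t_j)|^2$ and is closed by the discrete Gronwall Lemma \ref{gron}; in particular the paper's proof of this proposition does not use Theorem \ref{yong1} at all. You instead invoke Theorem \ref{yong1} to dispose of the tail $(t_{k+1},T]$ in one stroke (legitimate, since the integrand is nonnegative and that theorem is an external input from \cite{Yong16}, so no circularity with the present proposition), and only on $[t_k,t_{k+1}]$ do you perform the three-term split, leaving a single self-referential term $C|\pi|\,\me|Y(t_k)-Y^k(t_k)|^2$ that is absorbed for small $|\pi|$ --- the same smallness restriction the paper also imposes ($K|\pi|\le 1/2$). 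What each buys: your version is shorter and avoids the grid-wide bookkeeping and the discrete Gronwall, at the price of leaning on the quoted convergence result of Theorem \ref{yong1}; the paper's version is independent of that theorem, giving a pointwise-in-$t_k$ estimate whose proof rests only on the Yong stability estimate and the regularity lemmas, which is why it can be (and is) stated before the pieces of Theorem \ref{convergence} are assembled.
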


\begin{proof}
Setting $h(t,s,z)=f(t,s,x(s),Y(s),z)$ and $\bar h(t,s,z)=f(\t(t),s,x(s),Y^{\pi(s)}(s),z)$, 
by \cite[Corrolary 3.6]{Yong08}, we have 
\begin{equation}\label{esti5}
\begin{aligned}
&\me|Y(t_k)-Y^k(t_k)|^2+\me\int_{t_k}^T|Z(t_k,s)-Z^k(s)|^2ds\\
\leq & K\me\Big(\int_{t_k}^T\big|f(t_k,s,x(s),Y(s),Z(t_k,s))-f(\t(t_k),s,x(s),Y^{\pi(s)}(s),Z(t_k,s))\big|ds\Big)^2\\
\leq& K\sum_{j=k}^{N-1}\me\int_{t_{j}}^{t_{j+1}}\big(|Y(s)-Y(t_j)|^2+|Y^j(s)-Y^j(t_j)|^2+|Y(t_j)-Y^j(t_j)|^2\big)ds\\
\leq& K|\pi|+K|\pi|\sum_{j=k+1}^{N-1}\me|Y(t_j)-Y^j(t_j)|^2+K|\pi|\me|Y(t_k)-Y^k(t_k)|^2.\\
\end{aligned}
\end{equation}
Here we apply Lemma \ref{estimateyz} and Lemma \ref{regyk}.
Taking $\ds N> 2KT\, (N\in\dbN)$, then $\ds K|\pi|\leq \frac{1}{2}$, and
\begin{equation}\label{esti6}
\begin{aligned}
&\frac{1}{2}\me|Y(t_k)-Y^k(t_k)|^2
\leq K|\pi|+ K|\pi|\sum_{j=k+1}^{N-1}\me|Y(t_j)-Y^j(t_j)|^2.\\
\end{aligned}
\end{equation}

Denote $\ds a_k=\me|Y(t_k)-Y^k(t_k)|^2$, $b_i=2K|\pi|$ and $c_0=2K|\pi|$.
Therefore, by Lemma \ref{gron}, for any $k=0,1,\cdots, N-1$,
\begin{equation}\label{conver2}
\begin{aligned}
&\me|Y(t_k)-Y^k(t_k)|^2=
a_k\leq b_k+c_0\sum_{i=k+1}^{N-1}(1+c_0)^{i-k-1}b_i\\
\leq &2K|\pi|+2K|\pi|(c_0+1)^N
\leq 2K|\pi|+2K|\pi| e^{2KT}
\leq K|\pi|.
\end{aligned}
\end{equation}
Combining \eqref{esti5} with \eqref{conver2}, we can have $\ds \me\int_{t_k}^T|Z(t_k,s)-Z^k(s)|^2ds\leq K|\pi|$.
That completes the proof.
\end{proof}

\ms

In the following part, we mainly provide the regularity of $Z^{\pi(\cdot)}(\cdot)$. Such a regularity, 
combining with that for $x(\cdot)$ and $Y^{\pi(\cdot)}(\cdot)$, can derive the rate of convergence of the 
Euler method \eqref{ass1}.
 We present that regularity in two different 
cases: (I) $f=f(t,s,x,y)$; (II) $f=f(t,s,x,z)$. Here, we borrow some idea
from \cite{Hu-Nualart-Song11}.

\begin{lemma}\label{regzka}
Suppose that $f=f(t,s,x,y)$ in BSDE \eqref{ass3}, and  {\rm(A1)--(A4)} hold true. 
Then, for any $k=0,1,\cdots,N-1$, $k\leq j\leq N-1$ 
and $s\in [t_j,t_{j+1}]$, there exists a constant $C$, such that
\begin{equation}\label{april06a}
\begin{aligned}
\me|Z^k(s)-Z^k(t_j)|^2\leq K|\pi|.
\end{aligned}
\end{equation}
\end{lemma}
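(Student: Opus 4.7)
The key observation is that because $f = f(t,s,x,y)$ does not depend on $z$, the Malliavin BSDE \eqref{april1} contains no $f_z D_\theta Z^k$ term, so iterating across partition points (using continuity of $Y^k$ at the nodes $t_l$, $l>k$) and taking conditional expectations produces the explicit representation
\begin{equation*}
D_\theta Y^k(s) = \me\bigg[D_\theta g(t_k, x(T)) + \int_s^T \big(f_x D_\theta x(u) + f_y D_\theta Y^{\pi(u)}(u)\big)\,du\,\bigg|\,\mf_s\bigg], \qquad \theta\le s,
\end{equation*}
and in particular $Z^k(\theta) = D_\theta Y^k(\theta)$. Without this $z$-absence, the Malliavin BSDE would be a genuine BSDE in $D_\theta Z^k$ and the clean formula would be lost.

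With this in hand, I would decompose
\begin{equation*}
Z^k(s) - Z^k(t_j) = \underbrace{[D_s Y^k(s) - D_{t_j} Y^k(s)]}_{\text{(I)}} + \underbrace{[D_{t_j} Y^k(s) - D_{t_j} Y^k(t_j)]}_{\text{(II)}}.
\end{equation*}
Applying the representation with $\theta = s$ and $\theta = t_j$, both conditioned on $\mf_s$, the common multipliers $f_x, f_y$ factor out and
\begin{equation*}
\text{(I)} = \me\bigg[(D_s g - D_{t_j}g) + \int_s^T\!\big(f_x (D_s x(u) - D_{t_j}x(u)) + f_y (D_s Y^{\pi(u)}(u) - D_{t_j} Y^{\pi(u)}(u))\big)\,du\,\bigg|\,\mf_s\bigg].
\end{equation*}
Jensen's inequality, the Lipschitz bounds (A1)--(A2), and the third inequality of Lemma \ref{sviex} reduce $\me|\text{(I)}|^2\le C|\pi|$ to the Malliavin-index regularity $\me|D_s Y^{\pi(u)}(u) - D_{t_j} Y^{\pi(u)}(u)|^2 \le C|\pi|$.

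For part (II), integrating the BSDE \eqref{april1} over $[t_j,s]\subset[t_j,t_{j+1}]$ gives
\begin{equation*}
\text{(II)} = -\int_{t_j}^s \big(f_x D_{t_j} x(u) + f_y D_{t_j} Y^j(u)\big)\,du + \int_{t_j}^s D_{t_j} Z^k(u)\,dW(u).
\end{equation*}
Cauchy--Schwarz controls the drift by $C|\pi|^2$ using the uniform $L^2$-bound on $D_{t_j}x$ from Lemma \ref{sviex} and the first-Malliavin analogue \eqref{april3} of Lemma \ref{regyk} for $D_{t_j}Y^j$; It\^o's isometry gives $\me\int_{t_j}^s|D_{t_j}Z^k|^2\,du \le C|\pi|$ once a uniform bound $\sup_u\me|D_{t_j}Z^k(u)|^2\le C$ is available, and this last bound follows by applying the standard BSDE estimate to the second-order Malliavin BSDE for $D_{t_j}D_uY^k = D_{t_j}Z^k(u)$, using the $L^{p_0}$-control of $D_{\theta_1}D_{\theta_2}x$ in (A4) together with the second-order bounds on $f,g$ in (A1)--(A2).

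The main obstacle is establishing the auxiliary estimate $\me|D_{\theta_1} Y^{\pi(u)}(u) - D_{\theta_2} Y^{\pi(u)}(u)|^2 \le C|\theta_1-\theta_2|$, which is the BSDE analogue of the third inequality of Lemma \ref{sviex}. I would prove it by writing the BSDE satisfied by the difference $D_{\theta_1}Y^k - D_{\theta_2}Y^k$: its terminal condition differs by $g_x(t_k,x(T))(D_{\theta_1}x(T) - D_{\theta_2}x(T)) = O(|\theta_1-\theta_2|^{1/2})$ in $L^2$ by Lemma \ref{sviex}, and its driver picks up a source term of the same order from the discrepancy between the initial values $\sigma(\cdot,\theta_i,x(\theta_i))$ of $D_{\theta_i}x$. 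Closing the estimate then follows the piecewise Gronwall iteration developed in Lemma \ref{regyk} (together with Lemma \ref{gron}), applied subinterval by subinterval across the partition.
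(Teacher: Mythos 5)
Your decomposition of $Z^k(s)-Z^k(t_j)$ into $(D_sY^k(s)-D_{t_j}Y^k(s))+(D_{t_j}Y^k(s)-D_{t_j}Y^k(t_j))$ is exactly the paper's \eqref{april7a}, and your treatment of the first piece --- writing the BSDE for $D_{\theta_1}Y^k-D_{\theta_2}Y^k$, feeding in the third inequality of Lemma \ref{sviex} and the $O(|\theta_1-\theta_2|^{1/2})$ terminal discrepancy, and closing by the partition-wise Gronwall recursion of Lemmas \ref{gron} and \ref{regyk} --- is in substance the paper's Step 1 (claim \eqref{cl1} via \eqref{april130a}); your extra detour through the conditional-expectation representation is harmless but unnecessary, since the difference estimate applied with $\theta_1=s$, $\theta_2=t_j$ already gives (I) directly. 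Where you genuinely diverge is the second piece: the paper uses the representation $D_\theta Y^k(t)=\me\big(D_\theta Y^k(T)+\int_t^TF\,ds\,\big|\,\mf_t\big)$ (valid precisely because $f_z\equiv0$) and controls the difference of conditional expectations at times $t$ and $t_j$ by two applications of the Clark--Ocone formula, reducing everything to $L^2$-bounds on $D_u D_\theta g(t_k,x(T))$ and on $D_u\int_{t_j}^T F\,ds$ (equations \eqref{april17a}--\eqref{april30a}); you instead integrate the Malliavin BSDE \eqref{april1} over $[t_j,s]$ and bound the stochastic integral by It\^o isometry plus a pointwise bound $\sup_u\me|D_{t_j}Z^k(u)|^2\le C$. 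Your route is more direct (no Clark--Ocone), but be aware of what it silently requires: the identification $D_{t_j}Z^k(u)=D_uD_{t_j}Y^k(u)$ is a second application of the El Karoui--Peng--Quenez representation, this time to the (linear) Malliavin BSDE, and the pointwise bound on its diagonal needs (a) fourth-moment bounds on $D_\theta x$ and $D_\theta Y^k$ to make the second-order driver square-integrable (the paper proves the latter separately in \eqref{april24a}--\eqref{april25a}), and (b) the cross-interval recursion of Lemma \ref{gron}, because the second-order BSDE on $[t_j,t_{j+1}]$ carries both the source $D_uD_{t_j}Y^j$ and a terminal value propagated backward from $T$ --- so a bare ``standard BSDE estimate'' on one subinterval does not suffice. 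Once those ingredients are spelled out (they are exactly the estimates \eqref{april25a} and \eqref{april28a} the paper assembles for its $I_{22}$ term), your argument closes and yields the same $K|\pi|$ rate; the paper's Clark--Ocone variant buys the small economy of never having to assert the diagonal representation of $D_\theta Z^k$, needing the second-order quantities only inside time integrals rather than pointwise.
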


\begin{proof}
We divide the proof into two steps.

{\bf Step 1.}
For any $k=0,1,\cdots,N-1$,
$k\leq j\leq N-1$ and $s\in [t_j,t_{j+1}]$,
by \eqref{april1}, one gets
\begin{equation}\label{april7a}
\begin{aligned}
&Z^k(s)-Z^k(t_j)=D_sY^k(s)-D_{t_j}Y^k(t_j)\\
=&\big(D_sY^k(s)-D_{t_j}Y^k(s)\big)+\big(D_{t_j}Y^k(s)-D_{t_j}Y^k(t_j)\big).
\end{aligned}
\end{equation}

 We claim that
 \begin{equation}\label{cl1}
 \begin{aligned}
 \me|D_sY^k(s)-D_{t_j}Y^k(s)|^2\leq K|\pi|.
 \end{aligned}
 \end{equation}
 Indeed,
 by \eqref{april1}, for $\th_1, \th_2\in [t_j,t_{j+1}]$, $\th_2\leq \th_1\leq s$,
\begin{equation*}\label{april8a}
\begin{aligned}
&\me|D_{\th_1}Y^k(s)-D_{\th_2}Y^k(s)|^2+\me\int_s^{t_{j+1}}|D_{\th_1}Z^k(t)-D_{\th_2}Z^k(t)|^2dt\\
=&\me|D_{\th_1}Y^k(t_{j+1})-D_{\th_2}Y^k(t_{j+1})|^2\\
   &+2\me\int_s^{t_{j+1}}\Big\lan  D_{\th_1}Y^k(t)-D_{\th_1}Y^k(t), f_x(D_{\th_1}x(t)-D_{\th_2}x(t))
       +f_y(D_{\th_1}Y^j(t)-D_{\th_2}Y^j(t))  \Big\ran dt\\
\leq &\me|D_{\th_1}Y^k(t_{j+1})-D_{\th_2}Y^k(t_{j+1})|^2
            +2L^2\me\int_s^{t_{j+1}}|D_{\th_1}Y^k(t)-D_{\th_2}Y^k(t)|^2dt\\
       &  +  \me\int_s^{t_{j+1}}|D_{\th_1}x(t)-D_{\th_2}x(t)|^2dt
           +\me\int_s^{t_{j+1}}|D_{\th_1}Y^j(t)-D_{\th_2}Y^j(t)|^2dt.\\
\end{aligned}
\end{equation*}
Hence, by Lemma \ref{sviex} and Gronwall's inequality,
 one has, for any $s\in [t_j,t_{j+1}]$,
\begin{equation*}\label{april12a}
\begin{aligned}
&\me|D_{\th_1}Y^k(s)-D_{\th_2}Y^k(s)|^2\\
\end{aligned}
\end{equation*}
\begin{equation*}
\begin{aligned}
\leq &e^{2L^2(t_{j+1}-s)}\Big(\me|D_{\th_1}Y^k(t_{j+1})-D_{\th_2}Y^k(t_{j+1})|^2
              + K|\th_1-\th_2|(t_{j+1}-s)\\
       &  +\me\int_s^{t_{j+1}}|D_{\th_1}Y^j(t)-D_{\th_2}Y^j(t)|^2dt\Big)\\
\leq &e^{2L^2(t_{j+1}-s)}\Big(\me|D_{\th_1}Y^k(t_{j+1})-D_{\th_2}Y^k(t_{j+1})|^2
              + K|\pi|^2\\
       &  +\me\int_s^{t_{j+1}}|D_{\th_1}Y^j(t)-D_{\th_2}Y^j(t)|^2dt\Big).\\
\end{aligned}
\end{equation*}
Similarly, for any $s\in [t_k,t_{k+1}]$,
\begin{equation*}\label{april13a}
\begin{aligned}
&\me|D_{\th_1}Y^k(s)-D_{\th_2}Y^k(s)|^2
\leq e^{2L^2(t_{k+1}-s)}\Big(\me|D_{\th_1}Y^k(t_{k+1})-D_{\th_2}Y^k(t_{k+1})|^2
              + K|\pi|^2\Big).\\
\end{aligned}
\end{equation*}
By Lemma \ref{gron}, with the similar procedure used in the proof of Lemma \ref{regyk},  
one can get,  for any $s\in [t_j,t_{j+1}]$,
\begin{equation}\label{april130a}
\begin{aligned}
&\sup_{t_j\leq s\leq t_{j+1}}\me|D_{\th_1}Y^k(s)-D_{\th_2}Y^k(s)|^2
\leq  K|\pi|.\\
\end{aligned}
\end{equation}
Setting $\th_1=s,\, \th_2=t_j$, one easily obtains \eqref{cl1}.

\ms

{\bf Step 2.} We claim that, for any $s\in [t_j, t_{j+1}]$, 
\begin{equation}\label{april14a}
\begin{aligned}
\me|D_{t_j}Y^k(s)-D_{t_j}Y^k(t_j)|\leq K|s-t_j|.
\end{aligned}
\end{equation}

For any $\th\leq t_j$, $t\in [t_j,t_{j+1}]$, by virtue of Eq. \eqref{ass3},
\begin{equation*}\label{april15a}
\begin{aligned}
D_\th Y^k(t)=&\me\Big(D_\th Y^k(T)+\int_t^T F(s)ds\Big|\mf_t\Big),\\
\end{aligned}
\end{equation*}
where
\begin{equation*}\label{april16a}
\begin{aligned}
\int_t^T F(s)ds=&\int_t^{t_{j+1}}f_x(t_k,s,x(s),Y^j(s))D_\th x(s)
          +f_y(t_k,s,x(s),Y^j(s))D_\th Y^j(s)ds\\
           &+\sum_{l=j+1}^{N-1}\int_{t_l}^{t_{l+1}}f_x(t_k,s,x(s),Y^l(s))D_\th x(s)
          +f_y(t_k,s,x(s),Y^l(s))D_\th Y^l(s)ds.\\
\end{aligned}
\end{equation*}
Then
\begin{equation}\label{april17a}
\begin{aligned}
&D_\th Y^k(t)-D_\th Y^k(t_j)\\
=&\bigg\{\me\big(D_\th Y^k(T)|\mf_t\big)-\me\big(D_\th Y^k(T)|\mf_{t_j}\big)\bigg\}
           +\bigg\{\me\Big(\int_t^T F(s)ds|\mf_t\Big)-\me\Big(\int_t^T F(s)ds|\mf_{t_j}\Big)\bigg\}\\
:=&I_1+I_2.\\
\end{aligned}
\end{equation}

For $I_1$, since
\begin{equation*}\label{april19a}
\begin{aligned}
D_\th Y^k(T)=\me D_\th Y^k(T)+\int_0^T\me\big(D_sD_\th Y^k(T)|\mf_s\big)dW(s),
\end{aligned}
\end{equation*}
by Lemma \ref{sviex}, one can have
\begin{equation}\label{april20a}
\begin{aligned}
&\me I_1^2=\me \int_{t_j}^t\Big|\me\big(D_sD_\th Y^k(T)|\mf_s\big)\Big|^2ds
   \leq  \me \int_{t_j}^t\big|D_sD_\th Y^k(T)\big|^2ds\\
    =   &    \me \int_{t_j}^t\big|D_sD_\th g(t_k,x(T))\big|^2ds
    =   \me \int_{t_j}^t\big|g_{xx}D_\th x(T)D_u x(T)+g_x D_sD_\th x(T)\big|^2ds
    \leq K|t-t_j|.
\end{aligned}
\end{equation}
%

For $I_2$,
\begin{equation}\label{april18a}
\begin{aligned}
I_2=&\bigg\{\me\Big(\int_t^T F(s)ds|\mf_t\Big)-\me\Big(\int_{t_j}^T F(s)ds|\mf_t\Big)\bigg\}\\
       &\qq+\bigg\{\me\Big(\int_{t_j}^T F(s)ds|\mf_t\Big)-\me\Big(\int_{t_j}^T F(s)ds|\mf_{t_j}\Big)\bigg\}\\
     :=&I_{21}+I_{22}.  
\end{aligned}
\end{equation}
By Lemma \ref{sviex} and \eqref{april3}, it is easy to check that
\begin{equation}\label{april21a}
\begin{aligned}
&\me I_{21}^2=\me \Big|\int_t^{t_{j+1}}\big(f_x(t_k,s,x(s),Y^j(s))D_\th x(s)
          +f_y(t_k,s,x(s),Y^j(s))D_\th Y^j(s)\big)ds \Big|^2\\
   \leq & K(t-t_j)\me \int_{t_j}^t\big(\big|D_\th x(s)\big|^2+\big|D_\th Y^j(s)\big|^2\big)ds             
              \leq K|t-t_j|.
\end{aligned}
\end{equation}
For the $I_{22}$ part,
by Clark-Ocone representation formula,
$$
\int_{t_j}^T F(s)ds=\me\Big(\int_{t_j}^T F(s)ds\Big)+\int_0^T\me\Big(D_u\int_{t_j}^T F(s)ds\Big|\mf_u\Big)dW(u),
$$
it admits the following representation:
$$ I_{22}=\int_{t_j}^t\me\Big(D_u\int_{t_j}^T F(s)ds\Big|\mf_u\Big)dW(u).$$
It is easy to check that
\begin{equation*}\label{april22a}
\begin{aligned}
&D_u\int_{t_j}^T F(s)ds=\int_{t_j}^T D_uF(s)ds\\
=&\sum_{l=j}^{N-1}\int_{t_l}^{t_{l+1}}D_u\big(f_x(t_k,s,x(s),Y^l(s))D_\th x(s)
          +f_y(t_k,s,x(s),Y^l(s))D_\th Y^l(s)\big)ds\\
=&\sum_{l=j}^{N-1}\int_{t_l}^{t_{l+1}}\Big(f_{xx}D_\th x(s)D_u x(s)+f_{xy}D_\th x(s)D_u Y^l(s)
             +f_xD_uD_\th x(s)\\
   &\qq\qq+  f_{yx}D_\th Y^l(s)D_u x(s)+f_{yy}D_\th Y^l(s)D_u Y^l(s)
             +f_yD_uD_\th Y^l(s)        \Big)ds.\\     
\end{aligned}
\end{equation*}
Therefore,
\begin{equation}\label{april23a}
\begin{aligned}
&\me\Big|D_u\int_{t_j}^T F(s)ds\Big|^2\leq T\me\int_{t_j}^T |D_uF(s)|^2ds\\
\leq&K \sum_{l=j}^{N-1}\me\int_{t_l}^{t_{l+1}}\Big(|D_\th x(s)D_u x(s)|^2+|D_\th x(s)D_u Y^l(s)|^2
             +|D_uD_\th x(s)|^2\\
   &+  |D_\th Y^l(s)D_u x(s)|^2+|D_\th Y^l(s)D_u Y^l(s)|^2
             +|D_uD_\th Y^l(s)|^2        \Big)ds\\
\leq&K \sum_{l=j}^{N-1}\me\int_{t_l}^{t_{l+1}}\Big(|D_\th x(s)|^4+|D_u x(s)|^4+|D_u Y^l(s)|^4
            +  |D_\th Y^l(s)|^4\\
   &\qq\qq +|D_uD_\th x(s)|^2
             +|D_uD_\th Y^l(s)|^2        \Big)ds.\\                  
\end{aligned}
\end{equation}
Now, we estimate each term on the right side of the above inequality. By It\^o's formula, 
\begin{equation*}\label{april24a}
\begin{aligned}
&\me|D_\th Y^k(t)|^4 +6\me\int_t^{t_{j+1}}|D_\th Y^k(s)|^2 |D_\th Z^k(s)|^2 ds  \\
=&\me|D_\th Y^k(t_{j+1})|^4+4\me\int_t^{t_{j+1}}|D_\th Y^k(s)|^2 
      \big\langle  D_\th Y^k(s),   f_xD_\th x(s)+f_y D_\th Y^j(s)\big\rangle ds\\  
\leq &\me|D_\th Y^k(t_{j+1})|^4+4L\me\int_t^{t_{j+1}}\Big( \frac 3 2|D_\th Y^k(s)|^4+4|D_\th x(s)|^4
          +4 |D_\th Y^j(s)|^4\Big) ds\\ 
\leq &\me|D_\th Y^k(t_{j+1})|^4+6L\me\int_t^{t_{j+1}}|D_\th Y^k(s)|^4ds 
        +K|\pi|+K\me\int_t^{t_{j+1}}|D_\th Y^j(s)|^4 ds.\\                                  
\end{aligned}
\end{equation*}
Thus, by Lemma \ref{gron}, one get
\begin{equation}\label{april25a}
\begin{aligned}
\sup_{t_j\leq t\leq t_{j+1}}\me|D_\th Y^k(t)|^4<\infty.                              
\end{aligned}
\end{equation}

\ms

For any $u\leq t_j\leq t\leq t_{j+1}$,
\begin{equation}\label{april26a}
\begin{aligned}
&D_uD_\th Y^k(t_{j+1})-D_uD_\th Y^k(t)   \\
=&\int_t^{t_{j+1}}\Big(f_{xx}D_\th x(s)D_u x(s)+f_{xy}D_\th x(s)D_u Y^j(s)
             +f_xD_uD_\th x(s)\\
   &+  f_{yx}D_\th Y^j(s)D_u x(s)+f_{yy}D_\th Y^j(s)D_u Y^j(s)
             +f_yD_uD_\th Y^j(s)  \Big)ds\\
       &+\int_t^{t_{j+1}}D_uD_\th Z^k(s)dW(s).\\                      
\end{aligned}
\end{equation}
Hence, by It\^o's formula,
\begin{equation*}\label{april27a}
\begin{aligned}
&\me|D_uD_\th Y^k(t)|^2+\me\int_t^{t_{j+1}}|D_uD_\th Z^k(s)|^2ds   \\
\leq &\me|D_uD_\th Y^k(t_{j+1})|^2+\me\int_t^{t_{j+1}}\Big(6L^2|D_uD_\th Y^k(s)|^2
            +|D_\th x(s)D_u x(s)|^2+|D_ \th x(s)D_u Y^j(s)|^2\\
      &  + |D_uD_\th x(s)|^2
        +  |D_\th Y^j(s)D_u x(s)|^2+|D_\th Y^j(s)D_u Y^j(s)|^2+
             |D_uD_\th Y^j(s)|^2  \Big)ds\\
\leq &\me|D_uD_\th Y^k(t_{j+1})|^2+6L^2\me\int_t^{t_{j+1}}|D_uD_\th Y^k(s)|^2ds
           +K|\pi|+\me\int_t^{t_{j+1}}  |D_uD_\th Y^j(s)|^2ds\\               
\end{aligned}
\end{equation*}
Also, by Lemma \ref{gron}, we have
\begin{equation}\label{april28a}
\begin{aligned}
\sup_{t_j\leq t\leq t_{j+1}}\me|D_uD_\th Y^k(t)|^2<\infty.           
\end{aligned}
\end{equation}
Therefore,
 \eqref{april23a}, together with \eqref{april25a} and \eqref{april28a}, yields that
\begin{equation*}\label{april29a}
\begin{aligned}
\me\Big|D_u\int_{t_j}^T F(s)ds\Big|^2<\infty.           
\end{aligned}
\end{equation*}
Furthermore,
\begin{equation}\label{april30a}
\begin{aligned}
&\me|I_{22}|^2=\me\int_{t_j}^t\Big|\me\Big(D_u\int_{t_j}^T F(s)ds\Big|\mf_u\Big)\Big|^2du
\leq  \me\int_{t_j}^t\Big|D_u\int_{t_j}^T F(s)ds\Big|^2du\leq K|t-t_j|.
\end{aligned}
\end{equation}

Finally, by \eqref{april17a}--\eqref{april21a} and  \eqref{april30a},
one gets
$$\me|D_\th Y_k(t)-D_\th Y_k(t_j)|^2\leq K|t-t_j|,$$
which deduces \eqref{april14a} by setting $\th=t_j$. 
Now combining \eqref{april7a} with \eqref{cl1} and \eqref{april14a}, we have the regularity 
of $Z$ \eqref{april06a}.
\end{proof}

\ms

The following regularity of $Z^{\pi(\cdot)}(\cdot)$ is in the case: $f=f(t,s,x,z)$.

\begin{lemma}\label{regzkb}
Suppose that $f=f(t,s,x,z)$ in BSDE \eqref{ass3}, and  {\rm(A1)--(A4)} hold true. 
Then, for any $k=0,1,\cdots,N-1$, $k\leq j\leq N-1$ 
and $s\in [t_j,t_{j+1}]$, there exists a constant $C$, such that
\begin{equation}\label{april06b}
\begin{aligned}
\me|Z^k(s)-Z^k(t_j)|^2\leq C|\pi|.
\end{aligned}
\end{equation}
\end{lemma}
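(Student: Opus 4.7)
The plan is to mirror the two-step decomposition of Lemma \ref{regzka}, writing
$$Z^k(s)-Z^k(t_j)=\big(D_sY^k(s)-D_{t_j}Y^k(s)\big)+\big(D_{t_j}Y^k(s)-D_{t_j}Y^k(t_j)\big),$$
and bounding each piece separately. The substantive change is that when $f=f(t,s,x,z)$, the Malliavin derivative equation (cf.\ \cite[Proposition 5.3]{ElKaroui-Peng-Quenez97}) has $f_z D_\theta Z^k$ in place of $f_y D_\theta Y^j$, i.e.,
$$D_\theta Y^k(t_{j+1})-D_\theta Y^k(t)=\int_t^{t_{j+1}}\!\big(-f_x D_\theta x(s)-f_z D_\theta Z^k(s)\big)ds+\int_t^{t_{j+1}}\!D_\theta Z^k(s)\,dW(s).$$

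For the first piece, I would apply Itô's formula to $|D_{\theta_1}Y^k-D_{\theta_2}Y^k|^2$ on $[t_j,t_{j+1}]$. The cross term $2\langle D_{\theta_1}Y^k-D_{\theta_2}Y^k,\,f_z(D_{\theta_1}Z^k-D_{\theta_2}Z^k)\rangle$ is dominated by $2L^2|D_{\theta_1}Y^k-D_{\theta_2}Y^k|^2+\tfrac12|D_{\theta_1}Z^k-D_{\theta_2}Z^k|^2$, and the latter is absorbed into the $\me\int_t^{t_{j+1}}|D_{\theta_1}Z^k-D_{\theta_2}Z^k|^2ds$ coming from the quadratic variation. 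Combined with $\me|D_{\theta_1}x(s)-D_{\theta_2}x(s)|^2\le K|\theta_1-\theta_2|$ from Lemma \ref{sviex}, Gronwall on each subinterval together with the recursive estimate in Lemma \ref{gron} yields $\sup_{t_j\le s\le t_{j+1}}\me|D_{\theta_1}Y^k(s)-D_{\theta_2}Y^k(s)|^2\le K|\theta_1-\theta_2|$; setting $\theta_1=s,\theta_2=t_j$ gives the analog of \eqref{cl1}.

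For the second piece, I would copy the Clark--Ocone decomposition $D_\theta Y^k(t)=\me(D_\theta Y^k(T)\mid\mathcal F_t)+\me(\int_t^T F(s)ds\mid\mathcal F_t)$ with $F(s)=f_xD_\theta x(s)+f_z D_\theta Z^k(s)$, and split $I_2=I_{21}+I_{22}$ exactly as in \eqref{april18a}. The terminal term $I_1$ is unchanged since $D_\theta Y^k(T)=D_\theta g(t_k,x(T))$ is independent of the $y,z$ structure of $f$. The $I_{21}$ term is routine once $\sup\me|D_\theta Z^k(s)|^2<\infty$ is established, which follows by repeating the energy argument leading to \eqref{april3} with the $f_z D_\theta Z^k$ term absorbed as above. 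For $I_{22}$, I would again use Clark--Ocone, where now $D_uF(s)$ produces $f_{xz}D_\theta x(s)D_u Z^k(s)$, $f_{zz}D_\theta Z^k(s)D_u Z^k(s)$, and crucially $f_zD_uD_\theta Z^k(s)$.

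The main obstacle is therefore showing $\sup_{t_j\le t\le t_{j+1}}\me|D_uD_\theta Y^k(t)|^2<\infty$ and $\me\int_{t_l}^{t_{l+1}}|D_uD_\theta Z^k(s)|^2ds<\infty$ uniformly, in the presence of the $f_z$ term in the second-order Malliavin BSDE. My plan is to derive the BSDE for $D_uD_\theta Y^k$ by differentiating \eqref{april1} adapted to $f=f(t,s,x,z)$, apply Itô's formula to $|D_uD_\theta Y^k|^2$, and handle the drift contribution $\langle D_uD_\theta Y^k,\,f_zD_uD_\theta Z^k\rangle$ via the same $2L^2/\tfrac12$ splitting, so that $\tfrac12\me\int_t^{t_{j+1}}|D_uD_\theta Z^k(s)|^2ds$ is absorbed on the left and the remaining quadratic terms (involving products of first-order Malliavin derivatives, which are finite by Lemma \ref{sviex} and \eqref{april3}) give an $L^\infty$-in-time bound via Gronwall plus Lemma \ref{gron}. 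Once $\me|D_u\int_{t_j}^TF(s)ds|^2$ is bounded, the Itô isometry yields $\me|I_{22}|^2\le K|t-t_j|$, completing the proof of \eqref{april14a} and hence \eqref{april06b}.
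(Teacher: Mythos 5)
Your Step 1 and the splitting into $I_1+I_{21}+I_{22}$ are sound (and for $I_{21}$ you only need the integrated bound $\me\int_{t_k}^T|D_\th Z^k(s)|^2ds\le K$, not $\sup_s\me|D_\th Z^k(s)|^2$ --- an energy argument gives the former, not the latter). The genuine gap is in your treatment of $I_{22}$, i.e.\ in the second-order Malliavin estimates. When $f=f(t,s,x,z)$, differentiating \eqref{april1} once more produces the inhomogeneous terms $f_{xz}D_\th x(s)D_uZ^k(s)$, $f_{zx}D_\th Z^k(s)D_ux(s)$ and, crucially, $f_{zz}D_\th Z^k(s)D_uZ^k(s)$, so your energy argument for $D_uD_\th Y^k$ requires $\me\int_{t_j}^T|D_\th Z^k(s)|^2|D_uZ^k(s)|^2ds<\infty$ (and similar mixed terms). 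These are \emph{not} ``products of first-order Malliavin derivatives which are finite by Lemma \ref{sviex} and \eqref{april3}'': Lemma \ref{sviex} concerns $x$ only, and \eqref{april3} (or its analogue in case (II)) bounds $D_\th Y^k$, not $D_\th Z^k$. For $D_\th Z^k$ the only available estimates are of the form $\me\big(\int_{t_k}^T|D_\th Z^k(s)|^2ds\big)^{q}\le K$; these do not control $\int|D_\th Z^k|^2|D_uZ^k|^2ds$, which by Cauchy--Schwarz would need $L^4(ds)$ (or pointwise-in-$s$) moments of $DZ^k$ --- and producing those is exactly the second-order problem you are trying to solve (via $D_\th Z^k(s)=D_sD_\th Y^k(s)$), so the argument is circular as stated. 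This is precisely what distinguishes case (II) from case (I), where the corresponding products involve $D_\th Y^j$, for which pointwise fourth moments are available (\eqref{april25a}).

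The paper circumvents this by never taking a Malliavin derivative of $Z^k$ beyond first order: using the boundedness of $f_z$ it introduces the linearization processes $\Psi_k,\Phi_k$ of Lemma \ref{wang65} and writes $D_\th Y^k(t)=\me\big(\Phi_k(t)\Psi_k(T)D_\th Y^k(T)+\Phi_k(t)\int_t^T\Psi_k(s)f_xD_\th x(s)ds\mid\mf_t\big)$, so that the term $f_zD_\th Z^k$ disappears from the drift; the time increment is then estimated through \eqref{wang45}, Clark--Ocone, and \eqref{wang45a}, where the only derivative of $Z^k$ ever needed is the first-order quantity $\me\big(\int_{t_k}^T|D_uZ^k(s)|^2ds\big)^{p_0}$, at the price of the stronger integrability $p_0>2$ in (A4) (as the paper's remark points out). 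If you wish to keep your direct route, you must supply by an independent argument (e.g.\ a BMO or linearization bootstrap yielding pointwise-in-time or $L^4(ds)$ moments of $D_\th Z^k$) the missing bounds; without them the key estimate $\me\big|D_u\int_{t_j}^TF(s)ds\big|^2\le K$, and hence \eqref{april14b}, does not follow.
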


We need the following lemma to prove the above result. 
\begin{lemma}\label{wang65}
Let {\rm(A1)} hold, and for any $k=0,1,\cdots,N-1$, $\Psi_k(\cdot)$ and $\Phi_k(\cdot)$ solve the following SDEs
\begin{equation}\label{wang1}
\left\{
\begin{split}
 d\Psi_k(t)&=\Psi_k(t)f_z(t_k,t,x(t),Z^k(t)) dW(t),\q   t\in [0,T), \\
 \displaystyle
 \Psi(0)&=I_n
\end{split}
\right.
\end{equation}
and 
\begin{equation}\label{wang2}
\left\{
\begin{aligned}
 d\Phi_k(t)&=\big(f_z(t_k,t,x(t),Z^k(t))\big)^2\Phi_k(t) dt\\
      &\q-f_z(t_k,t,x(t),Z^k(t))\Phi_k(t) dW(t),\q   t\in [0,T), \\
 \displaystyle
 \Phi(0)&=I_n,
\end{aligned}
\right.
\end{equation}
respectively.  Then, for any $p\geq 2$, 
\begin{equation}\label{wang66}
\begin{aligned}
\me\big(\sup_{0\leq t\leq T}|\Psi_k(t)|^p\big)+\me\big(\sup_{0\leq t\leq T}|\Phi_k(t)|^p\big)\leq C,\\
\end{aligned}
\end{equation}
\begin{equation}\label{wang41}
\begin{aligned}
\me\big(\sup_{s\leq t\leq T}\big|\Phi_k(s)\Psi_k(t)\big|^p\big)
 + \me\big(\sup_{s\leq t\leq T}|\Phi_k(t)\Psi_k(s)|^p\big)\leq C,\\
\end{aligned}
\end{equation}
\begin{equation}\label{wang45}
\begin{aligned}
\me|(\Phi_k(t)-\Phi_k(s))\Psi_k(T_0)|^p\leq C|t-s|^{\frac{p}{2}},\,\,t,s\leq T_0\leq T;
\end{aligned}
\end{equation}
and for any $p\in [2,2p_0)$,
\begin{equation}\label{wang45a}
\begin{aligned}
\me\big(\sup_{\theta,s\leq t\leq T}|D_{\theta}(\Phi_k (s)\Psi_k (t))|^p\big)\leq C,
\end{aligned}
\end{equation}
where $C$ depends only on $p,\,L$ and $T$.
\end{lemma}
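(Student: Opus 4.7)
The plan is to establish the four bounds sequentially, leveraging two structural facts: the coefficient $f_z(t_k,t,x(t),Z^k(t))$ is uniformly bounded by $L$ under assumption (A1), and a direct application of It\^o's formula to the product gives $d(\Phi_k\Psi_k)=0$ with $\Phi_k(0)\Psi_k(0)=I_n$, so $\Phi_k(t)\Psi_k(t)=I_n$ for all $t\in[0,T]$. Both SDEs \eqref{wang1} and \eqref{wang2} are linear with uniformly bounded coefficients (drift of $\Phi_k$ bounded by $L^2$, diffusions by $L$), so for \eqref{wang66} I would apply It\^o's formula to $|\Psi_k(t)|^p$ and $|\Phi_k(t)|^p$, take expectations, and use Gronwall's inequality to bound $\sup_{t\le T}\me|\Psi_k(t)|^p$ and $\sup_{t\le T}\me|\Phi_k(t)|^p$. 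Burkholder--Davis--Gundy then upgrades these to bounds on expectations of suprema.

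For \eqref{wang41}, I exploit $\mathcal{F}_s$-measurability of $\Phi_k(s)$: writing $\Psi_k(t)=\Psi_k(s)+\int_s^t\Psi_k(u)f_z\,dW(u)$ and multiplying by $\Phi_k(s)$ on the left gives
\[\Phi_k(s)\Psi_k(t)=I_n+\int_s^t\Phi_k(s)\Psi_k(u)f_z(t_k,u,x(u),Z^k(u))\,dW(u),\]
so $\Theta_{s,t}:=\Phi_k(s)\Psi_k(t)$ satisfies a linear SDE of the same type as $\Psi_k$ on $[s,T]$ starting from $I_n$, and the argument for \eqref{wang66} applies uniformly in $s$. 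Symmetrically, $\Phi_k(t)\Psi_k(s)$ solves the SDE for $\Phi_k$ on $[s,T]$ started from $I_n$ at time $s$. For the H\"older bound \eqref{wang45}, I would write $\Phi_k(t)-\Phi_k(s)=\int_s^t f_z^2\Phi_k(u)\,du-\int_s^t f_z\Phi_k(u)\,dW(u)$, apply BDG and H\"older to obtain $\me|\Phi_k(t)-\Phi_k(s)|^{2p}\le C|t-s|^p$, and then conclude by Cauchy--Schwarz using the $L^{2p}$ bound on $\Psi_k(T_0)$ already established.

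The main obstacle is the Malliavin derivative estimate \eqref{wang45a}. My approach is to use $d\Theta_{s,t}=\Theta_{s,t}\,f_z(t_k,t,x(t),Z^k(t))\,dW(t)$ with $\Theta_{s,s}=I_n$ and differentiate in the Malliavin sense to obtain the inhomogeneous linear SDE
\[d(D_\theta\Theta_{s,t})=(D_\theta\Theta_{s,t})f_z\,dW(t)+\Theta_{s,t}\bigl[f_{zx}D_\theta x(t)+f_{zz}D_\theta Z^k(t)\bigr]dW(t),\]
with vanishing initial condition at $t=s\vee\theta$. The SDE is again linear in the unknown with bounded coefficient $f_z$, so BDG plus Gronwall reduce the task to bounding the forcing term. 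H\"older's inequality splits the product, and Lemma \ref{sviex} supplies $\me|D_\theta x(t)|^{2p_0}\le K$; the remaining factor $\me|D_\theta Z^k(t)|^q$ is controlled by invoking $Z^k(t)=D_t Y^k(t)$ from \eqref{april1} and applying the second-order Malliavin estimates already generated in the proof of Lemma \ref{regyk} (the bounds on $\me|D_\theta Y^k|^4$ and $\me|D_u D_\theta Y^k|^2$), extended to higher moments by the same It\^o--Gronwall template. The exponent restriction $p\in[2,2p_0)$ in the statement is dictated precisely by the $L^{2p_0}$ ceiling of the Malliavin moments of $x(\cdot)$ in (A4): the H\"older split of the forcing term must allocate strictly less than $2p_0$ to the factor $D_\theta x$, and a matching share to $D_\theta Z^k$, which is available since all other Malliavin moments can be taken arbitrarily large.
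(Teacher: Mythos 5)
Your treatment of \eqref{wang66}, \eqref{wang41} and \eqref{wang45} is correct and essentially equivalent to the paper's: the paper routes the same observations through Lemma \ref{aaaa} by applying $\Psi_k^\top(\cdot)x_0$, $\Phi_k(t)\Psi_k(s)x_0$ etc.\ to an arbitrary vector $x_0$, while you argue directly with It\^o/BDG/Gronwall and a Cauchy--Schwarz split for \eqref{wang45}; your version of \eqref{wang45} is in fact a little more economical than the paper's insertion of $\Psi_k(s)\Phi_k(s)=I_n$ inside the stochastic integral, and both deliver the same $|t-s|^{p/2}$ rate.

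The gap is in \eqref{wang45a}, specifically in how you control the $D_\theta Z^k$ part of the forcing term. You propose pointwise-in-$t$ moment bounds $\me|D_\theta Z^k(t)|^q$ obtained from $D_\theta Z^k(t)=D_\theta D_tY^k(t)$ together with ``the second-order Malliavin estimates already generated in the proof of Lemma \ref{regyk}.'' But the bounds you are citing (the fourth moment of $D_\theta Y^k$ in \eqref{april25a} and the bound on $D_uD_\theta Y^k$ in \eqref{april28a}) are established in the proof of Lemma \ref{regzka}, i.e.\ in case (I) where $f=f(t,s,x,y)$ and $f_z\equiv 0$; Lemma \ref{wang65} is only needed in case (II), $f=f(t,s,x,z)$, where the second-order Malliavin BSDE acquires forcing terms such as $f_{zz}\,D_uZ^k(s)\,D_\theta Z^k(s)$ and $f_{zx}\,D_ux(s)\,D_\theta Z^k(s)$. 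Bounding these requires higher-moment control of $DZ^k$ itself, which is essentially the regularity this lemma chain is trying to produce, so ``the same It\^o--Gronwall template'' does not go through as stated and the step is circular unless you supply a separate argument. The paper avoids this entirely: it never differentiates $Z^k$ a second time, but only needs the integrated quantity $\me\big(\int_\theta^T|D_\theta Z^k(t)|^2dt\big)^{p_0}$, which follows at once from the $L^{2p_0}$ a priori estimate of Lemma \ref{aaaa} applied to the first-order Malliavin BSDE \eqref{april1} (see \eqref{bai0031}), with the exponent constraint $p<2p_0$ arising because the companion factor $\sup_t|y_s(t)|$ must be put in $L^{2pp_0/(2p_0-p)}$. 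Replacing your pointwise bound on $D_\theta Z^k(t)$ by this integrated bound, and splitting the H\"older exponents accordingly, repairs the argument.
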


\begin{proof}
First of all,
for any $x_0\in \dbR^n$, set $x(\cdot)=\Psi_k^\top(\cdot)x_0$. Then $x(\cdot)$ solves the following SDE:
\begin{equation*}\label{wang4}
\left\{
\begin{aligned}
 dx(t)&=f_z^\top (t)x(t) dW(t),\q   t\in [0,T), \\
 \displaystyle
 x(0)&=x_0.
\end{aligned}
\right.
\end{equation*}
Then, by Lemma \ref{aaaa},
$$\me\big(\sup_{0\leq t\leq T}|x(t)|^p\big)\leq C|x_0|^p.$$
Consequently,
$$\me\big(\sup_{0\leq t\leq T}|\Psi(t)|^p\big)=\sup_{x_0\in\dbR^n}\frac{\me\big(\sup_{0\leq t\leq T}|x(t)|^p\big)}{|x_0|^p}\leq C.$$
Here $C$ depends only on $p,\,L$ and $T$. Similarly, one can prove $\ds \me\big(\sup_{0\leq t\leq T}|\Phi_k(t)|^p\big)\leq C$, and then \eqref{wang66} is proved.\\

Next, we only prove the second part $\ds \me\big(\sup_{s\leq t\leq T}|\Phi_k(t)\Psi_k(s)|^p\big)\leq C$ of \eqref{wang41}. The first one can be proved with the similar procedure.
For any $x_0\in \dbR^n$, set $\ds x_s(t)=\Phi_k(t)\Psi_k(s)x_0$. Then $x_s(t)$ solves the following SDE:
\begin{equation*}\label{wang42}
\left\{
\begin{split}
 dx_s(t)&=(f_z)^2 x_s(t) dt-f_zx_s(t) dW(t),\q   t\in [s,T), \\
 \displaystyle
 x_s(s)&=x_0.
\end{split}
\right.
\end{equation*}
Then, also by Lemma \ref{aaaa},
$$\me\big(\sup_{s\leq t\leq T}|\Phi_k(t)\Psi_k(s)x_0|^p\big)=\me\big(\sup_{s\leq t\leq T}|x_s(t)|^p\big)\leq C|x_0|^p,$$
where $C$ depends only on $p,\,L$ and $T$.

\ms

Now, by Eq. \eqref{wang2}, one has
\begin{equation}\label{bai12}
\begin{aligned}
&\me|(\Phi_k(t)-\Phi_k(s))\Psi_k(T_0)|^p\\
=&\me\Big|\int_s^t (f_z(\tau))^2\Phi_k(\tau)  d\tau\Psi_k(T_0)
    +\int_s^t f_z(\tau)\Phi_k(\tau)  dW(\tau)\Psi_k(T_0)\Big|^p\\
\leq&C\me\Big(\int_s^t\big|\Phi_k(\tau)\Psi_k(T_0)\big| d\tau\Big)^p+C\me\Big|\int_s^t f_z(\tau)\Phi_k(\tau)  dW(\tau)\Psi_k(T_0)\Big|^p\\
:=&C J_1+CJ_2.
\end{aligned}
\end{equation}
For $J_1$, by \eqref{wang41}, we have
\begin{equation}\label{bai43}
\begin{aligned}
J_1\leq& \me\int_s^t|\Phi_k(\tau)\Psi_k(T_0)|^p d\tau\Big(\int_s^t1 d\tau\Big)^{p-1}\\
=&\int_s^t\me|\Phi_k(\tau)\Psi_k(T_0)|^p d\tau(t-s)^{p-1}
\leq C(t-s)^p,
\end{aligned}
\end{equation}
where $C$ depends only on $p,\,L$ and $T$.
For $J_2$, by \eqref{wang41}, H\"older's inequality and Burkholder-Davis-Gundy inequality, we also can obtain
\begin{equation}\label{bai44}
\begin{aligned}
J_2=&\me\Big|\int_s^t f_z(\tau)\Phi_k(\tau)  dW(\tau)\Psi_k(T_0)\Big|^p
            =\me\Big|\int_s^t f_z(\tau)\Phi_k(\tau)\Psi_k(s)  dW(\tau)\Phi_k(s)\Psi_k(T_0)\Big|^p\\
\leq & \Big(\me\Big|\int_s^t f_z(\tau)\Phi_k(\tau)\Psi_k(s) dW(\tau)\Big|^{2p}\Big)^{1/2}\Big(\me|\Phi_k(s)\Psi_k(T_0)|^{2p}\Big)^{1/2}\\
\leq &C\Big(\me\Big(\int_s^t|\Phi_k(\tau)\Psi_k(s)|^2 d\tau\Big)^{p}\Big)^{1/2}\\
\leq& C\bigg\{\me\Big[\Big(\int_s^t1 d\tau\Big)^{\frac{p-1}{p}}\Big(\int_s^t|\Phi_k(\tau)\Psi_k(s)|^{2p} d\tau\Big)^{\frac{1}{p}}\Big]^p\bigg\}^{1/2}\\
\leq & C (t-s)^{\frac{p}{2}},
\end{aligned}
\end{equation}
where $C$ depends only on $p,\,L$.
Combining \eqref{bai12}--\eqref{bai44}, we have \eqref{wang45}.

\ms

Finally, we prove \eqref{wang45a}. Indeed,
For any $0\leq \theta,\,s\leq t\leq T$,
$D_{\theta}(\Phi_k(s)\Psi_k(\cdot))$ satisfies the following SDE:
\begin{equation*}\label{wang12}
\left\{
\begin{split}
 dD_{\theta}(\Phi_k(s)\Psi_k(t))&=\Big(D_{\theta}(\Phi_k(s)\Psi_k(t))f_z(t_k,t,x(t),Z^k(t))\\
           &\q+(\Phi_k(s)\Psi_k(t))(f_{zx}D_{\theta}x(t)+f_{zz}D_{\theta}Z^k(t)) \Big)dW(t),\q   \theta\leq t\leq T, \\
 \displaystyle
 D_{\theta}(\Phi_k(s)\Psi_k(\theta))&=0,\\
 D_{\theta}(\Phi_k(s)\Psi_k(t))&=0,\q 0\leq t <\theta.
\end{split}
\right.
\end{equation*}
For any $x_0\in \dbR^n$, set $x_{\theta,s}(\cdot)=D_{\theta}(\Psi_k^\top (\cdot)\Phi_k^\top (s))x_0$ and $y_s(\cdot)=\Psi_k^\top (\cdot)\Phi_k^\top (s)x_0$. Then $x_{\theta,s}(\cdot)$ satisfies the following SDE:
\begin{equation*}\label{wang13}
\left\{
\begin{split}
 dx_{\theta,s}(t)&=\big(f_z^\top (t)x_{\theta,s}(t)
           +\big(D_{\theta}x^\top (t)f_{zx}^\top (t)+D_{\theta}{Z^k}^\top (t)f_{zz}^\top (t)\big)y_s(t)\big) dW(t),\q   \theta\leq t\leq T, \\
 \displaystyle
 x_{\theta,s}(\theta)&=0,\\
 x_{\theta,s}(t)&=0,\q 0\leq t <\theta\leq T.
\end{split}
\right.
\end{equation*}
For any $p\in [2, \,2p_0)$, by Lemma \ref{aaaa}, we have
\begin{equation}\label{bai31}
\begin{aligned}
&\me\big(\sup_{\theta\leq t\leq T}|D_{\theta}(\Psi_k^\top (t)\Phi_k^\top (s))x_0|^p\big)\\
\leq & C\me\Big(\int_\theta^T\big|\big(D_{\theta} x^\top (t)f_{zx}^\top (t)+D_{\theta}{Z^k}^\top (t)f_{zz}^\top (t)\big)y_s(t)\big|^2\mathrm dt\Big)^{\frac {p} 2}\\
\leq& C\me\Big(\int_\th^T|D_\th x(t)|^2|y_s(t)|^2dt\Big)^{\frac p 2}\\
      &+C  \bigg\{\me\Big(\int_\theta^T|D_\theta Z^k(t)|^2 dt\Big)^{\frac{2p_0}{2}}\bigg\}^{\frac{p}{2p_0}}\Big(\me\big(\sup_{s\leq t\leq T}|y_s(t)|^{\frac{2pp_0}{2p_0-p}}\big)\Big)^{\frac{2p_0-p}{2p_0}},\\
\end{aligned}
\end{equation}
and 
\begin{equation}\label{bai031}
\begin{aligned}
    \me\Big(\int_\th^T|D_\th x(t)|^2|y_s(t)|^2dt\Big)^{\frac p 2}
\leq&  \bigg\{\me\bigg[\int_\th^T|D_\th x(t)|^{p_0}dt
   \Big(\int_\th^T|y_s(t)|^{\frac{2p_0}{p_0-2}}dt\Big)^{\frac {p_0-2}{2}}\bigg]\bigg\}^{\frac p {p_0}}\\
\leq& C\bigg[\me\int_\th^T|D_\th x(t)|^{2p_0}dt\bigg]^{\frac{p}{2p_0}}
   \Big[\me\big(\sup_{\th\leq t\leq T}|y_s(t)|^{2p_0}\big)\Big]^{\frac{p}{2p_0}}\\
\leq& C \Big(\sup_{\theta\leq t\leq T}\me|D_{\theta} x(t)|^{2p_0}\Big)^{\frac{p}{2p_0}} 
  \Big[\me\big(\sup_{\th\leq t\leq T}|y_s(t)|^{2p_0}\big)\Big]^{\frac{p}{2p_0}},\\
\end{aligned}
\end{equation}
\begin{equation}\label{bai0031}
\begin{aligned}
 \me\Big(\int_\theta^T|D_\theta Z^k(t)|^2\mathrm dt\Big)^{\frac{2p_0}{2}}
 \leq  C\bigg\{\me|D_\th x(T)|^{2p)}+\me\Big(\int_\th^T|D_\th x(t)|dt\Big)^{2p_0}\bigg\},
\end{aligned}
\end{equation}
where $C$ depends only on $p_0$, $L$ and $T$.  \eqref{bai31}, together with \eqref{bai031}, \eqref{bai0031}  and \eqref{wang41}, yields \eqref{wang45a}. That completes the proof.
\end{proof}

\ms

Now, we can prove Lemma \ref{regzkb}.
\begin{proof}[\bf {Proof of Lemma~\ref{regzkb}}]
We split the proof into two steps.

\textbf{Step 1.}
Similar to \eqref{april7a}, we also obtain:
for any $k=0,1,\cdots,N-1$,
$k\leq j\leq N-1$ and $s\in [t_j,t_{j+1}]$,
\begin{equation}\label{april7b}
\begin{aligned}
&Z^k(s)-Z^k(t_j)=D_sY^k(s)-D_{t_j}Y^k(t_j)\\
=&\big(D_sY^k(s)-D_{t_j}Y^k(s)\big)+\big(D_{t_j}Y^k(s)-D_{t_j}Y^k(t_j)\big),
\end{aligned}
\end{equation}
and 
\begin{equation}\label{april13b}
\begin{aligned}
&\me|D_{s}Y^k(s)-D_{t_j}Y^k(s)|^2
\leq  K|\pi|.\\
\end{aligned}
\end{equation}

\ms

\textbf{Step 2.} We claim that, for any $s\in [t_j, t_{j+1}]$, there exists a constant $K$, such that
\begin{equation}\label{april14b}
\begin{aligned}
\me|D_{t_j}Y^k(s)-D_{t_j}Y^k(t_j)|\leq K|s-t_j|.
\end{aligned}
\end{equation}
In order to do this,
for any $\th\leq t_j$, applying It\^o's formula to $\Psi_k(\cdot)D_\th Y^k(\cdot)$, we obtain
\begin{equation*}\label{wangbai2}
\begin{aligned}
\Psi_k(t)D_\th Y^k(t)
=&\Psi_k(T)D_\th Y^k(T)+\int_t^{T}\Psi_k(s) f_xD_\th x(s)ds\\
   &+\int_t^T\Psi_k(s)\big(f_zD_\th Y^k(s)+D_\th Z^k(s)\big)dW(s),\, t\in [t_j,t_j+1],\,j\geq k.\\
\end{aligned}
\end{equation*}
Since $\Psi_k(\cdot)\Phi_k(\cdot)=I_n$, one can get
\begin{equation*}\label{wangbai3}
\begin{aligned}
D_\th Y^k(t)=&\me\Big(\Phi_k(t)\Psi_k(T)D_\th Y^k(T)+\Phi_k(t)\int_t^T\Psi_k(s)f_xD_\th x(s)ds\big|\mf_t\Big).\\
\end{aligned}
\end{equation*}
Then
\begin{equation}\label{april17b}
\begin{aligned}
&D_\th Y^k(t)-D_\th Y^k(t_j)\\
=&\me\big(\Phi_k(t)\Psi_k(T)D_\th Y^k(T)|\mf_t\big)-\me\big(\Phi_k(t_j)\Psi_k(T)D_\th Y^k(T)|\mf_{t_j}\big)\\
           &+\me\Big(\Phi_k(t)\int_t^T\Psi_k(s) f_xD_\th x(s)ds|\mf_t\Big)
               -\me\Big(\Phi_k(t_j)\int_{t_j}^T \Psi_k(s)f_xD_\th x(s)ds|\mf_{t_j}\Big)\\
:=&I_1+I_2.\\
\end{aligned}
\end{equation}

\ms

Now, we estimate $I_1$ and $I_2$, respectively.
$I_1$ can be written as 
\begin{equation}\label{april19b}
\begin{aligned}
I_1=&\me\Big((\Phi_k(t)-\Phi(t_j))\Psi_k(T)D_\th Y^k(T)\big|\mf_t\Big)\\
       &+\me(\Phi(t_j)\Psi_k(T)D_\th Y^k(T)|\mf_t)-\me(\Phi(t_j)\Psi_k(T)D_\th Y^k(T)|\mf_{t_j})\\
   :=&I_{11}+I_{12}.    
\end{aligned}
\end{equation}
By \eqref{wang45}, a direct calculate leads to
\begin{equation}\label{april190b}
\begin{aligned}
\me|I_{11}|^2\leq &\Big(\me|(\Phi_k(t)-\Phi(t_j))\Psi_k(T)|^4\me|D_\th Y^k(T)|^4\Big)^{1/2}\leq C|t-t_j|.
\end{aligned}
\end{equation}
Meanwhile, by Clark-Ocone representation formula,
\begin{equation*}\label{april191b}
\begin{aligned}
 &\Phi(t_j)\Psi_k(T)D_\th Y^k(T)=\me\big(\Phi(t_j)\Psi_k(T)D_\th Y^k(T)\big)+\int_0^T u_\th(s)dW(s),
 \end{aligned}
\end{equation*}
where $\ds u_\th(\cdot)=\me\Big(D_{\cdot}(\Phi(t_j)\Psi_k(T))D_\th Y^k(T)+\Phi(t_j)\Psi_k(T)D_{\cdot}D_\th Y^k(T)\big|\mf_{\cdot}\Big)$.
Therefore, by Lemma \ref{wang65}, one gets
\begin{equation*}\label{april191b}
\begin{aligned}
\me|u_\th(s)|^2
\leq &C\bigg\{\Big(\me| D_s(\Phi(t_j)\Psi_k(T))|^4\me| D_\th Y^k(T)|^4\Big)^{1/2} 
                              +\Big(\me| \Phi_k(t_j)\Psi_k(T)|^4\me| D_sD_\th Y^k(T)|^4\Big)^{1/2}\bigg\}\\
   \leq&C<\infty.
\end{aligned}
\end{equation*}
Thus, 
\begin{equation}\label{april192b}
\begin{aligned}
\me|I_{12}|^2=  \me\Big|\int_{t_j}^t u_\th(s)dW(s)\Big|^2=\me\int_{t_j}^t |u_\th(s)|^2ds\leq C|t-t_j|.
\end{aligned}
\end{equation}

\ms

For $I_2$, we can rewrite it as follows:
\begin{equation}\label{april18b}
\begin{aligned}
I_2=&\me\Big(\int_t^T(\Phi_k(t)-\Phi_k(t_j)) \Psi_k(s)f_xD_\th x(s)ds\big|\mf_t\Big)\\
       &+\me\Big(\Phi_k(t_j)\Big(\int_{t}^T \Psi_k(s)f_xD_\th x(s)ds-\int_{t_j}^T \Psi_k(s)f_xD_\th x(s)ds\Big)\big|\mf_t\Big)\\
       &+\me\Big(\int_{t_j}^T\Phi_k(t_j)\Psi_k(s) f_xD_\th x(s)ds|\mf_t\Big)
             -\me\Big(\int_{t_j}^T\Phi_k(t_j)\Psi_k(s) f_xD_\th x(s)ds|\mf_{t_j}\Big)\\
     :=&I_{21}+I_{22}+I_{23}.  
\end{aligned}
\end{equation}
It is easy to check that
\begin{equation}\label{april21b}
\begin{aligned}
&\me I_{21}^2\leq \Big(\me\int_t^T|(\Phi_k(t)-\Phi_k(t_j)) \Psi_k(s) |^4ds\Big)^{1/2}
          \Big(\me\int_t^T|f_xD_\th x(s) |^4ds\Big)^{1/2}\\
   \leq & K(t-t_j)\Big(\sup_{0\leq s\leq T}\me |D_\th x(s)|^4\Big)^{1/2} \\         
   \leq &K|t-t_j|,
\end{aligned}
\end{equation}
and
\begin{equation}\label{april210b}
\begin{aligned}
&\me I_{22}^2\leq |t-t_j| \me\int_{t_j}^t|\Phi_k(t_j) \Psi_k(s)|^2 |f_xD_\th x(s)|^2ds\\
   \leq & K(t-t_j)\Big(\me\int_{t_j}^t|\Phi_k(t_j) \Psi_k(s) |^4ds\Big)^{1/2}
          \Big(\me\int_{t_j}^t|f_xD_\th x(s) |^4ds\Big)^{1/2} \\         
   \leq &K|t-t_j|.
\end{aligned}
\end{equation}
Now, we are in the step to estimate $I_{23}$.
By Clark-Ocone representation formula,
$$
\int_{t_j}^T\Phi_k(t_j) \Psi_k(s) f_xD_\th x(s)ds=\me\int_{t_j}^T\Phi_k(t_j) \Psi_k(s) f_xD_\th x(s)ds
+\int_0^T v_\th (u)dW(u),
$$
where
\begin{equation}\label{april211b}
\begin{aligned}
&v_\th(u)\\
=&\me\Big( D_u\int_{t_j}^T \Phi_k(t_j)\Psi_k(s)f_xD_\th x(s)ds      \big|\mf_u\Big)\\
=&\me\Big(\int_{t_j}^T \Phi_k(t_j)D_u\Psi_k(s)f_xD_\th x(s)ds \big|\mf_u\Big)
      +\me\Big(\int_{t_j}^T\Phi_k(t_j)\Psi_k(s)D_u(f_xD_\th x(s))ds \big|\mf_u\Big)\\
:=&V_1+V_2.
\end{aligned}
\end{equation}

For $V_1$, by \eqref{wang45a}, it is easy to check that
\begin{equation}\label{april212b}
\begin{aligned}
\me|V_1|^2=\Big(\me\int_{t_j}^T |\Phi_k(t_j)D_u\Psi_k(s)|^4ds \me\int_{t_j}^T | f_xD_\th x(s)|^4ds\Big)^{1/2}\leq K.
\end{aligned}
\end{equation}

For $V_2$,
\begin{equation}\label{april213b}
\begin{aligned}
\me|V_2|^2
\leq &\me\Big|\int_{t_j}^T\Phi_k(t_j)\Psi_k(s)D_u(f_xD_\th x(s))ds\Big|^2\\
\leq &K \me\Big(\int_{t_j}^T|\Phi_k(t_j)\Psi_k(s)|\times \big( |D_\th x(s)D_u x(s)|\\
        &    +|D_\th x(s)D_u Z^{k}(s)|+|D_uD_\th x(s)| \big)ds\Big)^2.\\
\end{aligned}
\end{equation}
We estimate the right side of \eqref{april213b} term by term. By Lemma \ref{sviex} and H\"{o}lder's  inequality,
\begin{equation}\label{april214b}
\begin{aligned}
& \me\Big(\int_{t_j}^T|\Phi_k(t_j)\Psi_k(s)| |D_\th x(s)D_u x(s)| ds\Big)^2\\
\leq &\me\bigg\{\sup_{0\leq s\leq T} |\Phi_k(t_j)\Psi_k(s)|^2
               \Big(\int_{t_j}^T \big(|D_\th x(s)|^2 +|D_u x(s)|^2\big)ds\Big)^2\bigg\}\\
\leq &K \Big(\me\sup_{0\leq s\leq T} |\Phi_k(t_j)\Psi_k(s)|^{\frac{2p_0}{p_0-1}}\Big)^{\frac{p_0-1}{p_0}}
              \Big( \me\int_{t_j}^T \big(|D_\th x(s)|^{2p_0} +|D_u x(s)|^{2p_0}\big)ds\Big)^{\frac{1}{p_0}}\\    
\leq &K<\infty.    
\end{aligned}
\end{equation}
Similarly,
\begin{equation}\label{april217b}
\begin{aligned}
& \me\Big(\int_{t_j}^T|\Phi_k(t_j)\Psi_k(s)| |D_uD_\th x(s)| ds\Big)^2\\
\leq &\me\bigg\{\sup_{0\leq s\leq T} |\Phi_k(t_j)\Psi_k(s)|^2
               \Big(\int_{t_j}^T |D_uD_\th x(s)|ds\Big)^2\bigg\}\\
\leq &\Big(\me\sup_{0\leq s\leq T} |\Phi_k(t_j)\Psi_k(s)|^{\frac{2p_0}{p_0-2}}\Big)^{\frac{p_0-2}{p_0}}
              \Big( \me\int_{t_j}^T |D_uD_\th x(s)|^{p_0}ds\Big)^{\frac{2}{p_0}}\\    
\leq &K<\infty.    
\end{aligned}
\end{equation}
Now, we estimate the left terms in the right side of \eqref{april213b}. For any $k=0,1,\cdots,N-1$, 
applying Lemma \ref{aaaa}, one can get
\begin{equation*}\label{zk1aa}
\begin{aligned}
\me\Big(\int_{t_k}^T|D_\th Z^k(s)|^2ds\Big)^{p_0}
\leq  K\bigg\{\me|D_\th Y^k(T)|^{2p_0}+ \me\int_t^{t_{j+1}}|f_xD_\th x(s)|^{2p_0}ds\bigg\}
\leq  K<\infty.
\end{aligned}
\end{equation*}
Therefore,
\begin{equation}\label{april216b}
\begin{aligned}
& \me\Big(\int_{t_j}^T|\Phi_k(t_j)\Psi_k(s)| |D_\th x(s)D_u Z^{k}(s)| ds\Big)^2\\
\leq &\me\bigg\{\sup_{0\leq s\leq T} |\Phi_k(t_j)\Psi_k(s)|^2\Big( \sup_{0\leq s\leq T}|D_\th x(s)|^4
              + \Big(\int_{t_j}^T |D_u Z^{k}(s)|^2ds\Big)^2\Big)\bigg\}\\
\end{aligned}
\end{equation}
\begin{equation*}
\begin{aligned}
\leq &\Big(\me\sup_{0\leq s\leq T} |\Phi_k(t_j)\Psi_k(s)|^{\frac{2p_0}{p_0-2}}\Big)^{\frac{p_0-2}{p_0}} \\
         &\times  \Bigg\{\Big(\me\sup_{0\leq s\leq T} |D_\th x(s)|^{2p_0}\Big)^{\frac{2}{p_0}}
              +\bigg[ \me\Big(\int_{t_j}^T |D_u Z^k(s)|^2ds\Big)^{p_0}\bigg]^{\frac{2}{p_0}}\Bigg\}\\    
\leq &K<\infty.
\end{aligned}
\end{equation*}
Hence, \eqref{april211b}, together with \eqref{april212b}--\eqref{april216b}, yields that
\begin{equation}\label{april218b}
\begin{aligned}
\me|I_{23}|^2=\me\Big|\int_{t_j}^t v_\th (u)dW(u)\Big|^2
\leq \me\int_{t_j}^t |v_\th (u)|^2du\leq K|t-t_j|.
\end{aligned}
\end{equation}

Finally, by \eqref{april17b}--\eqref{april210b} and  \eqref{april218b},
one gets
$$\me|D_\th Y_k(t)-D_\th Y_k(t_j)|^2\leq K|t-t_j|,$$
which deduces \eqref{april14b} by setting $\th=t_j$. 
Now combining \eqref{april7b} with \eqref{april13b} and \eqref{april14b}, we 
have the regularity of $Z$ \eqref{april06b}.
\end{proof}

\begin{remark}
From the proof of Lemma \ref{regzka} and Lemma \ref{regzkb}, we can see that
when $f=f(t,s,x,y)$ in BSDE \eqref{ass3}, we only need $p_0=2$ in  assumption {\rm(A4)};
but when $f=f(t,s,x,z)$, $p_0>2$ 
is needed.
\end{remark}

\ms

\subsection{Proof of Theorem \ref{convergence}}
In this part, we prove our main result Theorem \ref{convergence}. Firstly, we need the following lemma on conditional expectation. 
One can refer to \cite{Wang15} for proof.

\begin{lemma}\label{conditional}
For any $\varphi(\cdot)\in L^2_{\mathbb{F}}(\O\times (0,T);\dbR^n)$ and $0\leq s<t \leq T$, write
$$\varphi_0=\frac{1}{t-s}\me\bigg(\int_s^t\varphi(\tau) d\tau\Big|\mf_s\bigg).$$
Then for any $\xi\in L^2_{\mf_s}(\O;\dbR^n)$, it holds that
$$\me\int_s^t|\varphi(\tau)-\varphi_0|^2 d\tau\leq \me\int_s^t|\varphi(\tau)-\xi|^2 d\tau.$$
\end{lemma}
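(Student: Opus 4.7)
The plan is to identify $\varphi_0$ as the orthogonal projection, in the Hilbert space $L^2(\Omega\times(s,t);\dbR^n)$, of $\varphi(\cdot)\big|_{[s,t]}$ onto the closed subspace
\[
\sH=\bigl\{\eta\in L^2_{\mf_s}(\Omega;\dbR^n)\;\text{regarded as a constant-in-}\tau\text{ process on }[s,t]\bigr\},
\]
and then read the desired inequality as a standard best-approximation property. The only substantive point is to verify the orthogonality relation; everything else is algebraic expansion.

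I would first write, for any $\xi\in L^2_{\mf_s}(\Omega;\dbR^n)$, the polarization identity
\[
|\varphi(\tau)-\xi|^2=|\varphi(\tau)-\varphi_0|^2+2\bigl\langle\varphi(\tau)-\varphi_0,\,\varphi_0-\xi\bigr\rangle+|\varphi_0-\xi|^2,
\]
integrate over $\tau\in[s,t]$ and take expectation. Since $|\varphi_0-\xi|^2\ge 0$, it suffices to show that the cross term vanishes, i.e.
\[
\me\int_s^t\bigl\langle \varphi(\tau)-\varphi_0,\,\varphi_0-\xi\bigr\rangle d\tau=0.
\]

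The key step is this vanishing. Pulling the $\tau$-independent factor $\varphi_0-\xi$ outside the $d\tau$ integral gives
\[
\me\Bigl\langle \int_s^t\varphi(\tau)d\tau-(t-s)\varphi_0,\ \varphi_0-\xi\Bigr\rangle.
\]
By the very definition of $\varphi_0$, the vector $\int_s^t\varphi(\tau)d\tau-(t-s)\varphi_0$ equals $\eta-\me(\eta\,|\,\mf_s)$ with $\eta=\int_s^t\varphi(\tau)d\tau\in L^2_{\mf_T}(\Omega;\dbR^n)$. Since both $\varphi_0$ and $\xi$ are $\mf_s$-measurable and square-integrable, conditioning on $\mf_s$ and applying the tower property gives
\[
\me\bigl\langle \eta-\me(\eta\,|\,\mf_s),\,\varphi_0-\xi\bigr\rangle
=\me\bigl\langle \me(\eta\,|\,\mf_s)-\me(\eta\,|\,\mf_s),\,\varphi_0-\xi\bigr\rangle=0,
\]
which is exactly the required orthogonality. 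Combining the three pieces yields
\[
\me\int_s^t|\varphi(\tau)-\xi|^2d\tau=\me\int_s^t|\varphi(\tau)-\varphi_0|^2d\tau+(t-s)\,\me|\varphi_0-\xi|^2\ge \me\int_s^t|\varphi(\tau)-\varphi_0|^2d\tau,
\]
with equality iff $\xi=\varphi_0$ a.s.

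There is essentially no obstacle here; the only points requiring a line of care are the integrability checks (so that pulling $\varphi_0-\xi$ outside the integral and applying Fubini are legitimate), and the verification that $\varphi_0\in L^2_{\mf_s}(\Omega;\dbR^n)$, which follows from Jensen's inequality applied to $\eta\in L^2$. Optionally I would also note the identity
\[
\me\int_s^t|\varphi(\tau)-\xi|^2d\tau-\me\int_s^t|\varphi(\tau)-\varphi_0|^2d\tau=(t-s)\me|\varphi_0-\xi|^2,
\]
making the result slightly sharper than stated and clarifying the uniqueness of the minimizer.
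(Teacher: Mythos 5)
Your proof is correct and complete: the cross term vanishes exactly as you argue (the tower property applies since $\varphi_0-\xi$ is $\mf_s$-measurable and all products are integrable by Cauchy--Schwarz), and the resulting identity $\me\int_s^t|\varphi(\tau)-\xi|^2d\tau=\me\int_s^t|\varphi(\tau)-\varphi_0|^2d\tau+(t-s)\me|\varphi_0-\xi|^2$ gives the claim with the equality case as a bonus. The paper itself gives no proof of this lemma, deferring to the reference [Wang15]; your argument is the standard best-approximation property of conditional expectation in $L^2$ and is exactly what one would expect that reference to contain.
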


The following lemma is on the relation
between $(Y^{\pi(\cdot)}(\cdot),Z^{\pi(\cdot)}(\cdot))$ and $(Y^{\pi(\cdot),\pi}(\cdot),Z^{\pi(\cdot),\pi}(\cdot))$.
\begin{lemma}\label{wang1}
Let {\rm (A1)--(A4)} hold. Then, for any $k=0,1,\cdots,N-1$,
\begin{equation}\label{kk011}
\begin{aligned}
   &\sup_{k\leq j\leq N}\me|Y^k(t_j)-Y^{k,\pi}(t_j)|^2+\me\int_{t_k}^T|Z^k(s)-Z^{k,\pi}(\t(s))|^2ds \leq   K|\pi|,
\end{aligned}
\end{equation}
where $K$ is a constant depending only on $L$ and $T$.
\end{lemma}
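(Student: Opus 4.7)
I would establish a backward one-step BSDE stability estimate on each cell $[t_l,t_{l+1}]$ with $l\geq k$, and then iterate using Lemma \ref{gron}(2), which is tailored precisely to the two-index coupling produced by the $Y^{l,\pi}(t_{l+1})$ argument in the Euler driver \eqref{ass4}. Write $\delta Y^k(t):=Y^k(t)-Y^{k,\pi}(t)$, $\delta Z^k(s):=Z^k(s)-\widehat Z^{k,\pi}(s)$, and $a_{k,j}:=\me|\delta Y^k(t_j)|^2$. At $t_N$, assumption (A2) together with Theorem \ref{svie-error} yields the terminal bound $a_{k,N}\leq L^2\me|x(T)-x^\pi(T)|^2\leq C|\pi|$.

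\textbf{One-step estimate and driver gap.} On $(t_l,t_{l+1}]$ the difference satisfies $-d(\delta Y^k)=\Delta f(s)\,ds-\delta Z^k(s)\,dW(s)$ with
\[
\Delta f(s)=f(t_k,s,x(s),Y^l(s),Z^k(s))-f(t_k,t_l,x^\pi(t_l),Y^{l,\pi}(t_{l+1}),\widehat Z_0^{k,\pi}(t_l)).
\]
Applying It\^o's formula to $|\delta Y^k|^2$, taking expectation, and using Young's inequality with a free parameter $\alpha$ gives
\[
a_{k,l}+\me\int_{t_l}^{t_{l+1}}|\delta Z^k|^2\,ds\leq(1+\alpha|\pi|)a_{k,l+1}+\tfrac{1+\alpha|\pi|}{\alpha}\me\int_{t_l}^{t_{l+1}}|\Delta f(s)|^2\,ds.
\]
I would then decompose $\Delta f$ term by term via (A1): the time gap contributes $L^2|\pi|$; the $x$-gap is bounded by $\me|x(s)-x(t_l)|^2+\me|x(t_l)-x^\pi(t_l)|^2\leq K|\pi|$ using Lemma \ref{sviex} and Theorem \ref{svie-error}; the $Y$-gap splits as $|Y^l(s)-Y^l(t_{l+1})|^2+|\delta Y^l(t_{l+1})|^2$, with the first piece $\leq K|\pi|$ by Lemma \ref{regyk}; and in case (II) the $Z$-gap is handled by inserting the conditional $L^2$-projection $Z_0^k(t_l):=\Delta_l^{-1}\me(\int_{t_l}^{t_{l+1}}Z^k\,du\mid\mf_{t_l})$, whereupon Lemma \ref{conditional} combined with Lemmas \ref{regzka}/\ref{regzkb} yields $\me\int_{t_l}^{t_{l+1}}|Z^k(s)-\widehat Z_0^{k,\pi}(t_l)|^2\,ds\leq K|\pi|^2+2\me\int_{t_l}^{t_{l+1}}|\delta Z^k|^2\,du$. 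Choosing $\alpha$ large enough to absorb the $\delta Z^k$ piece into the LHS produces the recursion
\[
a_{k,l}+\tfrac12\me\int_{t_l}^{t_{l+1}}|\delta Z^k|^2\,ds\leq(1+C|\pi|)a_{k,l+1}+C|\pi|\,a_{l,l+1}+CK|\pi|^2,\qquad l\geq k+1,
\]
together with the diagonal version $a_{k,k}+\tfrac12\me\int|\delta Z^k|^2\leq(1+C|\pi|)a_{k,k+1}+CK|\pi|^2$ on $[t_k,t_{k+1}]$, where the BSDE driver already uses $Y^k$. Case (I) is strictly simpler, since no $Z$-argument appears in $f$.

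\textbf{Closure and main obstacle.} Replacing $a_{l,l+1}$ by $a_{l,l}$ via the bound $a_{l,l+1}\leq 3a_{l,l}+K|\pi|$ (obtained from Lemma \ref{regyk} together with the martingale regularity of the Euler scheme on each cell), the recursion matches \eqref{kk03} with $b=1+C|\pi|$ and $\gamma=2$. Combined with $a_{k,N}\leq C|\pi|$, Lemma \ref{gron}(2) yields $\sup_{k\leq j\leq N}a_{k,j}\leq K|\pi|$. Summing the one-step estimate over $l\in\{k,\dots,N-1\}$ then gives $\me\int_{t_k}^T|\delta Z^k|^2\,ds\leq K|\pi|$, and one more appeal to Lemma \ref{conditional} together with identity \eqref{ass5} upgrades this to the stated bound for $\me\int_{t_k}^T|Z^k(s)-Z^{k,\pi}(\tau(s))|^2\,ds$. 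The principal obstacle is the off-diagonal coupling through $Y^{l,\pi}(t_{l+1})$, which forces us beyond a scalar Gronwall into the two-index form of Lemma \ref{gron}(2); a secondary difficulty (case II) is absorbing the projection residual $\me|Z_0^k(t_l)-\widehat Z_0^{k,\pi}(t_l)|^2$ against the BSDE $Z$-energy, which requires $|\pi|$ to be sufficiently small.
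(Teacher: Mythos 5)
Your proposal follows essentially the same route as the paper's proof: the same one-step stability estimate with the driver gap split into the time, $x$, $Y$ and (in case II) projected-$Z$ pieces, controlled by Lemma \ref{sviex}, Theorem \ref{svie-error}, Lemma \ref{regyk}, Lemmas \ref{regzka}--\ref{regzkb} and Lemma \ref{conditional}; the same off-diagonal recursion closed by the terminal bound $I_{k,N}\le K|\pi|$ and Lemma \ref{gron}; the same summation over cells for the $\widehat Z^{k,\pi}$-energy; and the same final upgrade to $Z^{k,\pi}(\t(s))$ via the conditional projection and identity \eqref{ass5}. The only point to flag is your closure step: the substitution $a_{l,l+1}\le 3a_{l,l}+K|\pi|$ runs forward in time against a backward recursion and would require a uniform a priori $L^2$ bound on the Euler scheme's martingale increment $\int_{t_l}^{t_{l+1}}\widehat Z^{l,\pi}\,dW$, which is not free; it is also unnecessary, since the paper simply keeps the coupling at the right endpoint (its $I_{j,j+1}$ term in \eqref{kk4}) and unrolls the two-index recursion directly, exactly as in the proof of Lemma \ref{gron}. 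With that cosmetic detour removed, your argument matches the paper's proof in all essentials.
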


\begin{proof}
We split the proof into three steps.

\textbf{Step 1.}
For any $k=0,1,\cdots,N-1$ and $k\leq j\leq N-1$, denote
\begin{equation}\label{Ikj}
\begin{aligned}
&I_{k,j}=\sup_{t_j\leq t\leq t_{j+1}}\me|Y^k(t)-Y^{k,\pi}(t)|^2
               +\frac 1 2\me\int_{t_j}^{t_{j+1}}|Z^k(s)-\widehat Z^{k,\pi}(s)|^2ds;\\
&I_{k,N}=\me|g(t_k,x(T))-g(t_k,x^\pi(T))|^2.
\end{aligned}
\end{equation}

By Eq. \eqref{ass3} and \eqref{ass4}, for $j\geq k$, we have
\begin{equation*}\label{kj1}
\begin{aligned}
   &\big(Y^k(t_j)-Y^{k,\pi}(t_j)\big)+\int_{t_j}^{t_{j+1}}\big(Z^k(s)-\widehat Z^{k,\pi}(s)\big)dW(s)\\
=&\big(Y^k(t_{j+1})-Y^{k,\pi}(t_{j+1})\big)\\
  &      +\int_{t_j}^{t_{j+1}}\big(f(t_{k},s,x(s),Y^j(s),Z^k(s))
        -f(t_{k},t_j,x^\pi(t_j),Y^{j,\pi}(t_{j+1}),Z^{k,\pi}_0(t_{j}))\big)ds.\\
\end{aligned}
\end{equation*}
Squaring and Taking expectation on both sides of the above equation, we obtain
\begin{equation}\label{kj2}
\begin{aligned}
    &\me|Y^k(t_j)-Y^{k,\pi}(t_j)|^2+\me\int_{t_j}^{t_{j+1}}|Z^k(s)-\widehat Z^{k,\pi}(s)|^2ds\\
\leq & \Big(1+\frac{8\D_j}{\e}\Big)\me|Y^k(t_{j+1})-Y^{k,\pi}(t_{j+1})|^2
          \\
      & +\Big(8+\frac{\e}{\D_j}\Big)L^2\bigg\{\me\Big|\int_{t_j}^{t_{j+1}}\sqrt{s-t_j}ds\Big|^2
           +\me\Big|\int_{t_j}^{t_{j+1}}x(s)-x(t_j)ds\Big|^2\\
      &  +\me\Big|\int_{t_j}^{t_{j+1}}x(t_j)-x^\pi(t_j)ds\Big|^2 
          +\me\Big|\int_{t_j}^{t_{j+1}}Y^j(s)-Y^j(t_{j+1})ds\Big|^2\\
     &+\me\Big|\int_{t_j}^{t_{j+1}}Y^j(t_{j+1})-Y^{j,\pi}(t_{j+1})ds\Big|^2
            +\me\Big|\int_{t_j}^{t_{j+1}}Z^k(s)-Z^k(t_{j})ds\Big|^2\\
     & +\me\Big|\int_{t_j}^{t_{j+1}}\frac{1}{\D_{j}}\me\Big(\int_{t_{j}}^{t_{j+1}}\big(Z^k(t_{j})-Z^k(\t)\big)d\t
            \big|\mf_{t_{j}}\Big)ds\Big|^2\\
       &+\me\Big|\int_{t_j}^{t_{j+1}}\frac{1}{\D_{j}}\me\Big(\int_{t_{j}}^{t_{j+1}}
                \big(Z^k(\t)-\widehat Z^{k,\pi}(\t)\big)d\t
            \big|\mf_{t_{j}}\Big)ds\Big|^2\bigg\}\\
\leq & \Big(1+\frac{8\D_j}{\e}\Big)\me|Y^k(t_{j+1})-Y^{k,\pi}(t_{j+1})|^2\\
        &+\Big(8+\frac{\e}{\D_j}\Big)L^2\bigg\{K|\pi|^3+|\pi|^2\me|Y^j(t_{j+1})-Y^{j,\pi}(t_{j+1})|^2\\
        & \qq\qq +|\pi|\me\int_{t_{j}}^{t_{j+1}}|Z^k(s)-\widehat Z^{k,\pi}(s)|^2ds\bigg\}.\\
\end{aligned}
\end{equation}
Now, choosing $\ds \e=\frac{1}{2L^2}$, then for $\ds|\pi|\leq \frac{1}{16L^2}$, one gets
\begin{equation}\label{kj02}
\begin{aligned}
    &\me|Y^k(t_j)-Y^{k,\pi}(t_j)|^2+\frac 1 2 \me\int_{t_j}^{t_{j+1}}|Z^k(s)-\widehat Z^{k,\pi}(s)|^2ds\\
\leq & (1+16L^2|\pi|)\me|Y^k(t_{j+1})-Y^{k,\pi}(t_{j+1})|^2\\
        &+\Big(8|\pi|+\frac{1}{2L^2}\Big)L^2\bigg\{K|\pi|^2+|\pi|\me|Y^j(t_{j+1})-Y^{j,\pi}(t_{j+1})|^2\bigg\}.\\
\end{aligned}
\end{equation}

In the above inequality, we use 
Lemma \ref{sviex}, Theorem \ref{svie-error} and Lemma \ref{regyk}.
For simplicity, denote $\ds b=1+16L^2|\pi|,\, c=b|\pi|.$ Then,  by induction, we can get
\begin{equation}\label{kk4}
\begin{aligned}
I_{k,j}\leq & bI_{k,j+1}+cI_{j,j+1}+cK|\pi|\\
 \leq & b^{N-j}I_{k,N}+\sum_{l=0}^{N-j-1}b^{N-j-l-1}c(b+c)^l I_{j+l,N}+cK|\pi|\sum_{l=0}^{N-j-1}(b+c)^l.
\end{aligned}
\end{equation}

For any $k$, by (A2) and Theorem \ref{svie-error},
\begin{equation}\label{kk5}
\begin{aligned}
I_{k,N}=&\me|g(t_k,x(T))-g(t_k,x^\pi(T))|^2
\leq K\me|x(T)-x^\pi(T)|^2\leq K|\pi|.
\end{aligned}
\end{equation}
Also, it is easy to check that, 
\begin{equation}\label{kk6}
\begin{aligned}
&\sum_{l=0}^{N-j-1}b^{N-j-l-1}c(b+c)^l=b^{N-j-1}c\sum_{l=0}^{N-j-1}\frac{(b+c)^l}{b^l}
\leq  b^{N}\Big(1+\frac c b\Big)^{N}\leq e^{16L^2T}e^{T},\\
\end{aligned}
\end{equation}
and
\begin{equation}\label{kk8}
\begin{aligned}
&cK|\pi|\sum_{l=0}^{N-j-1}(b+c)^l= cK|\pi|\frac{(b+c)^{N-j}-1}{b+c-1}
=K b|\pi|^2 \frac{(b+b|\pi|)^{N-j}-1}{b+b|\pi|-1}.\\
\end{aligned}
\end{equation}
Since 
\begin{equation*}\label{kk9}
\begin{aligned}
  b+b|\pi|
=1+(1+16L^2)|\pi|+16L^2|\pi|^2
\leq1+(2+32L^2)|\pi|,
\end{aligned}
\end{equation*}
and $\ds (b+b|\pi|)-1\geq 16L^2|\pi|$, \eqref{kk8} turns into
\begin{equation}\label{kk10}
\begin{aligned}
&cK|\pi|\sum_{l=0}^{N-j-1}(b+c)^l\leq K (1+16L^2|\pi|)|\pi|^2 \frac{\big(1+(2+32L^2)|\pi|\big)^N}{16L^2|\pi|}\\
\leq &\frac K {16L^2} |\pi| (1+16L^2|\pi|) e^{(2+32L^2)T}\leq K|\pi|.
\end{aligned}
\end{equation}
Hence, \eqref{kk4}, together with \eqref{kk5}, \eqref{kk6} and \eqref{kk10}, yields that
\begin{equation}\label{kk11}
\begin{aligned}
   &\me|Y^k(t_j)-Y^{k,\pi}(t_j)|^2 \leq   K|\pi|.
\end{aligned}
\end{equation}
That is the first part of \eqref{kk011}.

\ms

\textbf{Step 2.}
Now, we estimate $\ds \me\int_{t_k}^T|Z^k(s)- Z^{k,\pi}(\t(s))|^2ds$. 
By \eqref{kj02},  summing from $j=k$ to $N-1$ leads to
\begin{equation*}\label{zkj1}
\begin{aligned}
    &\sum_{j=k}^{N-1}\me|Y^k(t_j)-Y^{k,\pi}(t_j)|^2+\frac 1 2\me\int_{t_k}^{t_{N}}|Z^k(s)-\widehat Z^{k,\pi}(s)|^2ds\\
\leq &(1+16L^2|\pi|)\sum_{j=k}^{N-1}\me|Y^k(t_{j+1})-Y^{k,\pi}(t_{j+1})|^2\\
       &+(1+16L^2|\pi|)|\pi|\sum_{j=k}^{N-1}\big\{K|\pi|+\me|Y^j(t_{j+1})-Y^{j,\pi}(t_{j+1})|^2\big\}.
\end{aligned}
\end{equation*}
Hence, by \eqref{kk11},
\begin{equation}\label{zkj2}
\begin{aligned}
    &\me\int_{t_k}^T|Z^k(s)-\widehat Z^{k,\pi}(s)|^2ds\\
\leq &32L^2|\pi|\sum_{j=k}^{N-1}\me|Y^k(t_{j+1})-Y^{k,\pi}(t_{j+1})|^2+2\me|Y^k(t_{N})-Y^{k,\pi}(t_{N})|^2\\
       &-2\me|Y^k(t_{k})-Y^{k,\pi}(t_{k})|^2+(2+32L^2|\pi|)|\pi|\sum_{j=k}^{N-1}\big\{K|\pi|
          +\me|Y^j(t_{j+1})-Y^{j,\pi}(t_{j+1})|^2\big\}\\
        \leq & K|\pi|.
\end{aligned}
\end{equation}

\ms

\textbf{Step 3.}
For any $k=0,1,\cdots,N-1$, and $k\leq j\leq N-1$, denote
$$\bar{Z}^k(t_j)=\frac{1}{\D_j}\me\Big(\int_{t_j}^{t_{j+1}}Z^k(s)ds|\mf_{t_j}\Big).$$
Then, by Lemma \ref{regzka}, Lemma \ref{regzkb} and Lemma \ref{conditional} and \eqref{zkj2}, 
a direct calculation leads to
\begin{equation*}\label{zkj3}
\begin{aligned}
    &\me\int_{t_k}^T|Z^k(s)-Z^{k,\pi}(\t(s))|^2ds=\sum_{j=k}^{N-1}\me\int_{t_j}^{t_{j+1}}|Z^k(s)-Z^{k,\pi}(\t(s))|^2ds\\
\leq &2\sum_{j=k}^{N-1}\me\int_{t_j}^{t_{j+1}}\big(|Z^k(s)-\bar{Z}^k(t_j)|^2+|\bar{Z}^k(t_j)-Z^{k,\pi}(t_j)|^2\big)ds\\
\leq &2\sum_{j=k}^{N-1}\me\int_{t_j}^{t_{j+1}}\Big(|Z^k(s)-Z^k(t_j)|^2+\Big|\frac{1}{\D_j}\me\Big(\int_{t_j}^{t_{j+1}}Z^k(u)-\widehat Z^{k,\pi}(u)du|\mf_{t_j}\Big)\Big|^2\Big)ds\\
\leq & K|\pi|+2\sum_{j=k}^{N-1}\me\int_{t_j}^{t_{j+1}}|Z^k(u)-\widehat Z^{k,\pi}(u)|^2du\\
\leq & K|\pi|.
\end{aligned}
\end{equation*}
That completes the proof.
\end{proof}

\ms

Now, we are in the step to prove Theorem \ref{convergence}.

\begin{proof}[\bf {Proof of Theorem~\ref{convergence}}]

By Lemma \ref{wang1}, we can see that $\ds \sup_{0\leq t\leq T}\me|Y(\t(t))-Y^{\pi(t),\pi}(\t(t))|^2\leq K|\pi|$ is true.

For the second term, $\ds\me\int_0^T\int_t^T|Z(t,s)-Z^{\pi(t),\pi}(\t(s))|^2ds$, on the left side of \eqref{conver},
It is easy to check that
\begin{equation}\label{conz1}
\begin{aligned}
&\me\int_0^T\int_t^T|Z(t,s)-Z^{\pi(t),\pi}(\t(s))|^2 dsdt\\
\leq &\me\int_0^T\int_t^T|(Z(t,s)-Z(\t(t),s))+(Z(\t(t),s)-\widehat Z^{\pi(t),\pi}(s))\\
       &\qq\qq\qq+(\widehat Z^{\pi(t),\pi}(s)-Z^{\pi(t),\pi}(\t(s)))|^2 dsdt\\
\eal
\ee
\begin{equation*}
\bal
\leq &3\me\int_0^T\int_t^T|Z(t,s)-Z(\t(t),s)|^2 dsdt
          +3\me\int_0^T\int_{\t(t)}^T|Z(\t(t),s)-\widehat Z^{\pi(t),\pi}(s)|^2 dsdt\\
       &\qq\qq\qq+3\me\int_0^T\int_{\t(t)}^T|\widehat Z^{\pi(t),\pi}(s)-Z^{\pi(t),\pi}(\t(s))|^2 dsdt.\\
\end{aligned}
\end{equation*}
By Lemma \ref{estimateyz}, one has
\begin{equation}\label{conz2}
\begin{aligned}
\me\int_0^T\int_t^T|Z(t,s)-Z(\t(t),s)|^2 dsdt\leq C\int_0^T(t-\t(t))dt\leq C|\pi|.
\end{aligned}
\end{equation}
For the third term on the right side of \eqref{conz1}, by \eqref{zkj2} and \eqref{zkj3}, one has
\begin{equation}\label{conz3}
\begin{aligned}
    &\me\int_0^T\int_{\t(t)}^T|\widehat Z^{\pi(t),\pi}(s)-Z^{\pi(t),\pi}(\t(s))|^2 dsdt\\
\leq &2\me\int_0^T\int_{\t(t)}^T|\widehat Z^{\pi(t),\pi}(s)-Z^{\pi(t)}(s)|^2 dsdt+2\me\int_0^T\int_{\t(t)}^T|Z^{\pi(t)}(s)-Z^{\pi(t),\pi}(\t(s))|^2 dsdt\\
\leq & K|\pi|.
\end{aligned}
\end{equation}

Now, we estimate $\ds \me\int_0^T\int_{\t(t)}^T|Z(\t(t),s)-\widehat Z^{\pi(t),\pi}(s)|^2 dsdt$.
By Eq. \eqref{bsvie1} and \eqref{ass4}, for any $k=0,1,\cdots,N-1$, one can easily calculate 
\begin{equation}\label{conz4}
\begin{aligned}
    &\me|Y(t_k)-Y^{k,\pi}(t_k)|^2+\me\int_{t_k}^T|Z(t_k,s)-\widehat Z^{k,\pi}(s)|^2 ds\\
\leq & \me\bigg| \big(g(t_k,x(T))-g(t_k,x^\pi(T))\big)\\
       & +\sum_{l=k}^{N-1}\int_{t_l}^{t_{l+1}}\big(f(t_k,s,x(s),Y(s),Z(t_k,s))
            -f(t_k,t_l,x^\pi(t_l),Y^{l,\pi}(t_{l+1}),\widehat Z^{k,\pi}_0(t_l))\big)ds\bigg|^2\\
\leq & 2L^2\me|x(T)-x^\pi(T)|^2\\
       &+2N|\pi|\sum_{l=k}^{N-1}\me\int_{t_l}^{t_{l+1}}|f(t_k,s,x(s),Y(s),Z(t_k,s))
               -f(t_k,t_l,x^\pi(t_l),Y^{l,\pi}(t_{l+1}),\widehat Z^{k,\pi}_0(t_l))|^2ds\\
\end{aligned}
\end{equation}
\begin{equation*}
\begin{aligned}
\leq & K|\pi|+K\sum_{l=k}^{N-1}\me\int_{t_l}^{t_{l+1}}\Big(|s-t_l|+|x(s)-x(t_l)|^2+|x(t_l)-x^\pi(t_l)|^2\\
       &\qq+|Y(s)-Y^l(s)|^2+|Y^l(s)-Y^l(t_{l+1})|^2+|Y^l(t_{l+1})-Y^{l,\pi}(t_{l+1})|^2\\
       &\qq+|Z(t_k,s)-Z^k(s)|^2+|Z^k(s)-Z^{k,\pi}(t_{l})|^2\Big)ds\\
\leq & K|\pi|.
\end{aligned}
\end{equation*}
Here, we use Theorem \ref{yong1}, Lemma \ref{regyk}, Lemma \ref{wang1}, 
Proposition \ref{yyk} and \eqref{zkj3}.
Now, \eqref{conz1}, together with  \eqref{conz2}--\eqref{conz4}, yields that
\begin{equation}\label{conz5}
\begin{aligned}
\me\int_0^T\int_t^T|Z(t,s)-Z^{\pi(t),\pi}(\t(s))|^2 dsdt\leq K|\pi|.
\end{aligned}
\end{equation}
That completes the proof of the  convergent 
speed of the Euler method for BSVIE \eqref{bsvie1}.
\end{proof}

\section{A Numerical example}

In this section, we mainly present a numerical example. 
Consider the following BSVIE:
\begin{equation}\label{ex1}
\begin{aligned}
Y(t)=t \sin(W(1))+\int_t^1\frac t 2 \sin(W(s))ds-\int_t^1Z(t,s)dW(s),\,\,t\in [0,1],
\end{aligned}
\end{equation}
with $T=d=n=1$,
which admits a unique solution $\big(t\sin (W(t)),\, t\cos (W(s))\big)$.

In Figure 1, choosing $N=100$ (i.e. $|\pi|=0.01$), we simulate true solution $(Y(t),Z(t,s))$ 
(in red) and its approximation $(Y^{\pi(\cdot),\pi}(\cdot),Z^{\pi(\cdot),\pi}(\cdot))$ (in blue). 
 For the $Z$ part, we take three cases: $t=0.1,\, 0.2,\, 0.3$ and one sample path $\o\in\O$.

\begin{figure}\label{figs1}
 \begin{center}
  \includegraphics[width=0.4\textwidth,height=0.2\textheight]{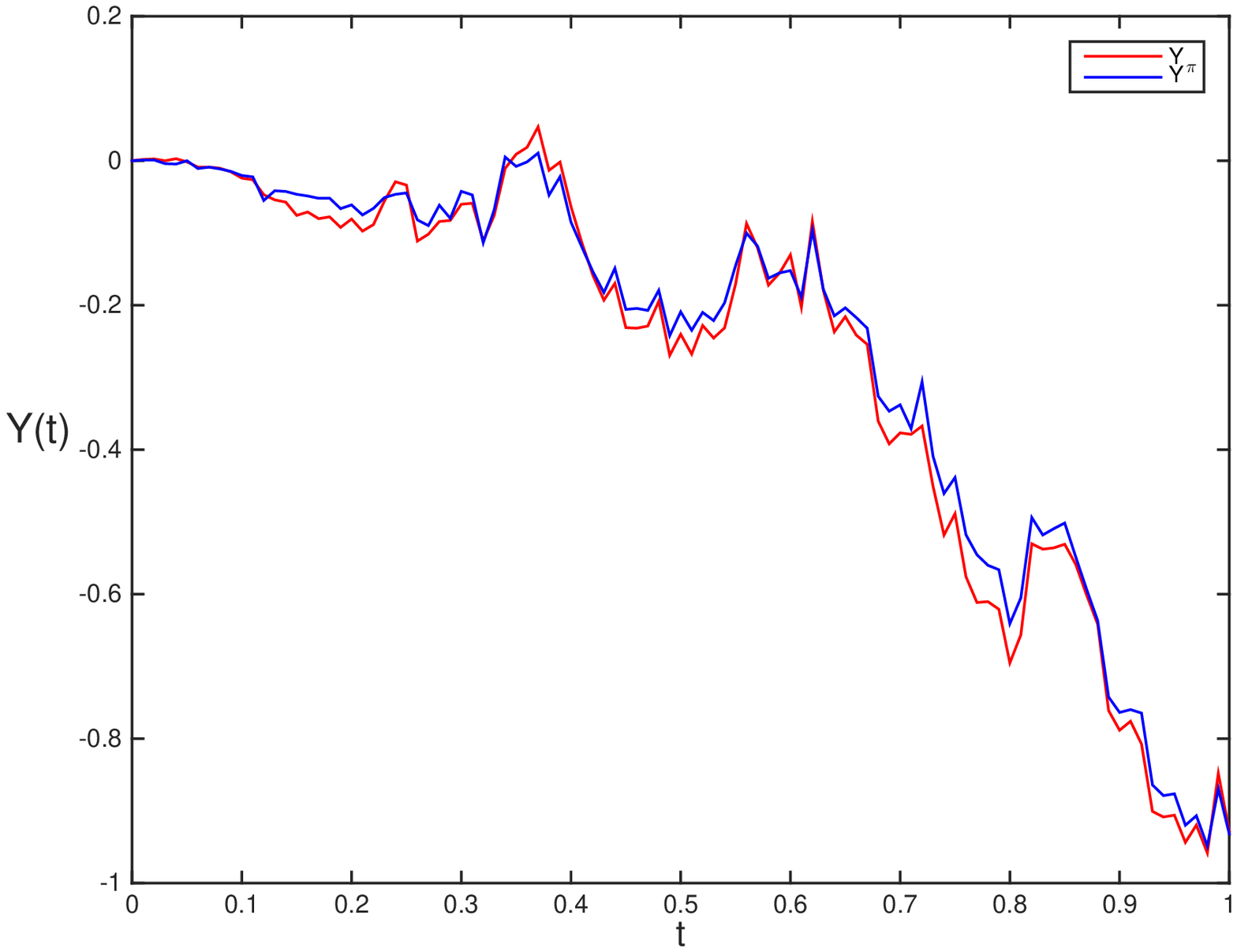}
\includegraphics[width=0.4\textwidth,height=0.2\textheight]{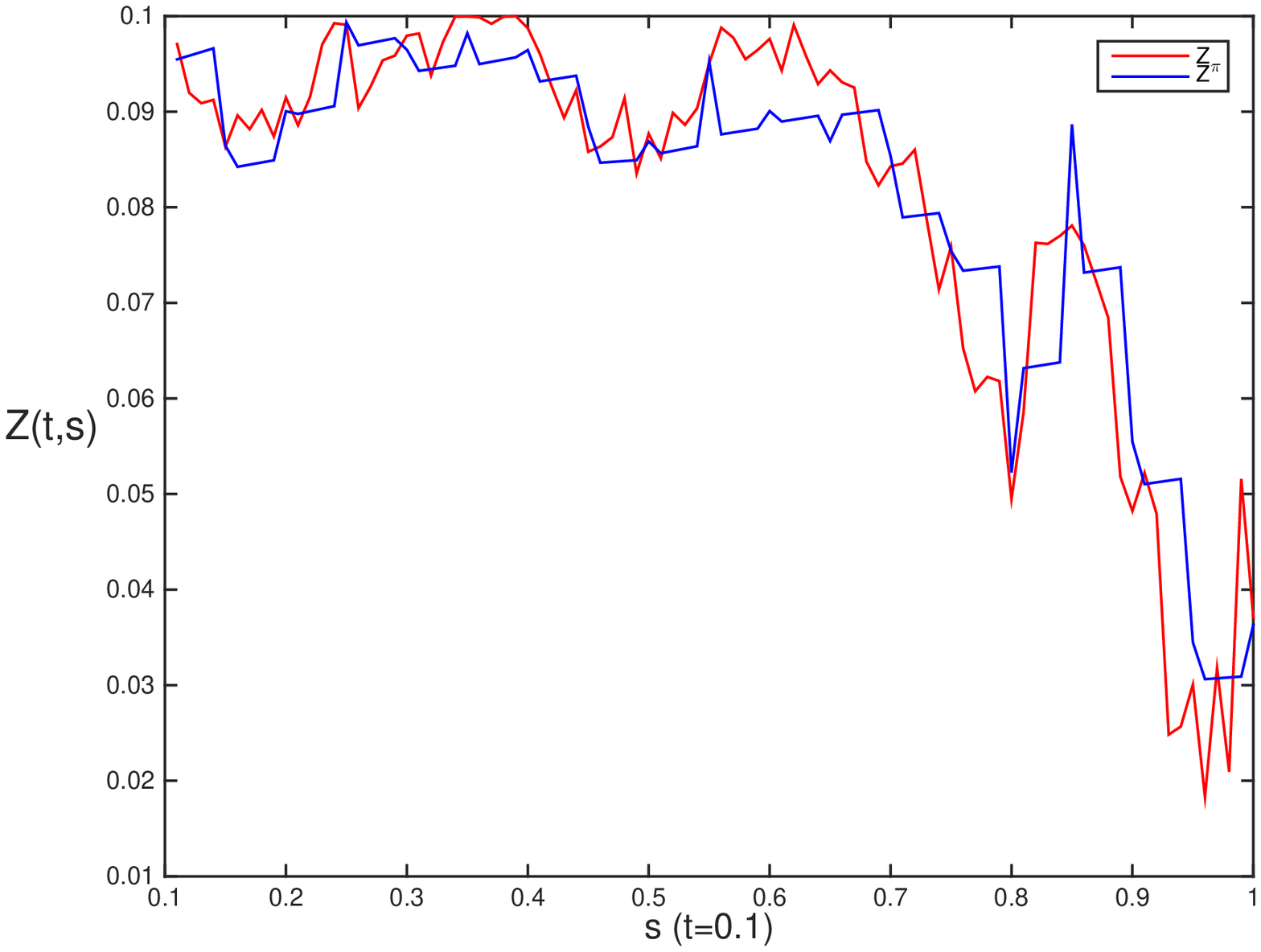}
 \includegraphics[width=0.4\textwidth,height=0.2\textheight]{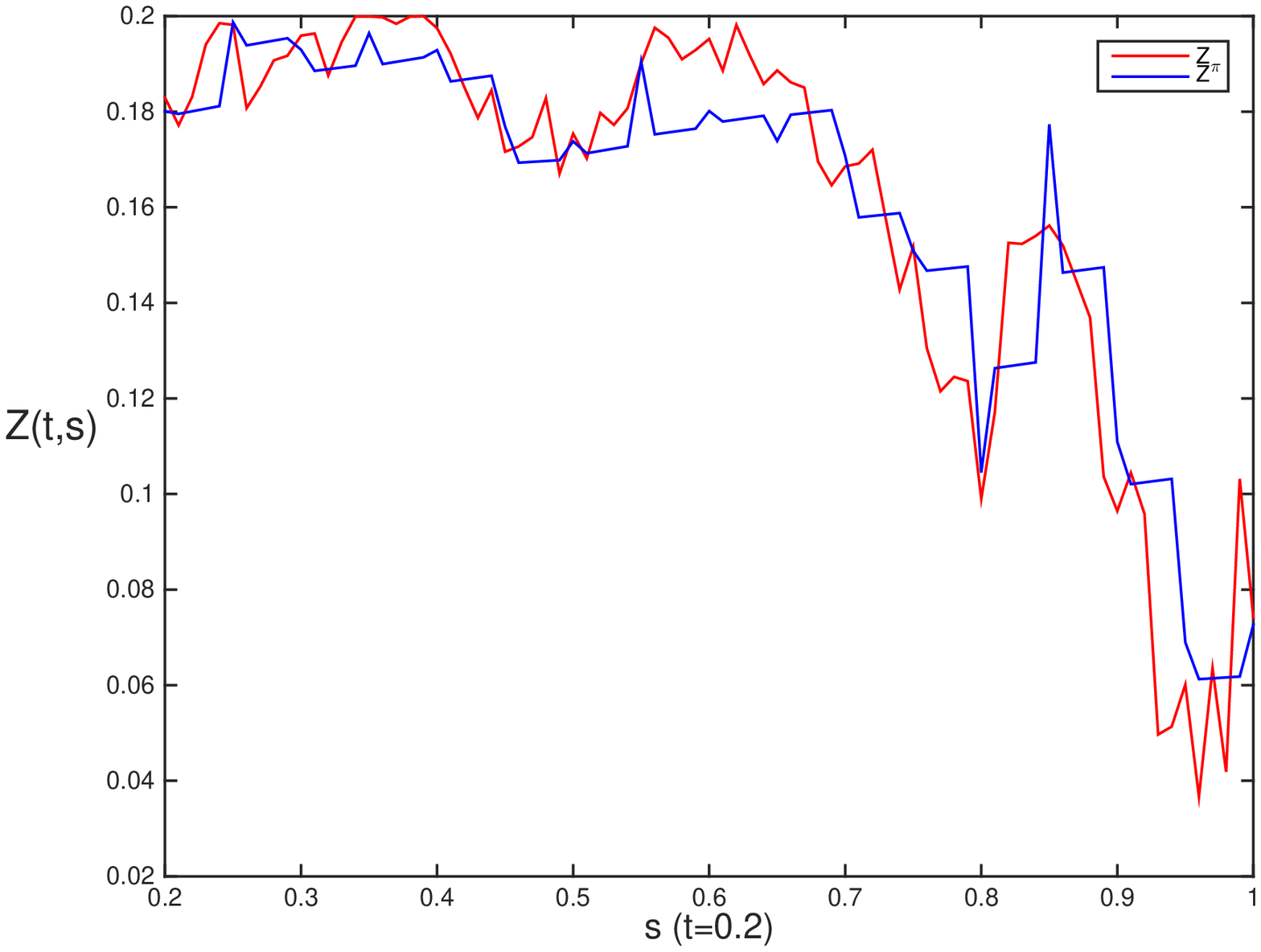}
\includegraphics[width=0.4\textwidth,height=0.2\textheight]{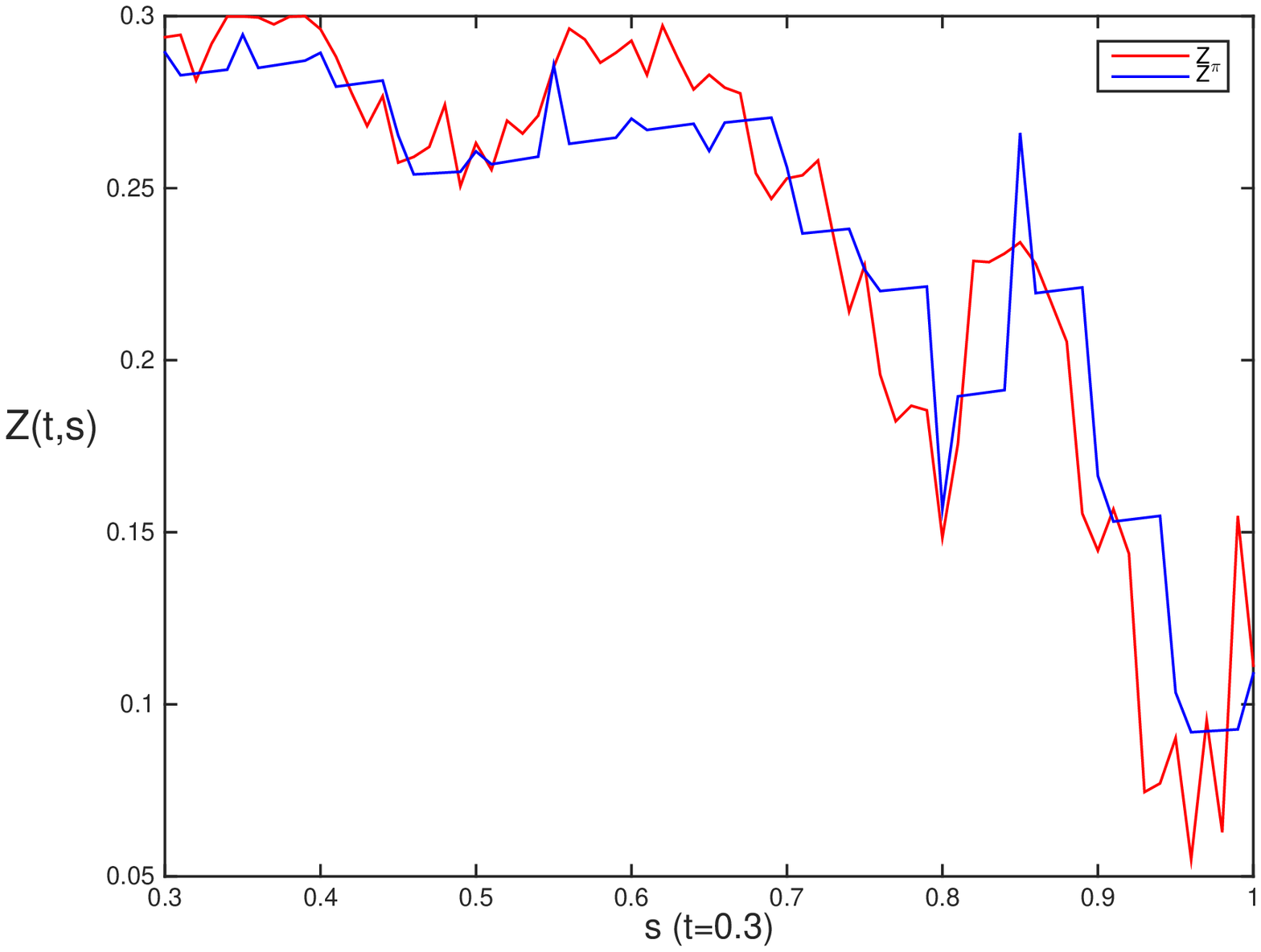}
 \caption{$(Y(t),Z(t,s))$ and its approximation $(Y^{\pi(t),\pi}(t),Z^{\pi(t),\pi}(\t(s)))$. 
 For  $Z(t,s)$ and its approximation $Z^{\pi(t),\pi}(\t(s))$, we choose 
 $\ds t=0.1,\, 0.2,\, 0.3$ for  one sample path $\o\in\O$.}
  \end{center}
  \end{figure}

\section*{Acknowledgement}

This work was carried out during the stay of the author at University of Central Florida, USA. The author
would like to thank the Department of Mathematics for its
hospitality, and the financial support from China Scholarship Council.  The author also gratefully acknowledges Professor Jiongmin Yong for stimulating discussions during this
work.


\end{document}